\numberwithin{equation}{section}
\newcommand{\R}{{\mathbb R}}
\newcommand{\RR}{{\mathbb R}}
\newcommand{\sm}{\setminus}
\newcommand{\cW}{\mathcal W} 
\newcommand{\cF}{\mathcal F} 
\newcommand{\bD}{\mathbb D} 
\newcommand{\bF}{\mathbb F} 
\newcommand{\ol}{\overline} 
\renewcommand{\d}{\partial}
\newcommand{\ms}{\medskip}
\newcommand{\diam}{{\rm diam}}
\newcommand{\dist}{{\rm dist}}
\newcommand{\po}{\partial\Omega}
\newtheorem{theorem}{Theorem}[section]
\newtheorem{lemma}[theorem]{Lemma}
\newtheorem*{claim*}{Claim}
\newtheorem{corollary}[theorem]{Corollary}
\theoremstyle{remark}
\newtheorem{remark}{Remark}
\theoremstyle{definition}
\newtheorem{example}{Example}
\newtheorem*{problem}{Problem}
\newtheorem{definition}{Definition}
\DeclareMathOperator{\Tr}{Tr}
\DeclareMathOperator{\diver}{div}
\DeclareMathOperator{\osc}{osc}
\begin{document}

	\title{Dimension and structure of the Robin Harmonic Measure on Rough Domains}
	\author{Guy David, Stefano Decio, Max Engelstein, Svitlana Mayboroda, Marco Michetti}
	\thanks{All the authors wish to thank Marcel Filoche for many stimulating conversations and providing background on the Robin problem. The authors also wish to thank Tom\'as Merch\'an for extensive notes on the regularity theorems proven in Sections \ref{s:neumannosc} and \ref{sec:density}. Finally, we thank Jill Pipher for the idea of using the representation formula for a simpler proof of absolute continuity of harmonic measure with respect to the Hausdorff measure -- see Section 7.
	G.D. and M.M. were partially supported by the Simons Foundation grant 601941, GD. 
	S.M. and S.D. were supported in part by
 the Simons foundation grant 563916, SM. SM was in addition supported by the NSF grant DMS 1839077.
	M.E. was partially supported by NSF DMS CAREER 2143719.}
	\subjclass[2020]{}
	\keywords{} 
	\address{GD: Universit\'e Paris-Saclay, CNRS, Laboratoire de math\'ematiques d'Orsay,
		91405 Orsay, France}
  \address{SD: Department of Mathematics, ETH Z\"urich, 8092, Switzerland}
 \address{ME: School of Mathematics, University of Minnesota, Minneapolis, MN, 55455, USA.}
  \address{SM: Department of Mathematics, ETH Z\"urich, 8092, Switzerland, and School of Mathematics, University of Minnesota, Minneapolis, MN, 55455, USA.}
  \address{MM: Dipartimento di Scienze di Base e Applicate per l’Ingegneria, Universit\`a di Roma “La
Sapienza”, 00161 Rome, Italy}
	
	\email{guy.david@universite-paris-saclay.fr}
	\email{stefano.decio@math.ethz.ch}
	\email{mengelst@umn.edu} \email{svitlana.mayboroda@math.ethz.ch; svitlana@umn.edu}
	\email{marco.michetti@uniroma1.it }
	
	\begin{abstract}
	The present paper establishes that the Robin harmonic measure is quantitatively mutually absolutely continuous with respect to the surface measure on any Ahlfors regular set in any (quantifiably) connected domain for any elliptic operator. This stands in contrast with analogous results for the Dirichlet boundary value problem and also contradicts the expectation, supported by simulations in the physics literature, that the dimension of the Robin harmonic measure in rough domains exhibits a phase transition as the boundary condition interpolates between completely reflecting and completely absorbing. 
		
	In the adopted traditional language, the corresponding harmonic measure exhibits no dimension drop, and the absolute continuity necessitates neither rectifiability of the boundary nor control of the oscillations of the coefficients of the equation. The expected phase transition is rather exhibited through the detailed non-scale-invariant weight estimates. 
	
	\end{abstract}

        \maketitle
	\tableofcontents
	
	\section{Introduction}\label{s:intro}
	
	The questions of the dimension and structure of the harmonic measure have spurred enormous amounts of activity in the past 20--30 years and led to some far-reaching breakthroughs. The first is still famously open, the second one has been resolved just recently. 
	
	The dimension of the harmonic measure is only understood on a plane. The fundamental results of Makarov \cite{Makarov1, Makarov2} and Jones and Wolff \cite{JonesWolff}, establish that for a planar domain $\Omega\subset \RR^2$ there is an at most one-dimensional subset of $E\subset \po$ which supports the harmonic measure, no matter what is the dimension of $\po$ itself. In $\RR^n$ with $n\geq 3$, the problem is still widely open. The situation must be different from $\RR^2$, for Wolff \cite{Wolff} has constructed a set whose harmonic measure is supported on a portion of the boundary of dimension strictly bigger than $n-1$. The only general positive result, due to Bourgain, merely assures that the dimension of the harmonic measure is smaller than the ambient one \cite{Bourgain}, for instance, smaller than $3-10^{-15}$ in $\RR^3$ \cite{Badger1}. In the presence of extra assumptions on the boundary of the domain, we know a little more, and in general expect a so-called dimension drop, $\dim \omega<\dim \po$ when $\dim \po \neq n-1$, see, e.g. \cite{Volberg1, Volberg2, Azzamdrop, Carleson, Batakis, Tolsadrop}, although even this is challenged by recent counterexamples \cite{NoDrop}.
	
	The question of the structure of the harmonic measure, or rather the structure of the set supporting the measure, is better understood. Starting from the 1916 F.\&M. Riesz theorem on a plane \cite{RR}, the developments of harmonic analysis have brought higher dimensional analogues, the first converse \cite{seven}, and finally, the full geometric description \cite{AHMMT}. Ignoring connectivity issues, harmonic measure is quantifiably absolutely continuous with respect to the Hausdorff measure if and only if the set is uniformly rectifiable. 
		
Classical harmonic measure corresponds to the Dirichlet boundary condition, $u={\bf 1}_E$ on $\partial\Omega$, $E\subset \partial\Omega$. Motivated, in part,  by conjectures in the physics literature and, in part, by a potentially new limiting approach to classical open problems in mathematics, we turned our attention to the Robin boundary condition, $\frac{1}{a}A\nabla u\cdot \nu+u={\bf 1}_E$. Formally speaking, the limit $a \uparrow \infty$ is Dirichlet and the limit  $a\downarrow 0$ is Neumann, and hence one could expect a transition between the Dirichlet ``dimension drop" of the harmonic measure and the Neumann case, when morally the harmonic measure is equal to the surface measure of the boundary. This phase transition was also suggested to us by physicists. 

This paper proves that, quite to the contrary, for any finite $a>0$ both the dimension and the structure of the Robin harmonic measure are dramatically different from the Dirichlet case. There is no dimension drop and rectifiability plays no role: the Robin harmonic measure is absolutely continuous with respect to the Hausdorff measure, for all reasonably accessible sets, of however large dimension. Moreover, again contrary to the Dirichlet scenario, this result holds for all elliptic operators, without any control on the oscillations of the coefficients. The expected phase transition, however, does happen, but in a very different sense: it is seen in the scaling behavior of the Radon-Nikodym derivative of the Robin harmonic measure, which, also contrary to the classical scenario, is not scale invariant in the present circumstances. 

Let us discuss the details. 
	
 Recall that $u\in C^2(\Omega)\cap C^1(\overline{\Omega})$ is a (classical) solution to the Robin problem with data $f\in C(\partial \Omega)$ if \begin{align}
		\label{problem}
		\begin{cases}
			-\mathrm{div}\left(A\nabla u\right)=0 &\text{in} \ \Omega,\\
			\frac{1}{a}A\nabla u\cdot \nu+u=f &\text{on} \ \partial \Omega,
		\end{cases}
	\end{align}
	where $\nu$ is the outward unit normal vector to $\partial \Omega$, $a\geq 0$ is a real number and $A$ is a uniformly elliptic, not necessary symmetric, real matrix valued function. 
These boundary value problems have been treated mathematically from a variety of perspectives, e.g. \cite{BBC, LS, BNNT, kriv}, starting with the foundational work of Fourier.  In fact, the Robin boundary condition is occasionally referred to as the Fourier boundary condition as the invention of the Fourier series for the problem of the heated rod in mathematical terms corresponded to the requirement that the flux through the end points is proportional to the difference of the interior and exterior temperature, thus, the Robin-type boundary data.  Needless to say, Robin-type problems are ubiquitous in physics \cite{GFS, GFS2} and biology \cite{bioref}. Parenthetically, the corresponding eigenvalue problem has also been intensively studied, in physics and in mathematics, e.g., in \cite{B86, D06, BG15, BFK17}

	As the parameter $a \uparrow \infty$ or $a\downarrow 0$, \eqref{problem} recovers the classical Dirichlet or Neumann boundary problems respectively. The well-posedness of the Dirichlet problem,
	\begin{align}
		\label{dproblem}
		\begin{cases}
			-\mathrm{div}\left(A\nabla u\right)=0 &\text{in} \ \Omega,\\
			u=f &\text{on} \ \partial \Omega,
		\end{cases}
	\end{align}
	for data $f\in L^p(\partial \Omega)$,
	can be rephrased in terms of the relationship between the associated elliptic measure $\omega_L$, given by the Riesz Representation theorem, and the surface measure 
(in the most studied case when $\d\Omega$ is $(n-1)$-dimensional, $\mathcal H^{n-1}|_{\partial \Omega}$). As we pointed out above, how this relationship depends on the smoothness of $A$ and the geometry of $\Omega$ has been the focus of intense study for over 100 years.  We cannot adequately cover the literature here, but point the reader to the surveys \cite{JillSurvey, TatianaSurvey}. 
	
	In particular, for the Laplacian ($A \equiv I$), it was recently shown in \cite{AHMMT} that 
a combination of quantitative rectifiability of $\partial \Omega$ and quantitative connectedness of $\Omega$ is equivalent to the quantitative mutual absolute continuity of 
$\omega_{-\Delta}$ and $\mathcal H^{n-1}|_{\partial \Omega}$. A slightly weakened version of this result also holds if $A$ is not the identity but rather a Carleson-type perturbation of the identity, see \cite{KP, HMMTZ} and also \cite{HMM, AGMT}. The Carleson-type restrictions on the oscillations of the coefficients of $A$ are sharp and necessary, as there are well known examples showing that $\omega_L$ and $\mathcal H^{n-1}|_{\partial \Omega}$ can be mutually singular even in the ball (see \cite{MM, CFK}) for general symmetric elliptic  $A$, and examples showing that $\omega_L$ can be comparable to $\mathcal H^{n-1}|_{\partial \Omega}$ in the complement of a purely unrectifiable  Cantor set \cite{DM}. The situation for non-symmetric $A$ can be even more complex, see, e.g. \cite{KKPT}.
	
In this paper, we first develop the elliptic theory necessary to construct 
the Robin elliptic measure, $\omega_{L,R}$, in domains with low regularity, 
in particular, 
one sided NTA domains with an Ahlfors regular boundary 
of co-dimension strictly between $0$ and $2$. 
Then, surprisingly, and in contrast with the theory for the Dirichlet problem outlined above, we show that for arbitrary real-valued, elliptic $A$, and for every domain as above, this Robin measure is quantitatively mutually absolutely continuous with respect to surface measure. The dimension of the underlying boundary, rectifiability, and control of the oscillations of $A$, play no role.  We immediately work in full generality, but let us underline that our results are new even when $\partial\Omega$ has dimension $n-1$ and the underlying operator is the Laplacian. 

At this point, let us turn to the detailed discussion of our geometric assumptions.

\subsection{Geometric assumptions}

We first introduce some definitions. Throughout this document our domains $\Omega \subset \mathbb R^n$ will satisfy the following quantitative connectedness  assumptions 
	
	\begin{enumerate}[label = (C\arabic*)]
		\item \label{CC1} Interior corkscrew condition: there exists a $M > 1$ such that for every $Q \in \partial \Omega$ and $0 < r < \mathrm{diam}(\Omega)/10$,
 there exists an $A_r(Q) \in \Omega \cap B(Q,r)$ such that $\mathrm{dist}(A_{r}(Q), \partial \Omega) \geq M^{-1}r$. 
		\item \label{CC2} Interior Harnack chains: there exists a constant $M > 1$ such that for all $0 < r < \mathrm{diam}(\Omega)/10$, 
$\varepsilon > 0$, $k \geq 1$, 
 and all $x, y\in \Omega$ with $\mathrm{dist}(x,y)=r$ and $\mathrm{dist}(x,\partial \Omega)\geq \epsilon$, $\mathrm{dist}(y,\partial \Omega) \geq \epsilon$ with $2^k \epsilon \geq r$ there exists $Mk$ balls $B_1,..., B_{Mk} \subset \Omega$ with centers $x_1,\ldots, x_{Mk}$ such that $x_1 = x, x_{Mk} = y$, 
 $r(B_i) \leq \frac{1}{10}\mathrm{dist}(x_i, \partial \Omega)$ for all $i$, 
 and $x_{i+1} \in B_i$ for $i < Mk$. 
\end{enumerate}

 In addition, we will add a metric condition on the size of $\partial\Omega$ or rather the measure $\sigma$ supported by $\po$. 
  \begin{definition} \label{d:mixed}
 We will say that the pair $(\Omega,\sigma)$ is a {\bf (one-sided NTA) pair of mixed dimension}
 when $\Omega$ satisfies \ref{CC1} and \ref{CC2}, $\sigma$ is a doubling measure 
 whose support is $\d\Omega$, and in addition there are constants $d > n-2$ and $c_d > 0$ such that
 \begin{equation} \label{1n3}
\sigma(B(Q,r)) \geq c_d (r/s)^{d}\sigma(B(Q,s))
\ \text{ for $Q \in \d\Omega$ and } 0 < s < r < \diam(\Omega).
\end{equation}
Recall that doubling means that there is a constant $C \geq 0$ such that 
\begin{equation} \label{1n4}
\sigma(B(Q,2r)) \leq C \sigma(B(Q,r))
\ \text{ for $Q \in \d\Omega$ and } 0 < r < \diam(\Omega).
\end{equation}
 \end{definition}
 
 For a pair of mixed dimension, we will refer to the constants $d$, $C$ and $c_d$
in the conditions above as the geometric constants of $\Omega$. 

Clearly, domains which satisfy the above conditions include smooth domains (where the boundary is locally given by the graph of a $C^1$ function) and also Lipschitz domains and chord arc domains. 
Much more generally, a particular case of $\sigma$ and $\partial\Omega$ satisfying the conditions above is provided by the Ahlfors regular sets of dimension $n-2<d<n$. We will single out the corresponding assumption as 
  \begin{enumerate}[resume, label = (C\arabic*)]
 		\item  \label{CC3} 
		$\partial \Omega$ is a $d-$Ahlfors regular set, where $n-2<d<n$: there exists a Borel regular measure, $\sigma$, supported on $\partial \Omega$ and a $C > 1$ such that for every $Q\in \partial \Omega$ and $r < \mathrm{diam}(\partial \Omega)/10$, 
		$$C^{-1}r^{d} \leq \sigma(B(Q,r)\cap \partial \Omega)  \leq Cr^{d}.$$
	\end{enumerate}
	
	We note that (it is classical and follows from a covering argument that) 
if $\partial \Omega$ supports {\it a} $d$-Ahlfors regular measure, $\sigma$, then the $d$-dimensional Hausdorff measure $\mathcal H^{d}|_{\partial \Omega}$ is also $d$-Ahlfors regular 
(i.e., satisfies the same condition as $\sigma$ above), with a constant that depends on $C> 0$ above and the dimension, and then $\sigma = h \mathcal H^{d}|_{\partial \Omega}$ for some $h$ such that
$C^{-1} \leq h \leq C$. Our results are still new and of interest if one takes $\sigma \equiv \mathcal H^{d}|_{\partial \Omega}$ throughout, but we emphasize that $\sigma$ can be an arbitrary 
$d$-Ahlfors regular measure. 
	This is  convenient, because replacing $\sigma$ with  an equivalent measure
$h d\sigma$ amounts to multiplying $a$ in \eqref{problem} by $h$;
see  the weak  formulation \eqref{weakform} below, where we now  integrate with respect
to $h d\sigma$ instead of $d\sigma$.
Thus all the theorems here apply (with suitable modifications to the statements) to variable $a$ (as long as $a, a^{-1} \in L^\infty(d\sigma$)).
	
 We will call a domain that satisfies \ref{CC1}, \ref{CC2} and \ref{CC3} a one-sided NTA domain with $d$-Ahlfors regular boundary, and in that case, the constants in \ref{CC1}, \ref{CC2} and \ref{CC3} become the geometric constants of $\Omega$. 
 
 However, for various reasons that will be detailed in Remark \ref{rk1}, we did not want to be restricted to Ahlfors regular sets.  In particular, with an eye on future free boundary results and underlying applications in physics, we want to be able to treat the domains exhibiting different behavior at different scales, such as pre-fractals, which are $n-1$ dimensional microscopically and $d$-dimensional macroscopically, $d$ being the dimension of the ultimate fractal.  Additionally, the proofs here occasionally require us to``hide" bad parts of the boundary via the construction of so-called sawtooth regions, which are of mixed dimension when $\partial \Omega$ is $d$-Ahlfors regular with $d\neq n-1$.

We will assume our domain  $\Omega$ to be bounded to avoid complications, but we 
emphasize that the diameter of $\Omega$ is not a ``geometric constant" in this sense and we will always try to make explicit the dependence of our bounds on $\mathrm{diam}(\Omega)$ (should there be any).

Of course there are many domains satisfying the properties above which do not have 
rectifiable boundaries:
	
	\begin{example}\label{ex:roughdomain}[A rough domain satisfying the above properties] 
		A guiding example of a rough domain covered by our theory is the complement of the four corner Cantor set in the plane.  Such a Cantor set is built starting from a square and erasing everything but the four corners of sidelength four times smaller than the previous generation. To get a bounded domain satisfying the conditions of our paper, take the complement of the four corner Cantor set in a large ball (with carefully chosen radius so the boundary of the ball does not intersect with the fractal). Note that this example is 
$1$-Ahlfors regular in the plane; it would be easy to construct similar $(n-1)$-Ahlfors regular
sets in $\R^n$ (or even of any dimension $d\in (n-2,n)$), but this one is the most striking counterexample.

		\begin{figure}[H]
			\centering
			\begin{tikzpicture}
				\draw[blue, fill=blue!50](0cm, 4cm) circle [radius=17mm];
				\node (K1) at (0,2) {$C_0$};
				\draw[blue, fill=blue!50](4cm, 4cm) circle [radius=17mm];
				\node (K2) at (4,2) {$C_1$};
				\draw[blue, fill=blue!50](8cm, 4cm) circle [radius=17mm];
				\node (K2) at (8,2) {$C_2$};
				\node [draw, blue, fill=white, shape=rectangle, minimum width=2cm, minimum height=2cm, anchor=center] at (0,4) {};
				\node [draw, blue, fill=white, shape=rectangle, minimum width=0.666cm, minimum height=0.666cm, anchor=center] at (4+1-0.333, 4+1-0.333) {};
				\node [draw, blue, fill=white, shape=rectangle, minimum width=0.666cm, minimum height=0.666cm, anchor=center] at (4-1+0.333, 4-1+0.333) {};
				\node [draw, blue, fill=white, shape=rectangle, minimum width=0.666cm, minimum height=0.666cm, anchor=center] at (4+1-0.333, 4-1+0.333) {};
				\node [draw, blue, fill=white, shape=rectangle, minimum width=0.666cm, minimum height=0.666cm, anchor=center] at (4-1+0.333, 4+1-0.333) {};
				
				\node [draw, blue, fill=white, shape=rectangle, minimum width=0.222cm, minimum height=0.222cm, anchor=center] at (8+1-0.333+0.333-0.111, 4+1-0.333+0.333-0.111) {};
				\node [draw, blue, fill=white, shape=rectangle, minimum width=0.222cm, minimum height=0.222cm, anchor=center] at (8+1-0.333-0.333+0.111, 4+1-0.333+0.333-0.111) {};
				\node [draw, blue, fill=white, shape=rectangle, minimum width=0.222cm, minimum height=0.222cm, anchor=center] at (8+1-0.333+0.333-0.111, 4+1-0.333-0.333+0.111) {};
				\node [draw, blue, fill=white, shape=rectangle, minimum width=0.222cm, minimum height=0.222cm, anchor=center] at (8+1-0.333-0.333+0.111, 4+1-0.333-0.333+0.111) {};

				\node [draw, blue, fill=white, shape=rectangle, minimum width=0.222cm, minimum height=0.222cm, anchor=center] at (8-1+0.333+0.333-0.111, 4-1+0.333+0.333-0.111) {};
				\node [draw, blue, fill=white, shape=rectangle, minimum width=0.222cm, minimum height=0.222cm, anchor=center] at (8-1+0.333-0.333+0.111, 4-1+0.333+0.333-0.111) {};
				\node [draw, blue, fill=white, shape=rectangle, minimum width=0.222cm, minimum height=0.222cm, anchor=center] at (8-1+0.333+0.333-0.111, 4-1+0.333-0.333+0.111) {};
				\node [draw, blue, fill=white, shape=rectangle, minimum width=0.222cm, minimum height=0.222cm, anchor=center] at (8-1+0.333-0.333+0.111, 4-1+0.333-0.333+0.111) {};
				
				\node [draw, blue, fill=white, shape=rectangle, minimum width=0.222cm, minimum height=0.222cm, anchor=center] at (8+1-0.333+0.333-0.111, 4-1+0.333+0.333-0.111) {};
				\node [draw, blue, fill=white, shape=rectangle, minimum width=0.222cm, minimum height=0.222cm, anchor=center] at (8+1-0.333-0.333+0.111, 4-1+0.333+0.333-0.111) {};
				\node [draw, blue, fill=white, shape=rectangle, minimum width=0.222cm, minimum height=0.222cm, anchor=center] at (8+1-0.333+0.333-0.111, 4-1+0.333-0.333+0.111) {};
				\node [draw, blue, fill=white, shape=rectangle, minimum width=0.222cm, minimum height=0.222cm, anchor=center] at (8+1-0.333-0.333+0.111, 4-1+0.333-0.333+0.111) {};
				
				\node [draw, blue, fill=white, shape=rectangle, minimum width=0.222cm, minimum height=0.222cm, anchor=center] at (8-1+0.333+0.333-0.111, 4+1-0.333+0.333-0.111) {};
				\node [draw, blue, fill=white, shape=rectangle, minimum width=0.222cm, minimum height=0.222cm, anchor=center] at (8-1+0.333-0.333+0.111, 4+1-0.333+0.333-0.111) {};
				\node [draw, blue, fill=white, shape=rectangle, minimum width=0.222cm, minimum height=0.222cm, anchor=center] at (8-1+0.333+0.333-0.111, 4+1-0.333-0.333+0.111) {};
				\node [draw, blue, fill=white, shape=rectangle, minimum width=0.222cm, minimum height=0.222cm, anchor=center] at (8-1+0.333-0.333+0.111, 4+1-0.333-0.333+0.111) {};
			\end{tikzpicture}
			\caption{First three steps of the construction of the complement of the four corner Cantor set}
		\end{figure}
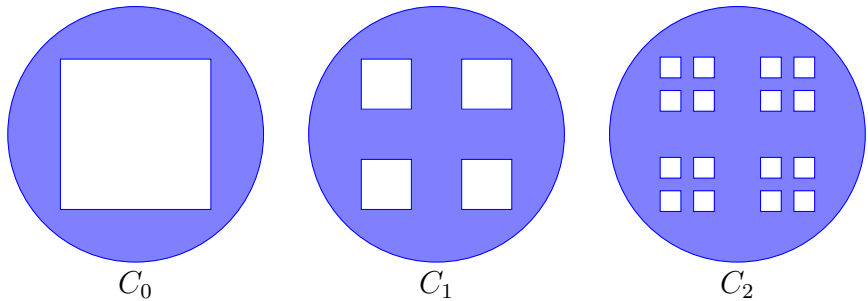
	\end{example}

  \begin{example}\label{ex:koch}[Another rough domain of co-dimension $\neq 1$] Another rough domain for which our results apply is the famous Koch snowflake, which is built starting from an equilateral triangle in $\mathbb{R}^2$, dividing each side into thirds, building another equilateral triangle with the middle third and iterating. The dimension of the boundary of the Koch snowflake is $\log(4)/\log(3)>1$.

\begin{figure}[H]
\centering{ 
  \begin{tikzpicture}
\draw [l-system={rule set={F -> F-F++F-F}, step=2pt, angle=60,
   axiom=F++F++F, order=4}] lindenmayer system -- cycle;
\end{tikzpicture}
\caption{First three steps of the construction of a Koch snowflake}
}
      \end{figure}
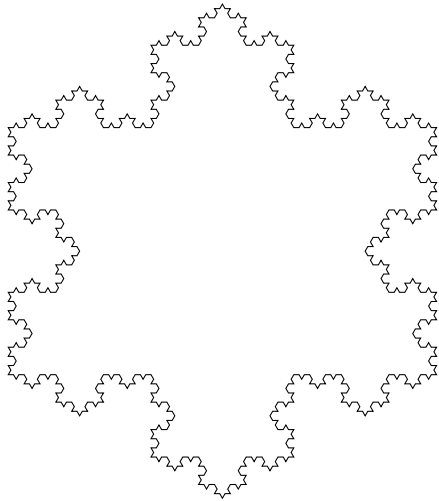
  \end{example}
  
  \subsection{Definition of the Robin elliptic measure and fundamental regularity problems of solutions to the Robin problem}
	
	In domains with this level of regularity we do not expect $\nabla u$ to extend continuously to the boundary. 
Thus we must define solutions of \eqref{problem} weakly. We will say that 
$u\in W^{1,2}(\Omega)$ (see \eqref{2a1} for definition) is a (weak) solution to  \eqref{problem} when 
	\begin{align}
		\label{weakform}
		\frac{1}{a}\int_{\Omega}A\nabla u\nabla \varphi+\int_{\partial \Omega}u\varphi=\int_{\partial \Omega} \varphi f , 
		\qquad \forall \varphi \in C_c^\infty(\mathbb R^n).
	\end{align}
	Here  and below, integrals on $\Omega$ are always with respect to Lebesgue measure and integrals on $\partial \Omega$ are always with respect to $\sigma$.
	
	Our first main results are the local H\"older continuity of weak solutions, up to the boundary,
	and then the existence of a solution to \eqref{weakform} and a representation formula:
	
	\begin{theorem}\label{t:contandhm}
		Let $(\Omega, \sigma)$ be a one-sided NTA pair of mixed dimension, with $\Omega$ bounded,
		and let $A$ be a uniformly elliptic real matrix valued function. 
		Let $u\in W^{1,2}(\Omega)$ satisfy \eqref{weakform} for some $f\in C^{0,\beta}(\partial \Omega)$. There exists an $\alpha \in (0,1)$, which depends only on $\beta$, 
the ellipticity constant  for  $A$, and 
the geometric constants of $\Omega$, such that $u\in C^{0,\alpha}(\overline{\Omega}).$
		
		Furthermore, there exists a family of Radon measures supported on the boundary $\{\omega_{L,\sigma, R}^X\}_{X\in \Omega}$ such that  $u$ solves \eqref{weakform} for some $f\in C(\partial \Omega)$ if and only if 
		$$u(X) = \int_{\partial \Omega} f(Q)d\omega_{L,\sigma, R}^X(Q)
		\qquad \text{ for } X \in \Omega. 
		$$
	\end{theorem}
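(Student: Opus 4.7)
The two statements are tackled in order: first global Hölder regularity on $\overline{\Omega}$ for Hölder data, then construction of the Robin elliptic measure and the representation formula via a Riesz duality argument. The natural framework is variational: the bilinear form
$$B(u,v) = \frac{1}{a}\int_\Omega A\nabla u \cdot \nabla v + \int_{\partial\Omega} uv\, d\sigma$$
is continuous on $W^{1,2}(\Omega)$ as soon as the trace inequality $\|u\|_{L^2(\sigma)} \lesssim \|u\|_{W^{1,2}(\Omega)}$ holds, which follows from the mixed-dimension hypothesis $d>n-2$ via a Whitney-type decomposition of $\Omega$ together with the corkscrew condition. A Poincaré-trace estimate $\|u\|_{L^2(\Omega)}^2 \lesssim \|\nabla u\|_{L^2(\Omega)}^2 + \|u\|_{L^2(\sigma)}^2$, derivable from the corkscrew and Harnack-chains conditions, gives coercivity, so Lax-Milgram supplies a unique weak solution for each $f\in L^2(\sigma)$.

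\textbf{Hölder continuity.} Interior Hölder regularity is classical (De Giorgi-Nash-Moser applied to $-\operatorname{div}(A\nabla u)=0$). For the boundary part, fix $Q\in\partial\Omega$, subtract the constant $f(Q)$ so that the new boundary datum $\tilde f$ satisfies $|\tilde f(P)|\leq [f]_{C^{0,\beta}}|P-Q|^\beta$, and test \eqref{weakform} with $((u-k)^+)\eta^2$ for a cutoff $\eta$ supported on $B(Q,2r)$. Rearranging yields a boundary Caccioppoli estimate of the form
$$\int_{B(Q,r)\cap\Omega}|\nabla(u-k)^+|^2 \leq \frac{C}{r^2}\int_{B(Q,2r)\cap\Omega}((u-k)^+)^2 + C\int_{B(Q,2r)\cap\partial\Omega}(\tilde f - k)^+(u-k)^+\,d\sigma,$$
where the boundary contribution is controlled by $r^{d+2\beta}[f]_{C^{0,\beta}}^2$ after the subtraction. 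Moser iteration from this Caccioppoli gives local $L^\infty$ bounds, and a De Giorgi-type measure-theoretic lemma, adapted so that the level-set measures account for both the Lebesgue measure of $\Omega\cap B$ and the $\sigma$-measure of $\partial\Omega\cap B$ (which is where $d>n-2$ enters via the corresponding Sobolev-trace embedding), produces an oscillation decay
$$\osc_{B(Q,r/2)\cap\overline{\Omega}} u \leq \theta\,\osc_{B(Q,2r)\cap\overline{\Omega}} u + Cr^\beta[f]_{C^{0,\beta}}$$
for some $\theta<1$. Dyadic iteration gives the Hölder exponent $\alpha$, depending only on $\beta$, ellipticity, and the geometric constants of $(\Omega,\sigma)$.

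\textbf{Construction of the Robin harmonic measure.} With Hölder regularity in hand for Hölder data, I would first establish the maximum principle $\|u\|_{L^\infty(\Omega)}\leq \|f\|_{L^\infty(\partial\Omega)}$ by testing \eqref{weakform} with $(u-\|f\|_\infty)^+$ and symmetrically. Given $f\in C(\partial\Omega)$, approximate by Hölder data $f_k\to f$ uniformly; the maximum principle shows the sequence of solutions $u_k$ is uniformly Cauchy, and its limit $u$ is continuous on $\overline{\Omega}$ and solves \eqref{weakform} with data $f$ (uniqueness from Lax-Milgram makes this well-defined and forces the limit to solve the weak equation). For each fixed $X\in\Omega$, the map $T_X:C(\partial\Omega)\to\mathbb R$, $T_X(f)=u(X)$, is linear, positive (testing with $-u^-$ gives $u\geq 0$ whenever $f\geq 0$), and bounded by $\|f\|_\infty$. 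The Riesz representation theorem then yields a Radon measure $\omega_{L,\sigma,R}^X$ on $\partial\Omega$ representing $T_X$, and the converse implication in the ``if and only if'' is immediate from uniqueness.

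\textbf{Main obstacle.} The technical heart is the boundary De Giorgi lemma in this mixed-dimension setting. One must simultaneously track Sobolev and trace embeddings that exchange Lebesgue measure in a Whitney neighborhood of the boundary against $\sigma$-measure on it, manage the ratio between the bulk energy (weighted by $1/a$) and the boundary energy, and ensure the Moser iteration closes at every scale with constants depending only on the geometric data. The inherently non-scale-invariant character of the Robin condition, which is precisely what drives the phase transition in the Radon-Nikodym derivative highlighted in the abstract, makes careful bookkeeping of the powers of $r$ essential, and this is the content of the oscillation estimate section the paper announces.
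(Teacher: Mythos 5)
The second half of your proposal (maximum principle, approximation of continuous data by H\"older data, uniform convergence, Riesz representation, and uniqueness for the ``only if'' direction) is sound and is essentially the argument the paper gives in Section \ref{s:RRT}. The gap is in the first half, at its central step: the oscillation decay
$\osc_{B(Q,r/2)\cap\overline{\Omega}} u \leq \theta\,\osc_{B(Q,2r)\cap\overline{\Omega}} u + Cr^\beta[f]_{C^{0,\beta}}$ with $\theta<1$ \emph{uniform in $a$ and in the scale $r$} is asserted as the output of ``a De Giorgi-type measure-theoretic lemma,'' but no mechanism is given that actually produces it, and the standard boundary De Giorgi scheme does not. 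In the regime $a r^{2-n}\sigma(B(Q,r))\ll 1$ the Robin term (note your Caccioppoli drops the factor $1/a$ on the gradient term, equivalently the factor $a$ on the boundary term, and the bound $r^{d+2\beta}$ tacitly assumes Ahlfors regularity rather than the mixed-dimension hypothesis) carries essentially no information: the boundary datum does not pin down $u$ on $\partial\Omega$ at that scale, so there is no analogue of the Dirichlet trick where truncation at a level $k\geq \sup f$ kills the boundary contribution and lets the boundary measure density drive the decay. The decay factor must instead come from a purely interior-measure density estimate that holds \emph{up to the boundary}, i.e., from a boundary Harnack inequality for the Neumann problem. This is precisely what the paper has to build from scratch in Section \ref{s:neumannosc}: Moser for Neumann subsolutions (Lemma \ref{lem:neumannMoser}), the corkscrew-maximum comparison Lemma \ref{l:CSmax} (proved by a nontrivial mass-growth contradiction argument, and explicitly flagged as different from the Dirichlet case), the log-transform density Lemma \ref{lem:neummdensity}, and Theorem \ref{th:neumannharnack}; the paper notes these estimates were not previously available at this generality, so they cannot be waved through as ``adapted'' classical theory.

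Moreover, even granting a Neumann-type Harnack, a single iteration scheme does not cover all scales: the paper's oscillation estimate (Theorem \ref{t:RobinOsc}) is proved by two different mechanisms, the Robin Harnack inequality (Theorem \ref{th:robinharnack}, itself requiring the tent-domain localization of Lemma \ref{l:tentspaces} and the comparison with an auxiliary mixed Neumann/Dirichlet solution) when $a r^{2-n}\sigma(B)\lesssim 1$, and the density Lemma \ref{lem:density}, which extracts smallness from the boundary term, when $a r^{2-n}\sigma(B)\gtrsim 1$. This two-regime analysis is exactly what makes the H\"older exponent $\alpha$ independent of $a$, which is the point of the statement; your sketch compresses it into one sentence, and that sentence is where the actual proof lives.
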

	
	We call the measures, $\omega_{L,\sigma, R}^X$, the Robin elliptic measures. We often omit the $L,\sigma, R$ from the notation if no confusion is possible. The key steps in constructing these measures is our work in Sections \ref{s:neumannosc} and \ref{sec:density}, where we show that weak solutions to the Robin problem with continuous data are continuous up to the boundary. This strategy follows the general approach of the construction of Dirichlet harmonic measure, but its execution differs substantially in many places. We discuss this in more detail in Subsection \ref{ss:comparison}, but mention here that these boundary estimates seem to be new even for the Neumann problem for the class of domains and operators we consider here.\footnote{While writing this paper we were informed by Steve Hofmann that in joint work with his student, Derek Sparrius, they have independently established a Harnack inequality up to the boundary for solutions to the Neumann problem (our Theorem \ref{th:neumannharnack}). We emphasize that these works were done 
independently and have different end aims (Hofmann-Sparrius is interested in the solvability of the Neumann problem in these domains, as in \cite{KP2}).}
	
	While a natural object, we are unaware of any reference which rigorously constructs Robin harmonic measure, even for the Laplacian in smooth domains. There is a very interesting heuristic construction of Robin harmonic measure in \cite{GFS} as the hitting measure of a partially reflected Brownian motion (the parameter $a$ corresponds to the ratio between the absorption probability and the reflection probability). However, making this construction rigorous here would require more delicate analysis of the ``local time" random variable. This seems to be extremely subtle, even in Lipschitz domains, see, e.g., \cite{BBC} which analyzes the local time variable to study the homogeneous Robin problem with an interior source. Interestingly, our methods (in particular the Harnack inequality up to the boundary) do partially address a problem studied in \cite{BBC} concerning domains for which the whole boundary is ``active" (see Section \ref{sec:density} for further discussion), but we should emphasize that our results in this direction neither imply nor are implied by \cite{BBC}.

\subsection{Absolute continuity of the Robin harmonic measure}	
	We now turn our attention to the behavior of the Robin elliptic measure, $\omega^X$. Here we find that the theory for the Robin problem deviates substantially from the corresponding Dirichlet theory:
	
	\begin{theorem}\label{t:mainac}[Absolute Continuity of the Robin Measure]
	Let $(\Omega, \sigma)$ be a one-sided NTA pair of mixed dimension, with $\Omega$ bounded,
	and let $A$ be a uniformly elliptic real matrix valued function. For any $X\in \Omega$ we have 
$$\sigma \ll \omega_{L, \sigma, R}^X \ll \sigma.$$
	\end{theorem}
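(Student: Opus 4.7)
The plan is to prove both inclusions simultaneously by producing an explicit Radon--Nikodym density for $\omega^X := \omega^X_{L,\sigma,R}$ with respect to $\sigma$, following the approach indicated by the Pipher acknowledgment. Concretely, I expect to show that
\begin{equation*}
d\omega^X(Q) = a\, G^*(X, Q)\, d\sigma(Q),
\end{equation*}
where $G^*(X, \cdot)$ is the Robin Green function at pole $X$ for the adjoint operator $L^* = -\mathrm{div}(A^T\nabla)$ with homogeneous Robin boundary condition; i.e., the weak solution characterized by
\begin{equation*}
\frac{1}{a}\int_\Omega A^T \nabla_Y G^*(X, Y)\cdot \nabla\varphi(Y)\, dY + \int_{\partial\Omega} G^*(X, Q)\varphi(Q)\, d\sigma(Q) = \varphi(X)
\end{equation*}
for every $\varphi \in C_c^\infty(\mathbb R^n)$. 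Existence of $G^*$ will follow by Lax--Milgram once coercivity of the bilinear form $(u,\varphi) \mapsto \frac{1}{a}\int A\nabla u \cdot \nabla \varphi + \int u\varphi\, d\sigma$ is established on $W^{1,2}(\Omega)$ equipped with the natural norm $\|\nabla \cdot \|_{L^2(\Omega)} + \|\cdot\|_{L^2(\partial\Omega, \sigma)}$ (using a Poincar\'e--trace inequality adapted to our mixed-dimension pair), and once $\varphi \mapsto \varphi(X)$ is shown to be a bounded functional, which combines the interior Moser estimate with the boundary $C^{0,\alpha}$ estimate from Theorem \ref{t:contandhm}.

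To extract the representation formula, I would test the weak equation for the inhomogeneous solution $u$ (with continuous data $f$) against a cutoff $\eta_\varepsilon G^*(X, \cdot)$, and the weak equation for $G^*(X, \cdot)$ against $u$, where $\eta_\varepsilon$ is a smooth cutoff vanishing on $B(X, \varepsilon)$. The identity $A^T\nabla G^*\cdot \nabla u = A\nabla u\cdot \nabla G^*$ makes the interior terms agree, the two occurrences of $\int u G^*\, d\sigma$ cancel, and the limit $\varepsilon \to 0$ (justified by standard local integrability of $G^*(X, \cdot)$ and of its gradient near the pole, available thanks to $d > n-2$) yields
\begin{equation*}
u(X) = a\int_{\partial\Omega} f(Q)\, G^*(X, Q)\, d\sigma(Q).
\end{equation*}
Comparing with the representation in Theorem \ref{t:contandhm} identifies $a\, G^*(X, \cdot)$ as the $\sigma$-density of $\omega^X$, immediately giving $\omega^X \ll \sigma$.

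For the reverse $\sigma \ll \omega^X$, I need $G^*(X, Q) > 0$ for $\sigma$-a.e.\ $Q \in \partial\Omega$. Non-negativity follows from a weak maximum principle: testing the defining identity of $G^*$ against the negative part $(G^*)_-$ (which vanishes near the pole, so the $\delta_X$ term drops out), the ellipticity of $A^T$ and the non-negativity of the boundary term $\int (G^*)_-^2\, d\sigma$ force $(G^*)_- \equiv 0$. Strict positivity in $\Omega$ then propagates via the interior Harnack chain condition \ref{CC2}, and strict positivity on $\partial\Omega$ follows from the Harnack inequality up to the boundary developed in Sections \ref{s:neumannosc} and \ref{sec:density}; combined with continuity of $G^*(X, \cdot)$ on $\partial\Omega$ (applying Theorem \ref{t:contandhm} to $G^*(X, \cdot)$ restricted to $\Omega\setminus B(X, \varepsilon)$, where the data is the constant $0$), this yields $G^*(X, Q) > 0$ at every $Q \in \partial\Omega$.

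The main obstacle will be the rigorous justification of the cutoff/limit argument in the representation formula step. Unlike the Dirichlet Green-function framework, both the volume integral and the boundary integral involve $G^*$, so the approximation must control simultaneously interior integrability near the pole and the boundary trace. The mixed-dimension hypothesis \eqref{1n3} (forcing $d > n-2$, so that $\partial\Omega$ has codimension strictly less than $2$) is precisely what guarantees $G^*(X, \cdot) \in W^{1,2}_{\mathrm{loc}}(\Omega)$ with an integrable boundary trace, and is therefore the structural input that makes the representation formula---and thus the entire argument---valid in this rough and high codimensional setting.
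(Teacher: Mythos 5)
Your proposal follows essentially the same route as the paper's Section 7 proof: there, too, the key is the representation formula $\omega^X_R(E)=a\int_E G_R(x,X)\,d\sigma(x)$ (Theorem \ref{t:GreenfunctionExistence} and \eqref{e:repformulae}), which gives $\omega^X_R\ll\sigma$ immediately, and then the boundary Harnack inequality (Theorem \ref{th:robinharnack}) combined with the Green identity \eqref{e:Greenidentity} tested against $\phi\equiv 1$ to exclude vanishing of the (nonnegative, continuous) density, which gives $\sigma\ll\omega^X_R$. Apart from harmless bookkeeping---with your normalization of $G^*$ (right-hand side $\varphi(X)$ and a $\frac1a$ on the gradient term) the duality computation yields density $G^*(X,\cdot)$ rather than $aG^*(X,\cdot)$, and near the pole one only has $W^{1,1}$/$W^{1,s}$ rather than $W^{1,2}_{\mathrm{loc}}$ integrability (irrelevant here, since your cutoff avoids the pole, and unrelated to the hypothesis $d>n-2$, which instead feeds the trace/Poincar\'e and Harnack machinery)---this is the paper's argument.
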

	
	We emphasize that the corresponding result for the Dirichlet problem is false for the Laplacian in the class of the domains we consider (see \cite{seven}); more specifically, \cite[Theorem 1.1]{seven} implies that $\sigma \not \ll \omega_{L, \sigma, D}^X \not \ll \sigma$ for $L = -\Delta$ and $\Omega$ as in Example~\ref{ex:roughdomain} above. It is also false even on a ball for an operator as general as ours. Indeed, it is well-known that the absolute continuity of the classical (Dirichlet) elliptic measure requires control on the oscillation of the coefficients, and, e.g.,  in \cite{MM, CFK}, the authors construct uniformly elliptic $A \in C(\overline{B}(0,1))$ such that the corresponding harmonic measure to the operator $-\mathrm{div}(A(x)\nabla u)$ is not mutually absolutely continuous with respect to the surface measure on the ball.

	One of the components of  our proof of Theorem \ref{t:mainac} 
	and its quantitative version below 
	is a particularly easy representation formula relating the harmonic measure with the Green function, see \eqref{e:representationformula} and \eqref{e:repformulae}. Morally, the Poisson kernel in our case is the Green function itself, due to the Robin conditions. This fact makes the absolute continuity much more transparent than in the Dirichlet setting. A completely new challenge, on the other hand, is a lack of scale invariance, and the corresponding quantitative estimates split into different regimes.

	\begin{theorem}\label{t:ainfinitysmall} 
	[Quantitative mutual 
	absolute continuity of $\omega_{R}^X$ with respect to $\sigma$ at small scales] 
Let $(\Omega, \sigma)$ be a one sided NTA pair in $\R^n$, with $\Omega$ bounded.
Let $A$ be a uniformly elliptic real matrix valued function. 
Let $\omega_R^X = \omega_{L, \sigma, R}^X$ be the associated Robin harmonic measure. 
There exists  $C \geq 1$ (depending on the geometric constants for $(\Omega, \sigma)$
and the ellipticity of $A$) such that for all $x_0\in \partial \Omega$ and $0 < r \leq \diam(\Omega)$
such that $0 < a r^{2-n} \sigma(B(x_0,r)) \leq 1$, 
$X\in \Omega \backslash B(x_0, Cr)$ and Borel measurable sets 
$E \subset \partial \Omega \cap B(x_0, r)$, 
	\begin{equation}\label{e:Ainfinitysmall} 
	C^{-1} \frac{\sigma(E)}{\sigma(B(x_0,r))}\leq \frac{\omega_{R}^{X}(E)}{\omega_{R}^{X}(B(x_0,r)\cap \partial \Omega)}
	\leq C \frac{\sigma(E)}{\sigma(B(x_0,r))}.
	\end{equation}	
	\end{theorem}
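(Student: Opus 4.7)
\emph{Proof plan.} The plan is to reduce \eqref{e:Ainfinitysmall} to an oscillation estimate on the Robin Green function $G_L^X$ and then invoke a boundary Harnack inequality in the Neumann-like regime. By the representation formula the authors announce (obtained by testing \eqref{weakform} against $G_L^X$ and simultaneously testing the Green function equation against $u$, with the Green function of the adjoint replacing $G_L^X$ in the non-symmetric case), $\omega_R^X$ is proportional to $G_L^X|_{\partial\Omega}\, d\sigma$ on $\partial\Omega$, and hence
\begin{equation*}
\frac{\omega_R^X(E)}{\omega_R^X(\partial\Omega \cap B(x_0,r))}
= \frac{\int_E G_L^X\, d\sigma}{\int_{\partial\Omega\cap B(x_0,r)} G_L^X\, d\sigma}.
\end{equation*}
Thus the theorem reduces to the oscillation bound $\sup_{\partial\Omega \cap B(x_0,r)} G_L^X \le C\,\inf_{\partial\Omega \cap B(x_0,r)} G_L^X$, with $C$ depending only on the allowed data.

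For this oscillation estimate, I would pick the constant $C$ in the assumption $X \in \Omega \setminus B(x_0, Cr)$ large enough (depending only on the doubling constants) so that $G_L^X$ is a nonnegative weak solution of the \emph{homogeneous} Robin problem on $\Omega \cap B(x_0, 2r)$. Using \ref{CC1}, fix a corkscrew point $A_r(x_0) \in \Omega \cap B(x_0,r)$; by \ref{CC2} every $Q \in \partial\Omega \cap B(x_0,r)$ is connected to $A_r(x_0)$ through an interior Harnack chain of uniformly bounded length. So the desired comparison reduces to a local boundary Harnack estimate $G_L^X(Q) \le C\, G_L^X(A_r(x_0))$ (and its reverse) for every $Q \in \partial\Omega \cap B(x_0, r)$.

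The small-scale hypothesis $\tilde a := a r^{2-n}\sigma(B(x_0,r)) \le 1$ enters precisely in this boundary Harnack step. A scaling analysis of \eqref{weakform} from $B(x_0,r)$ to unit scale sends the effective Robin parameter to $\tilde a$: after rescaling, the interior Dirichlet form contributes at size $r^{n-2}$ while the boundary mass term contributes at size $\sigma(B(x_0,r))$, and their ratio is exactly $\tilde a$. Under $\tilde a \le 1$, the boundary bilinear form is dominated by the interior one, so at scale $r$ the problem is quantitatively in the Neumann regime, and one may invoke the boundary Harnack inequality of Sections \ref{s:neumannosc} and \ref{sec:density} (in particular Theorem \ref{th:neumannharnack}) with constants depending only on the geometric constants of $(\Omega,\sigma)$ and the ellipticity of $A$.

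The hardest part is making this last step genuinely quantitative, i.e., verifying that the De Giorgi--Moser--Nash machinery underlying Theorem \ref{th:neumannharnack} extends from the Neumann to the Robin setting with constants uniform over \emph{all} scales satisfying $\tilde a \le 1$. After rescaling, the additional Robin boundary term plays the role of a bounded zeroth-order perturbation (with bound $\tilde a \le 1$), and careful bookkeeping in the iteration arguments is needed to confirm it does not spoil the Harnack estimate. Once this is established, combining the boundary Harnack with the interior Harnack chain yields the oscillation bound for $G_L^X$, and substituting into the ratio identity from the first paragraph produces \eqref{e:Ainfinitysmall} immediately.
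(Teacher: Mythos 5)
Your reduction is the same as the paper's: by the representation formula \eqref{e:repformulae} one gets the two-sided bound \eqref{e:consofrep}, so the theorem reduces to showing that $G_R(\cdot,X)$ has bounded oscillation on $\partial\Omega\cap B(x_0,r)$, i.e.\ $\sup_{\Delta}G_R(\cdot,X)\le C\inf_{\Delta}G_R(\cdot,X)$, using that $G_R(\cdot,X)$ is a nonnegative solution with vanishing Robin data near $B(x_0,r)$ once $X\notin B(x_0,Cr)$. Up to this point your plan matches the paper.

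The gap is in the step that actually produces the oscillation bound. First, the claim that every $Q\in\partial\Omega\cap B(x_0,r)$ is joined to the corkscrew point $A_r(x_0)$ by an interior Harnack chain of \emph{uniformly bounded} length is false: by \ref{CC2} the chain length grows like $\log(r/\mathrm{dist}(\cdot,\partial\Omega))$ and blows up at the boundary, which is exactly why an interior-Harnack argument cannot reach $\partial\Omega$ and a Harnack inequality \emph{up to the boundary} for Robin solutions is needed. That result is not a ``bounded zeroth-order perturbation plus bookkeeping'' upgrade of the Neumann Theorem~\ref{th:neumannharnack}: the paper proves it as Theorem~\ref{th:robinharnack}, via the tent domains of Lemma~\ref{l:tentspaces}, the auxiliary mixed Neumann/Dirichlet corrector of Lemma~\ref{th:neumannsub} absorbing the Robin boundary term, and only then Theorem~\ref{th:neumannharnack} applied to $u-h+m$; and crucially it only holds under the smallness condition $a r^{2-n}\sigma(B(x_0,r))\le c_0$ with $c_0$ a \emph{small} constant (needed to absorb $\|h\|_\infty\le Cc_0 M$), not under the hypothesis $a r^{2-n}\sigma(B(x_0,r))\le 1$ of the theorem. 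Your proposal asserts uniformity over the whole range $\tilde a\le 1$ without an argument; the paper bridges the regime $c_0\le a r^{2-n}\sigma(B(x_0,r))\le 1$ by a separate covering/rescaling step: for $r_0=\lambda r$ and $x\in\Delta$, the lower-dimension bound \eqref{1n3} and doubling give $a r_0^{2-n}\sigma(B(x,r_0))\le C\lambda^{d-(n-2)}a r^{2-n}\sigma(B(x_0,r))\le c_0$ for $\lambda$ small, after which Theorem~\ref{th:robinharnack} is applied on the small balls and the results are glued by interior Harnack chains (now legitimately, since one only compares corkscrew points at comparable scales). Without citing Theorem~\ref{th:robinharnack} (or reproving it) and without this covering argument for $c_0<\tilde a\le 1$, the proof is incomplete at its central analytic step.
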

	
	\ms
Notice the (unexpected) optimal exponent $1$ in this estimate (compare to usual $A_p$ weights conditions).

We said 
``weight-type'' because our estimates only hold when 
$a r^{2-n} \sigma(B(x_0,r)) \leq 1$.  
 This, however, is natural and necessary. 
 Indeed, consider for instance $\Omega$ with a $d$-Ahlfors regular boundary, and 
 $\sigma = \mathcal H^{d}|_{\d\Omega}$. Contrary to what happens with
the Dirichlet problem, \eqref{problem} is not invariant under rescaling, even when 
$\d\Omega$ is Ahlfors regular of dimension $d$, because 
 if $E\mapsto \omega_R^X(E)$ is the Robin harmonic measure on $\Omega$, then
 $E \mapsto\omega_R^{rX}(rE)$ is the Robin harmonic measure of $\Omega/r$ with 
 parameter $ar^{2-n+d}$. This parameter tends to $0$ when $r$ tends to $0$,
 hence we expect $\omega_R^X(E)$ to behave more like the ``Neumann harmonic measure"
 at small scales. We have put the ``Neumann harmonic measure" in the quotation marks as the concept fully analogous to the Dirichlet case does not exist, but exploiting suitable limits of solutions, heuristically at least, the Neumann harmonic measure should be proportional to $\sigma$. Similarly, when $r$ tends to $+\infty$ (or at least gets very large), we expect $\omega_R^X(E)$ to behave more like the Dirichlet harmonic measure.
 
 Said otherwise, in the Ahlfors regular context, $a^{-\frac{1}{2-n+d}}$ scales like a distance,
and generally we expect that for $a$ fixed, solutions of \eqref{problem} will be ``Dirichlet like" at scales larger than $a^{-\frac{1}{2-n+d}}$ and ``Neumann like" at scales smaller than $a^{-\frac{1}{2-n+d}}$. 
Something similar happens in the mixed dimension case, where the analogue of $ar^{2-n+d}$
is $ar^{2-n} \sigma(B(x,r))$, which also tends to $0$ because of the asymptotic dimensional constraint \eqref{1n3}.

When $ar^{2-n+d} \gg 1$ (the Dirichlet regime), getting  results which degenerate is to be expected, 
since we know that the Dirichlet harmonic measure is not absolutely continuous
with respect to $\sigma$ for the full range of operators and domains covered by our theorems. 
We see this is Theorem \ref{t:ainfinitylarge} below. We do not claim our rate of degeneracy is optimal\footnote{For example, for $A$ symmetric and $n\geq 3$ a variational argument can be used to control the ratio of the harmonic measure by the square root of $ar^{2-n+d}$ times the ratio of the surface measures when $\d\Omega$ is Ahlfors regular of dimension $d$.} and indeed, understanding the precise rate of degeneracy as $ar^{2-n+d}\uparrow \infty$ is an interesting open question which should have implications for the modeling of the ``passivation" 
of irregular surfaces, see, e.g. \cite{FGAS}. We plan to address this in future work. 

	\begin{theorem}\label{t:ainfinitylarge}[Quantitative mutual absolute continuity of 
	$\omega_{R}^X$ with respect to $\sigma$ at large scales]
Let $(\Omega, \sigma)$ be a one sided NTA pair in $\R^n$, with $\Omega$ bounded.
Let $A$ be a uniformly elliptic  real matrix valued function. 
Let $\omega_R^X = \omega_{L, \sigma, R}^X$ be the associated Robin harmonic measure. 
There exists  $C \geq 1$ (depending on the geometric constants for $(\Omega, \sigma)$
and the ellipticity of $A$) and a $\gamma > 0$ (depending also on the geometric constants 
for $(\Omega, \sigma)$ 
and the ellipticity of $A$) such that for all $x_0\in \partial \Omega$ and $0 < r \leq \diam(\Omega)$
such that $a r^{2-n} \sigma(B(x_0,r)) \geq 1$, 
$X\in \Omega \backslash B(x_0, Cr)$ and Borel measurable sets 
$E \subset \partial \Omega \cap B(x_0, r)$, 
	\begin{multline}\label{e:Ainfinitylarge} 
	C^{-1} \left(ar^{2-n} \sigma(B(x_0,r))\right)^{-\gamma}\frac{\sigma(E)}{\sigma(B(x_0,r))}
	\\ \leq \frac{\omega_{R}^{X}(E)}{\omega_{R}^{X}(B(x_0,r)\cap \partial \Omega)}
	\\ \leq C \left(ar^{2-n} \sigma(B(x_0,r))\right)^\gamma \frac{\sigma(E)}{\sigma(B(x_0,r))}. 
	\end{multline}	
	\end{theorem}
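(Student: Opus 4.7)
The plan is to leverage Theorem \ref{t:ainfinitysmall} at the ``transition scale'' $r_1 \leq r$ at which $a r_1^{2-n}\sigma(B(x_0,r_1)) \sim 1$, and then propagate the resulting comparison up to scale $r$ via the interior Harnack chain condition \ref{CC2}. This chaining produces a polynomial loss in $r/r_1$, which the dimensional constraint \eqref{1n3} converts into the claimed power $\gamma$ of $a r^{2-n}\sigma(B(x_0,r))$.

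First, invoking the representation formula \eqref{e:representationformula}--\eqref{e:repformulae}, which morally identifies $d\omega_R^X|_{\partial \Omega}$ with $a G^X d\sigma$ (here $G^X$ is the Robin Green function), the double inequality \eqref{e:Ainfinitylarge} reduces to a quantitative oscillation estimate for $G^X$ on the relevant piece of the boundary, namely
\begin{equation*}
\sup_{Q\in B(x_0,r)\cap\partial\Omega} G^X(Q) \leq C\,\bigl(a r^{2-n}\sigma(B(x_0,r))\bigr)^{\gamma} \inf_{Q\in B(x_0,r)\cap\partial\Omega} G^X(Q).
\end{equation*}
The assumption $X \notin B(x_0,Cr)$ guarantees that $G^X$ is a positive $L$-solution satisfying the homogeneous Robin condition on a neighborhood of $B(x_0,r)\cap \partial \Omega$, which is what makes this reduction clean.

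Second, I define $r_1 \in (0,r]$ as the largest radius with $a r_1^{2-n}\sigma(B(x_0,r_1)) \leq 1$; such $r_1$ exists and is strictly positive because the dimensional lower bound $d > n-2$ in \eqref{1n3} forces $a s^{2-n}\sigma(B(x_0,s)) \to 0$ as $s \to 0$. At this scale (and, by the doubling \eqref{1n4}, on any concentric boundary ball $B(y,r_1)\cap\partial\Omega$ with $y \in B(x_0,r)\cap\partial\Omega$), Theorem \ref{t:ainfinitysmall} applies and, via the same representation formula, yields the pointwise comparison $G^X(Q) \approx G^X(A_{r_1}(y))$ for $Q \in B(y,r_1)\cap \partial\Omega$, where $A_{r_1}(y)$ is the interior corkscrew provided by \ref{CC1}.

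Third, to compare $G^X(Q)$ and $G^X(Q')$ for arbitrary $Q, Q' \in B(x_0,r)\cap\partial\Omega$, I pick corkscrews $A = A_{r_1}(Q)$ and $A' = A_{r_1}(Q')$, apply the previous step at both endpoints, and compare the interior values $G^X(A)$ and $G^X(A')$ using a Harnack chain. These points lie at distance $\gtrsim r_1$ from $\partial\Omega$ and at distance $\leq 2r$ from each other, so \ref{CC2} connects them by $\lesssim \log_2(r/r_1)$ balls; iterating the standard interior Harnack inequality yields $G^X(A) \leq C^{M\log_2(r/r_1)}G^X(A') = (r/r_1)^{\beta} G^X(A')$ for some $\beta$ depending only on the ellipticity and geometric constants. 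For the final conversion, $a\sigma(B(x_0,r_1)) \sim r_1^{n-2}$ together with \eqref{1n3} gives
\begin{equation*}
a r^{2-n}\sigma(B(x_0,r)) \sim (r_1/r)^{n-2}\frac{\sigma(B(x_0,r))}{\sigma(B(x_0,r_1))} \geq c_d (r/r_1)^{d-(n-2)},
\end{equation*}
so that $r/r_1 \leq C (a r^{2-n}\sigma(B(x_0,r)))^{1/(d-(n-2))}$, and $\gamma = \beta/(d-(n-2))$ works. The lower bound in \eqref{e:Ainfinitylarge} follows from the same oscillation estimate by swapping the roles of the extremal points.

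The main obstacle is the second step: upgrading the measure-theoretic comparability ``$\omega_R^X$ and $\sigma$ are mutually bounded on $B(y,r_1)\cap\partial\Omega$'' from Theorem \ref{t:ainfinitysmall} into the pointwise equivalence $G^X(Q) \approx G^X(A_{r_1}(Q))$, with a constant independent of $r/r_1$. This requires combining the representation formula with the boundary H\"older continuity of Theorem \ref{t:contandhm} and the Neumann/Robin Harnack up to the boundary established in Sections \ref{s:neumannosc}--\ref{sec:density}; without care here one risks picking up logarithmic losses in $r/r_1$ at every comparison, which would spoil the polynomial rate $\gamma$ in \eqref{e:Ainfinitylarge}.
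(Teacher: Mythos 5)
Your skeleton is the same as the paper's: reduce via the representation formula \eqref{e:repformulae}, \eqref{e:consofrep} to an oscillation bound for $G_R(\cdot,X)$ on $B(x_0,r)\cap\partial\Omega$, pass to a transition scale at which small-scale comparability holds, chain up with interior Harnack along $\sim\log(r/r_1)$ balls, and use \eqref{1n3} to convert the chain-length loss into a power of $ar^{2-n}\sigma(B(x_0,r))$. However, the step you yourself call the main obstacle is a genuine gap, and the route you propose for it does not close it. Theorem \ref{t:ainfinitysmall} is a statement about the boundary restriction of $\omega_R^X$ versus $\sigma$; even combined with the continuity of $G_R$ up to the boundary (which does give $G_R(Q,X)\approx G_R(Q',X)$ for boundary points $Q,Q'$ in the same small ball, via Lebesgue differentiation), it says nothing about the value $G_R(A_{r_1}(y),X)$ at the interior corkscrew point, which is exactly the quantity you need in order to start the interior Harnack chain. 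The paper does not ``upgrade'' the measure-level statement: it applies the boundary Harnack inequality, Theorem \ref{th:robinharnack}, directly to $G_R(\cdot,X)$, which is a nonnegative solution with \emph{homogeneous} Robin data on $B(y,K^2 r_0)$ because $X\notin B(x_0,Cr)$; with $\beta=0$ the hypothesis \eqref{e:notgenuine2} is vacuous (cf.\ Remark \ref{r:harnackandproblem}), and one gets $\sup_{\Omega\cap B(y,r_0)}G_R\leq C\inf_{\Omega\cap B(y,r_0)}G_R$ at once, pointwise and including the corkscrew point, with no risk of logarithmic losses. Since Theorem \ref{t:ainfinitysmall} is itself deduced from Theorem \ref{th:robinharnack}, your detour through the small-scale weight estimate is both insufficient and circular in spirit; the missing ingredient is precisely the tool you mention only in passing.

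A second inaccuracy: the doubling property \eqref{1n4} does not allow you to transfer the condition $a r_1^{2-n}\sigma(B(x_0,r_1))\leq 1$ to balls $B(y,r_1)$ with $|y-x_0|\leq r\gg r_1$, because doubling compares a ball only with its own dilates, not with far-away balls of the same radius; $\sigma(B(y,r_1))$ may exceed $\sigma(B(x_0,r_1))$ by a power of $r/r_1$. The paper's fix is to choose the small radius $r_0=\lambda r$ with $\lambda\sim \left(ar^{2-n}\sigma(B(x_0,r))\right)^{-1/(d-(n-2))}$ and to bound, for every $y\in B(x_0,r)\cap\partial\Omega$, $a r_0^{2-n}\sigma(B(y,r_0))\leq C\lambda^{\,d-(n-2)}\, a r^{2-n}\sigma(B(x_0,r))\leq c_0$, using \eqref{1n3} centered at $y$ together with doubling at the top scale (since $B(y,r)\subset B(x_0,2r)$). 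With this choice the chain length is $\lesssim \log\bigl(ar^{2-n}\sigma(B(x_0,r))\bigr)$ directly, and your final conversion, yielding $\gamma$ of the order of the interior Harnack constant divided by $d-(n-2)$, goes through unchanged.
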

	
\ms
We end this section with a short discussion of the hypothesis of Theorems \ref{t:ainfinitysmall} and \ref{t:ainfinitylarge}. We do not know if the interior corkscrew condition can be weakened, but the ``trapped domain" examples in \cite{BBC} suggest that the Robin problem can have strange behavior on domains with exterior cusps. 
	
We also believe (suitably local) versions of our theorems may 
hold if the Harnack chain hypothesis is replaced by some sort of quantitative connectivity with the boundary (e.g. weak local John). We chose to work in this simplified setting in order to highlight the main ideas without introducing additional complications. 
	
	This is also why we decided to assume that $\Omega$ is bounded; probably
most results can be generalized to unbounded domains, perhaps at the price of
some complications because the Neumann problem is not well defined for those domains.

	As previously mentioned, by varying $\sigma$ the above theorems extend to variable rather than constant factor $a$, with $a, a^{-1} \in L^\infty(d\sigma)$. 

 We would also like to emphasize (again) that our results are already 
 new and interesting in the case of co-dimension $1$, 
 which is very well studied in the Dirichlet setting.

	\subsection{Outline of the Paper and Comparison with Other Work }\label{ss:comparison}
	
	In Section \ref{s:TEP} we establish the necessary trace/extension theorems and Poincar\'e inequalities in our setting, which we then use to show the existence of weak solutions in the sense of \eqref{weakform} for any $f \in L^2(\partial \Omega)$  via Lax-Milgram (the presence of the normal derivative in the Robin condition allows for lower regularity data, in contrast with the Dirichlet problem). 
	We note that for the Robin problem there is a huge literature concerning Sobolev regularity and solvability (for a small sampling see, e.g. \cite{LS, YYY, DL, GMN}).
	
	In Sections \ref{s:neumannosc} and 
	\ref{sec:density}, we prove basic elliptic estimates (e.g. Moser, oscillation and density estimates) up to the boundary for (local) weak solutions to the Neumann and Robin problems. This culminates in the proof of Theorem \ref{t:HolderContinuity}, that weak solutions to the Robin problem with H\"older continuous data are H\"older continuous up to the boundary. While the general strategy behind these estimates will  be familiar to experts, we do not believe these results are known outside of Lipschitz domains for either the Neumann or Robin problems (see \cite[Section 3]{LS}, for some of these estimates in the context of the Robin problem in Lipschitz domains and also \cite{VS} which proves continuity up to the boundary for continuous data in Lipschitz domains using other methods). It is also important to us to carefully track the dependence of our estimates on the parameter $a$. In these sections 
we prove a Harnack inequality ``up to the boundary" (Theorem \ref{th:robinharnack} and the discussion afterwards) which leads to our partial solution of \cite[Problem 1.2]{BBC}.

	Once we have the continuity up to the boundary, it is then straightforward to construct 
the Robin elliptic measure using the Riesz representation theorem and the maximum principle; 
we do this in Section \ref{s:RRT}, thus completing the proof of Theorem \ref{t:contandhm} above.

	Finally, in Section \ref{s:abscont}, we prove our main Theorems \ref{t:mainac},  \ref{t:ainfinitysmall} and \ref{t:ainfinitylarge}. As mentioned above, our key observation is a representation formula for Robin harmonic measure proven in Section \ref{s:RRT}, which together with the boundary Harnack inequality (Theorem \ref{th:robinharnack}) immediately gives our main theorems. 
	
\section{Functional Analytic Preliminaries and Poincar\'e}\label{s:TEP}
	
In order to find weak solutions to our problem, we need appropriate extension and trace operators. The main hero here is the space $W = W^{1,2}(\Omega)$ of functions
$f\in L^2(\Omega)$ with a derivative in $L^2(\Omega)$. We equip $W$ with the natural
norm
\begin{equation} \label{2a1}
||u||_W = \Big\{ \int_{\Omega} |\nabla u|^2 dx + \int_{\Omega} |u|^2 d\sigma \Big\}^{1/2}. 
\end{equation}
Here is what we need for our basic theorem of existence of weak solutions.

	\begin{theorem}\label{prop:equivalence of norms} 
Let $(\Omega,\sigma)$ be a one-sided NTA pair of mixed dimension, and assume that $\Omega$ is bounded.
Then
\begin{equation} \label{2a2}
\text{the canonical injection $W^{1,2}(\Omega)\rightarrow L^2(\Omega)$ is compact,}
\end{equation}
\begin{equation} \label{2a3}
\text{there is a trace operator  $\Tr: W^{1,2}(\Omega)\rightarrow L^2(\d\Omega, \sigma)$, which is bounded and compact,}
\end{equation}
with a bound for the norm of $\Tr$ that depends only on the geometric constants of 
$\Omega$, $\sigma(\d\Omega)$, and the diameter of $\Omega$,
\begin{equation} \label{2a4}
\text{the Sobolev norm $\|u\|_{W}$ is equivalent to  
$\|u\|_{\Tr}:=\big(\|\nabla u\|_{L^2(\Omega)}^2+\|\Tr u\|_{L^2(\sigma)}^2 \big )^{\frac{1}{2}}$ ,}
\end{equation}		
\begin{equation} \label{2a5}
\text{$\Omega$ is an extension domain,}
\end{equation}		
which means that every $u \in W^{1,2}(\Omega)$ has an extension 
$\widehat u \in W^{1,2}(\R^n)$, and in addition $||\widehat u||_{W^{1,2}(\R^n)} \leq C ||u||_W$,
where $C$ depends only on the geometric constants of $\Omega$, $\sigma(\d\Omega)$, and its diameter, 
\begin{equation} \label{2a6}
\text{$C_c^\infty(\mathbb R^n)$ is dense in $W^{1,2}(\Omega)$,}
\end{equation}		
and finally 
\begin{equation} \label{2a7} 
W^{1,2}(\Omega) \subset L^p(\Omega) \text{ for some } p >2,
\end{equation}
where $p$ depends only on the geometric constants for $\Omega$, and with a bound for the norm
of the injection that depends only on the geometric constants for $\Omega$, $p$, the diameter
of $\Omega$, and $\sigma(\d\Omega)$.
	\end{theorem}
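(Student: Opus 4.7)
The plan is to treat the six claims in the order \eqref{2a5}, \eqref{2a6}, \eqref{2a2}, \eqref{2a7}, \eqref{2a3}, \eqref{2a4}, so that the extension and compactness machinery is available for the trace and norm-equivalence steps.

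\textbf{Extension, density, Rellich, Sobolev.} First I would observe that \ref{CC1} and \ref{CC2} together make $\Omega$ an $(\varepsilon,\delta)$-domain in the sense of Jones: the Harnack chains yield an interior twisted double cone between any two nearby points of $\Omega$, and the corkscrew condition keeps it uniformly far from $\partial \Omega$. Jones' extension theorem then supplies the bounded linear extension $W^{1,2}(\Omega) \to W^{1,2}(\R^n)$ claimed in \eqref{2a5}, with norm depending only on the geometric constants and $\diam(\Omega)$; premultiplying by a cutoff puts the extension in a fixed large ball. Mollifying this extension proves \eqref{2a6}. Composing the extension with the Rellich--Kondrachov compact embedding on that ball yields \eqref{2a2}, and composing with the Sobolev embedding $W^{1,2}(\R^n) \hookrightarrow L^{2n/(n-2)}(\R^n)$ (when $n \geq 3$; any $p < \infty$ when $n = 2$) gives \eqref{2a7}.

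\textbf{The trace operator --- the main obstacle.} The principal work lies in \eqref{2a3}, since $\sigma$ is only doubling with the lower dimensional bound \eqref{1n3} rather than Ahlfors regular. The condition $d > n-2$ is precisely the threshold that lets $W^{1,2}$ see $\sigma$: it forces $\partial \Omega$ to be of effective codimension strictly less than $2$. Writing $\widehat u$ for the Jones extension, I would define
\begin{equation*}
\Tr u(Q) := \lim_{k \to \infty} u_k(Q), \qquad u_k(Q) := \frac{1}{|B(Q, 2^{-k} r_0)|} \int_{B(Q, 2^{-k} r_0)} \widehat u \, dx,
\end{equation*}
for $Q \in \partial \Omega$, and establish existence $\sigma$-a.e.\ together with an $L^2(\sigma)$-bound through the telescoping Poincaré estimate
\begin{equation*}
|u_k(Q) - u_{k+1}(Q)| \leq C \, 2^{-k} r_0 \left( \frac{1}{|B(Q, 2^{-k} r_0)|} \int_{B(Q, 2^{-k} r_0)} |\nabla \widehat u|^2 \, dx \right)^{1/2}.
\end{equation*}
Squaring, summing in $k$, integrating against $\sigma(Q)$ and using Fubini converts this into an integral in $x$ against a kernel of the form $\bigl(2^{-k} r_0\bigr)^{2-n}\, \sigma\bigl(B(x, 2^{-k+1} r_0) \cap \partial \Omega\bigr)$. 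This is precisely where the doubling \eqref{1n4} and the lower bound \eqref{1n3} combine to make the $k$-series geometric and convergent, using $d > n - 2$. The result is the $L^2(\sigma)$-bound with constant depending on the geometric constants, $\sigma(\partial \Omega)$, and $\diam(\Omega)$. Splitting the same telescoping into small and large scales around $\partial \Omega$ produces an interpolation-type inequality
\begin{equation*}
\|\Tr u\|_{L^2(\sigma)} \leq \epsilon \|\nabla u\|_{L^2(\Omega)} + C_\epsilon \|u\|_{L^2(\Omega)} \quad \text{for every } \epsilon > 0,
\end{equation*}
which together with the compact embedding \eqref{2a2} upgrades $\Tr$ to a compact operator.

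\textbf{Equivalence of norms.} Once \eqref{2a2} and \eqref{2a3} are in place, \eqref{2a4} follows by a standard compactness-contradiction argument. If it failed, there would be $\{u_k\} \subset W^{1,2}(\Omega)$ with $\|u_k\|_W = 1$ and $\|\nabla u_k\|_{L^2(\Omega)} + \|\Tr u_k\|_{L^2(\sigma)} \to 0$. Weak $W^{1,2}$-compactness together with the compact embeddings \eqref{2a2} and \eqref{2a3} yields a subsequential limit $u_\infty$ with $\nabla u_\infty \equiv 0$ and $\Tr u_\infty \equiv 0$; the Harnack chain condition \ref{CC2} makes $\Omega$ connected, so $u_\infty$ is constant, and the vanishing trace forces this constant to be $0$---contradicting $\|u_k\|_{L^2(\Omega)} \to 1$. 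The whole theorem is therefore driven by the trace construction in the previous paragraph, which is where the mixed-dimensional geometry genuinely enters and which I expect to be the chief technical obstacle.
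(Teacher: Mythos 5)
Your proposal is correct in outline, but it follows a genuinely different route from the paper in the two places where the real work lies. For the extension, density, Rellich compactness and the Sobolev improvement the two arguments coincide (both invoke Jones' $(\varepsilon,\delta)$-extension theorem; in fact your route gives \eqref{2a7} with the endpoint $p=\tfrac{2n}{n-2}$ directly, slightly better than what the paper claims in general). For the trace, however, the paper does not construct anything from scratch: it verifies that a one-sided NTA pair of mixed dimension satisfies the hypotheses (H1)--(H6) of \cite{DFM20} and imports the homogeneous trace and extension operators $T:\dot W\to\dot H$, $E:\dot H\to\dot W$ from there, then proves localized Poincar\'e--trace inequalities (Lemmas \ref{th:Poincare1}, \ref{th:Poincare3bis}, \ref{th:GlobalPoincare3}) which simultaneously give the $L^2(\sigma)$ bound, the norm equivalence \eqref{2a4} with an explicit constant $C\,\diam(\Omega)^2$, and compactness of $\Tr$ via a total-boundedness covering argument. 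Your self-contained construction of $\Tr$ as the $\sigma$-a.e.\ limit of ball averages of the Jones extension, with the telescoping Poincar\'e estimate and the kernel $(2^{-k}r_0)^{2-n}\sigma(B(x,2^{-k+1}r_0))$ made summable by \eqref{1n4} and \eqref{1n3} with $d>n-2$, is a valid alternative and is essentially the same mechanism that underlies the DFM20 trace; it buys independence from the external reference at the cost of redoing estimates the paper needs anyway (the local Poincar\'e lemmas are reused heavily in Sections \ref{s:neumannosc}--\ref{s:RRT}).

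Two small caveats. First, in your interpolation inequality $\|\Tr u\|_{L^2(\sigma)}\le\epsilon\|\nabla u\|_{L^2(\Omega)}+C_\epsilon\|u\|_{L^2(\Omega)}$, if the large-scale term is the average of the Jones extension $\widehat u$ over a full ball $B(Q,\rho)$ you only control it by $C_\rho\|\widehat u\|_{L^2(\R^n)}\le C_\rho\|u\|_{W}$, which is not enough to invoke \eqref{2a2}; you should either average over $B(Q,\rho)\cap\Omega$ or over a corkscrew ball inside $\Omega$ (absorbing the discrepancy into the $\epsilon\|\nabla\widehat u\|$ term by Poincar\'e at scale $\rho$), or else run the Ehrling argument with $\widehat u_k\to0$ strongly in $L^2$ of a large ball via Rellich and linearity of the extension. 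Second, your compactness-contradiction proof of \eqref{2a4} is qualitative: it yields a constant depending on the particular domain, whereas the paper's Lemma \ref{th:GlobalPoincare3} gives the equivalence with a constant depending only on the geometric constants and $\diam(\Omega)$; this quantitative form is what the paper actually uses later, so if you intend your argument to feed into the rest of the development you should replace the contradiction step by the explicit global Poincar\'e--trace estimate (which your telescoping machinery already essentially provides).
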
 

\begin{proof}
Fortunately, most of the work was done 
before and this theorem will rather easily follow from the literature.
This will also be a nice place to comment on the mixed dimension condition.

First observe that since our domains are $1$-sided NTA domain, this makes them 
$(\epsilon, \delta)$-domain in the sense of \cite{J} (the definitions are equivalent). 
This alone implies that the $\Omega$ 
considered here are Sobolev extension domains.
Since we shall only use the theorem for the existence result below, we do not track down the 
dependence on $\diam(\Omega)$, which would be easy to sort out from homogeneity anyway.
So \eqref{2a5} holds, but also \eqref{2a2} and \eqref{2a6}. Indeed, for \eqref{2a2} we observe 
that the canonical injection $W^{1,2}(\Omega)\rightarrow L^2(\Omega)$ is the composition
of an extension mapping $\varphi : W^{1,2}(\Omega) \to W^{1,2}(\R^n)$, and the restriction
mapping $\psi: W^{1,2}(\R^n) \to L^2(\Omega)$. This last is compact, by the 
Rellich-Kondrachov theorem (we can even restrict compactly to $L^2$ of a large ball that contains 
$\Omega$). Similarly, \eqref{2a6} holds because if $f\in W$, it has an extension in $W^{1,2}(\R^n)$
which is well approximated, even in $W^{1,2}(\R^n)$, by functions in 
$C_c^\infty(\mathbb R^n)$. Of course all this means is that the theorem of \cite{J} is strong.

For the other properties, let us first discuss the special case when $\d\Omega$ is Ahlfors regular of 
dimension $d \in (n-2,n)$. Then we are clearly in the general theory described, e.g., in \cite{HKT}.
Furthermore, because $\partial \Omega$ is Ahlfors regular (what is called a $d$-set in \cite{AR}), the domains considered here are ``admissible" and thus a trace operator is well defined.
We refer to\cite[Proposition 3]{AR}, the case $k=1$, for the proof. 
In this case, we even have an explicit range of exponents $p$ that work in \eqref{2a7}, because actually
\begin{equation} \label{2a7bis}
W^{1,2}(\Omega) \subset L^p(\Omega) \text{ for all } p \in \left[1, \frac{2n}{n-2}\right],
\end{equation}
with again bounds on the norm of the canonical injection which depends only on $p$, the geometric constants of $\Omega$, and its diameter.

\begin{remark} \label{rk1}
 We decided to state the theorem in the more general case of pairs of mixed dimension, so we 
 will need to prove it in this case too. But first let us explain why we think the notion is interesting
 and comment about it.
 
 The definition is not new: it was introduced by two authors of the present paper and J.~Feneuil in 
\cite{DFM20}, with a little more generality than here, because the authors also wanted to allow domains
with boundaries of low dimension (at the price of making our elliptic coefficients unbounded near 
$\d\Omega$).
Here we do not want to do this, so we take the conditions (H1)-(H6) of \cite{DFM20}
(near page 12), but restrict to the special case when the weight $w$ that defines the measure $m$
on $\Omega$ is identically equal to $1$. Thus $m$ here is just the restriction of the Lebesgue measure to $\Omega$, $\sigma$ here is the same as $\mu$ there, 
and the conditions (H1)-(H6) amount to the definition of (one sided NTA) pair of mixed dimension.
More precisely, (H1) and (H2) are the same as \ref{CC1} and \ref{CC2}, (H3) is the doubling condition \eqref{1n4}, (H4) (the existence of a density for $m$ and the fact that it is doubling)
is an easy consequence of the corkscrew condition \ref{CC1}, (H6) is easy because it is implied
by (H6'), which is true here because the weight is constant, and finally (H5) translates as follows.
For $x\in \d\Omega$ and $0 < r \leq \diam(\d\Omega)$, we introduce the quantity 
\begin{equation} \label{2n9}
\rho(x,r) =r^{n-1} \sigma(B(x,r)\cap \partial \Omega)^{-1}
\end{equation}
(which is equivalent to 
$\frac{|B(x,r)\cap\Omega|}{r \sigma(B(x,r)\cap \partial \Omega)}$ in (2.6) of 
\cite{DFM20}, because of \ref{CC1}, and require as in (2.7) of \cite{DFM20} that
$\rho(x,r) \leq C (r/s)^{1-\varepsilon} \rho(x,s)$ for $0 < s < r \leq \diam(\d\Omega)$.
This transforms into $\sigma(B(x,r)) \geq C^{-1} (r/s)^{n-2+\varepsilon} \sigma(B(x,s))$,
which is just \eqref{1n3} with $d = n-2+\varepsilon > n-2$.
So the present definition is just a special case of the main assumptions of \cite{DFM20},
and we can use the results of \cite{DFM20} in our context; the proofs there are also valid for 
$\d\Omega$ of finite diameter.

Before we come to that, let us give some motivation for this additional generality.
The most obvious one is that some interesting examples of $\d\Omega$ are not Ahlfors regular,
but instead behave at different scales like Ahlfors regular sets of different dimensions.
For instance, in Example \ref{ex:roughdomain} we could decide that when we go from 
generation $n$ to generation $n+1$, we replace each small square $Q$ of sidelength $r_k$
with four squares of sidelength $r_{k+1}$ contained in $Q$, and situated at the four corners of
$Q$. We took $r_{k+1} = \frac14 r_k$, but choosing any $r_k \in [ar_k, br_k]$, for some fixed choice of
constants $a$, $b$, with $0 < a < b < 1/2$ would lead to a pair of mixed dimension, and we could even
let $r_k$ depend on the cube $Q$ and not just the generation. In this case, the appropriate
$\sigma$ is just the probability measure that gives a weight of $4^{-n}$ to each square of generation $n$.
 
 A variant of this example is the prefractal that we obtain when we perform $n$ generations
 of the construction above, get a compact set $K_n$ composed of $4^n$ squares
 $Q$, and let $\Omega$ be the complement of the union of the $4^n$ squares. Thus $\d\Omega$
 is composed of $4^n$ square curves, and again $\sigma$ is the obvious probability measure that gives
 the mass $4^{-n}$ to each of these curves. It is good to have a formalism that treats the prefractals
 and the full fractals the same way. Incidentally, notice that if we do this with the ``one quarter Cantor set''
 of Example \ref{ex:roughdomain}, we still get an Ahlfors regular set of dimension $1$, with uniform bounds, but this is not the case in general. The prefractals associated to 
 Example \ref{ex:koch}, for instance, are $1$-dimensional at small scales and 
 $\log(4)/\log(3)>1$-dimensional at large scales (compared to $1$).
 
 The second reason why mixed dimensions were introduced in \cite{DFM20} is that
 the natural saw-tooth constructions, performed on Ahlfors-regular boundaries $\d\Omega$
 of dimensions $d \neq n-1$, yield boundaries of mixed dimensions (typically, $d$ near $\d\Omega$
 and $n-1$ along the sawtooth, far from $\d\Omega$). Thus it was very convenient to have a general theory ready for those domains, and this is still the case here. We will see this when we prove the
 localization lemma \ref{l:tentspaces}.
 
 The reader may be surprised that there is only one inequality like \eqref{1n3}, and nothing in the opposite direction. It turns out that we don't need one, but maybe the reader will find it comforting to observe
 that in the Ahlfors regular case, the existence of corkscrew points implies that the dimension $d$
 of $\d\Omega$ is strictly less than $n$.
 \end{remark}

Return to the results of \cite{DFM20}. The results there are stated for unbounded boundaries 
$\d\Omega$, but are still true when $\Omega$ is bounded as here. However,
\cite{DFM20} uses homogeneous spaces, and we have to go through the appropriate modifications here. 
Let $\dot{W}$ denote the homogeneous space of functions on $\Omega$ with a derivative in $L^2$,
with the norm $||f||_{\dot{W}} = \big\{ \int_{\Omega} |\nabla f|^2\big\}^{1/2}$.
Theorem 6.6 there gives a bounded trace operator $T : \dot{W} \to \dot{H}$, 
a homogeneous space akin to $H^{1/2}$ on $(\d\Omega, \sigma)$. This space is also
a space of functions defined modulo an additive constants, and it is explicit and defined by its norm 
$||g||_{\dot{H}}$, 
where
\begin{equation} \label{2n10}
||g||_{\dot{H}}^2 = \int_{\d\Omega}\int_{\d\Omega} \frac{\rho(x,|x-y|)^2| \,\, g(x)-g(y)|^2}{|x-y|^n} \, d\sigma(x)d\sigma(y)
\end{equation}
(see (6.5) there). Then, in Theorem 8.5 of \cite{DFM20}, a bounded extension operator 
$E : \dot{H} \to \dot{W}$ is built, with $T \circ E = I_{\dot{H}}$. Notice that then 
$\dot{H} = T(\dot{W})$, so the trace of $\dot{W}$ is explicit.

Here we take the trace operator $\Tr$ to be equal to $T$, i.e., we take the same formula
as in \cite{DFM20}, and it happens that $\Tr(f) \in L^2(\sigma)$ when $f \in W$. We know in addition 
that $\Tr$ goes to the quotient, in fact that $\Tr(f+C) = \Tr(f) + C$ when $f\in W$ and $C$
is a constant. The fact that out $\Tr$ maps $W$ boundedly to $L^2(\sigma)$ could be obtained
from the boundedness of $T : \dot{W} \to \dot{H}$ only, but we'll see explicit bounds anyway.
See near Lemma \ref{th:GlobalPoincare3}.

Notice that when we multiply $\sigma$ by $\lambda > 0$, we do not change the mixed 
dimension constants, nor the homogeneous Sobolev norm $||g||_{\dot{W}}$ above. 
However we multiply $\|g\|_{L^2(\sigma)}$ by $\sqrt{\lambda}$.
This explains why the various norms in the statement of Theorem \ref{prop:equivalence of norms}
depend on $\sigma(\d\Omega)$. We prefer not to normalize this out for the moment.

On $\d\Omega$, we use the no longer homogeneous space $H$ defined by the norm
$||\cdot ||_{H}$, where
\begin{equation} \label{2a8}
||g||_H^2 = \int_{\d\Omega} |f|^2 d\sigma + 
\int_{\d\Omega}\int_{\d\Omega} \frac{\rho(x,|x-y|)^2 \, |g(x)-g(y)|^2}{|x-y|^n} \, d\sigma(x)d\sigma(y).
\end{equation}
This norm changes when we multiply $\sigma$ by $\lambda$, but this is all right, we will keep track
of the important constants. 
We now keep the extension operator $E$ the same (i.e., defined with the same formula as in 
\cite{DFM20}). It maps $H$ to $W$ boundedly, and again we will control the norms in
terms of the geometric constants, $\sigma(\d\Omega)$, and $\diam(\Omega)$.

For Theorem \ref{prop:equivalence of norms} still need to prove the equivalence of norms 
in \eqref{2a4}, the better integrability in \eqref{2a7}, and that 
$\Tr : W \mapsto L^2(\d\Omega, \sigma)$ is compact (as announced  in \eqref{2a3}),
and for this some Poincar\'e inequalities will be useful. 
We start with better integrability.

\begin{lemma} \label{th:Poincare1} 
Let $(\Omega,\sigma)$ be a one sided NTA pair of mixed dimension in $\R^n$.  
There exist constants $K \geq 1$ and $p > 2$, that depend only on the geometric constants
for $(\Omega,\sigma)$, such that for $x \in \d\Omega$, $0 < r \leq \diam(\Omega)$, and any 
$u\in W= W^{1,2}(\Omega)$,
		\begin{equation} \label{2n12}
	\left( \fint_{B(x,r)\cap \Omega} |u(y)- \bar u|^{p} \, dy \right)^{1/p} 
	\leq C_c r \left(  \fint_{B(x,Kr)\cap \Omega} |\nabla u(y)|^2 \, dy \right)^{1/2},
		\end{equation}
where $\bar u$ is the average of $u$ on any set $E \subset B(x,2r)\cap \Omega$  
satisfying $|E| \geq c |B(x,r)\cap \Omega|$, and where $C_c>0$ depends only on $c,p$ and on the geometric constants for $(\Omega,\sigma)$.
	\end{lemma}

The story about allowing $E$ to be more general than, say, $E = B(x,r)\cap \Omega$ is
for our later convenience, but is just as easy to prove. 
It is only fair that $C_c$ depends on $c$ (and the dependence is not hard to sort out, comparing with
the main case when $E$ is a corkscrew ball). Also, the (not too) large constant $K$ is needed, 
here and in other similar results below, because
we need to be sure that $E$ is connected inside $\Omega \cap B(0,Kr)$ to the rest of
$B(0,r)$, so that we can use the gradient; since $\Omega$ could look like a hand with long fingers 
and $E$ and $\Omega \cap B(0,r)$ could be near the bulk of the hand but locally separated by fingers, 
$K$ is needed.

The lemma 
follows from \cite[Theorem 5.24]{DFM20}, applied with $p=2$ there and, say,
$D = T_{2Q}$ for a surface cube $Q$ of size $Cr$ that contains $0$. See the definition of
$T_{2Q}$ near (5.3) in \cite{DFM20}, and observe that for $C$ large enough, $T_{2Q}$ contains
$B(0,2r)$ and is contained in $B(0, C^2 r)$ (whence the need for $K$).
\qed

\begin{remark} \label{rk2}
This takes care of \eqref{2a7}.
Note that when $\d\Omega$ is Ahlfors regular,
we said earlier that we are allowed to take $p = \frac{2n}{n-2}$ (any $p < +\infty$ if $n=2$).
Here, if we trust \cite[Remark 5.32]{DFM20} and recall that when $m$ is the Lebesgue measure, the
dimension $d_m$ in \cite[(2.5)]{DFM20} is equal to $n$, 
we are allowed any $p <  \frac{2n}{n-2}$.
\end{remark}

Next we need worry about interactions between $f\in W$ and its trace on $\d\Omega$.
Let us prove a little more than what we need for the moment.

\begin{lemma} \label{th:Poincare3bis}
Let $(\Omega,\sigma)$ be a one sided NTA pair of mixed dimension in $\R^n$.  
There exist a constant $K \geq 1$, that depends only on the geometric constants
for $(\Omega,\sigma)$, and for each $c >0$, a constant $C_c$ that depends only on
the geometric constants for $(\Omega,\sigma)$ and $c$, such that for $x \in \d\Omega$, 
$0 < r \leq K^{-1}\diam(\Omega)$, $u\in W= W^{1,2}(\Omega)$, and $E \subset B(x,2r)\cap \Omega$  
satisfying $|E| \geq c |B(x,r)\cap \Omega|$,
	\begin{equation} \label{2nn}
	\fint_{B(x,r)\cap \d\Omega} |\Tr(u)(y)- \bar u|^{2} \, d\sigma(y) 
	\leq C_c r^2 \fint_{B(x,Kr)\cap \Omega} |\nabla u(z)|^2 \, dz ,
		\end{equation}
where again $\bar u$ is the average of $u$ on $E$.
	\end{lemma}

\begin{proof}
Indeed let $E \subset B(x,2r)$ and $\overline{u}$ be as in the statement,
and consider $v = \varphi (u- \overline u)$,
where $\varphi$ is a normalized cut-off function that is equal to $1$ on $B(x, 2r)$
and is supported in $B(x, 3r)$. Then $v \in W^{1,2}(\Omega)$ too, with
$\nabla v = \varphi \nabla u + (u- \overline u) \nabla\varphi$
(see \cite[Lemma~6.21]{DFM20}) and 
$$
\int_{\Omega} |\nabla v|^2 \leq 2\int_{\Omega \cap B(x,3r)} |\nabla u|^2 + 
C r^{-2}\int_{\Omega \cap B(x,3r)} |u - \overline u|^2
\leq C \int_{\Omega \cap B(x,3Kr)} |\nabla u|^2
$$
by Lemma \ref{th:Poincare1}. Call $g_0 = \Tr(v)$ be the trace of $v$; then $g_0 \in \dot{H}$, with
\begin{equation} \label{2n14}
||g_0||_{\dot{H}}^2 \leq 
C || v ||^2_{\dot{W}}  \leq C \int_{\Omega \cap B(x,Kr)} |\nabla u|^2.
\end{equation}
By construction of the trace, $g_0$ coincides with $\Tr(u) - \overline{u}$ on $\d\Omega \cap B(x,2r)$
and vanishes on $\d\Omega \sm B(x, 3r)$, so \eqref{2n10} yields
\begin{eqnarray} \label{2n15}
||g_0||_{\dot{H}}^2 &\geq& \int_{y\in B(x,2r)}\int_{z\in B(x,Kr) \sm B(x,3r)} 
{\rho(y,|z-y|)^2| g_0(y)-g_0(z)|^2} |y-z|^{-n} \, d\sigma(y)d\sigma(z)
\nonumber\\ 
&\geq& C^{-1}\sigma(\d\Omega \cap B(x,Kr) \sm B(x,3r)) \, [r^{n-1}\sigma(B(x,r)^{-1}]^2 
\int_{y\in B(x,2r)} r^{-n} |g_0(y)|^2 d\sigma(y)
\end{eqnarray}
because $\sigma$ is doubling so $\rho(y,|z-y|) \sim_K r^{n-1}\sigma(B(x,r))^{-1}$
for $y \in \d\Omega \cap B(x,Kr) \sm B(x,3r))$.
Also, we claim that if $K$ is large enough,
\begin{equation} \label{2n16}
\sigma(\d\Omega \cap B(x,Kr) \sm B(x,3r)) \geq C_K^{-1} \sigma(\d\Omega \cap B(x,r)).
\end{equation}
Indeed, if $K \geq 10$ and $\d\Omega$ meets $B(x,Kr/2) \sm B(x,4r)$ at some point $z$, then
$$
\sigma(\d\Omega \cap B(x,Kr) \sm B(x,3r)) \geq \sigma(B(z,r)) 
\geq C_K^{-1} \sigma(B(z, Kr)) \geq C_K^{-1} \sigma(B(x, (K+4)r)),
$$ 
which yields the desired result because $\sigma$ is doubling (and both $x$ and $z$ lie on its support).
If instead  $\d\Omega$ does not meet $B(x,Kr/2) \sm B(x,4r)$, then
$\sigma(B(x,Kr/2) = \sigma(B(x,4r)$, which contradicts the ``large enough dimension condition''
\eqref{1n3} if $c_d (K/8)^d > 1$. So \eqref{2n16} holds, and \eqref{2n14} and \eqref{2n15} yield
\begin{equation} \label{2n17}
\fint_{B(x,2r)} |\Tr(u) - \overline{u}|^2 d\sigma \leq C r^{-n+2} ||g_0||_{\dot{H}}^2
\leq C r^{-n+2} \int_{\Omega \cap B(x,Kr)} |\nabla u|^2.
\end{equation}
Now \eqref{2nn} follows from \eqref{2n17}, which proves the lemma. Parenthetically, we also have
\begin{equation} \label{2n18}
\fint_{B(x,2r)} |\Tr(u)|^2 d\sigma \leq 2|\overline{u}|^2 
+ C r^2 \fint_{\Omega \cap B(x,Kr)} |\nabla u|^2,
\end{equation}
which we will use at a later occasion.
\end{proof}

\ms
We did not state Lemma \ref{th:Poincare3bis} for $r > K^{-1}\diam(\d\Omega)$
because we do not want to confuse issues, 
but here is the global version that we will use for 
Theorem \ref{prop:equivalence of norms}.

\begin{lemma} \label{th:GlobalPoincare3}
Let $(\Omega,\sigma)$ be a bounded one sided NTA pair in $\R^n$.  
There exist a constant $C \geq 1$, that depends only on the geometric constants
for $(\Omega,\sigma)$, such that for $u\in W= W^{1,2}(\Omega)$, 
	\begin{equation} \label{2n19}
	\fint_{\d\Omega} |\Tr(u)(y)- m(u)|^{2} \, d\sigma(y)  + \fint_{\Omega} |u -m(u)|^2
	\leq C \diam(\Omega)^2 \fint_{\Omega} |\nabla u(z)|^2 \, dz ,
		\end{equation}
where $m(u) = \fint_{\Omega} u$ is the average of $u$ on $\Omega$.
	\end{lemma}

\begin{proof}
The estimate for $\fint_{\Omega} |u -m(u)|^2$ is just Lemma \ref{th:Poincare1}, with $p=2$,
so we just need to control the trace.
We cover $\d\Omega$ by less than $C$ balls $B_i = B(x_i, 10^{-1}\diam(\d\Omega))$ 
centered on $\d\Omega$, select a corkscrew ball
$D_i= B(\xi_i, C^{-1}\diam(\d\Omega)) \subset \Omega \cap B_i$ for each $B_i$, 
and set $\overline{u}_i = \fint_{D_i} u$. Notice that 
\begin{equation}\label{2n20}
|\overline{u}_i - m(u)|^2 \leq C \diam(\Omega)^2 \fint_{\Omega} |\nabla u(z)|^2 \, dz
\end{equation}
by Lemma \ref{th:Poincare1} again, and now 
\begin{eqnarray} \label{2n21}
\fint_{\d\Omega} |\Tr(u)(y)- m(u)|^{2} \, d\sigma(y)  
&\leq& \sigma(\d\Omega)^{-1} \sum_i \int_{\d\Omega} |\Tr(u)(y)- m(u)|^{2} d\sigma
\nonumber\\
&\leq& 2\sigma(\d\Omega)^{-1} \sum_i  
\int_{\d\Omega \cap B_i} |\overline{u}_i - m(u)|^2 + |\Tr(u)(y)- \overline{u}_i |^{2} d\sigma(y)
\nonumber\\
&\leq& C  \diam(\Omega)^2 \fint_{\Omega} |\nabla u|^2 
\end{eqnarray}
by  \eqref{2n20}, Lemma \ref{th:Poincare3bis} applied to each $B_i$, and the fact that
we have a bounded number of balls, all of volume and $\sigma$-measure comparable to those of
$\Omega$ and $\d\Omega$.
\end{proof}

We may now return to the proof of Theorem \ref{prop:equivalence of norms}. 
Observe first that \eqref{2a4} follows from this lemma. As for the compactness of 
$\Tr : W \to L^2(\sigma)$ announced in \eqref{2a3}, we need to know that the space $H$ of
\eqref{2a8} embeds compactly in $L^2(\sigma)$. We will spare the details to the reader, 
and instead list the ingredients, which are the same as for the theorem of Rellich-Kondrachov.

Call $B_W$ the unit ball of $W$, and observe that $\Tr(B_W) \subset H \subset L^2(\sigma)$,
which is a complete space. It is enough to show that $\Tr(B_W)$ is completely bounded,
i.e., that for each $\varepsilon > 0$ we only need a finite number of balls $B_j$ of radius $\varepsilon$
in $L^2$ to cover $\Tr(B_W)$. 

Cover $\d\Omega$ by a finite number of small balls $B_j$, 
$1 \leq j \leq N(\varepsilon)$, of radius $\eta(0) > 0$ (very small, to be chosen) and consider the finite 
set $X$ of functions $h \in L^2(\Sigma)$ that are constant on each 
$\widetilde B_j = B_j \sm \big(\cup_{i < j} B_i \big)$, and take values that
are less than $C(\varepsilon)$ (large) and integer multiples of  $\eta$.
We just need to know that for $f \in B_W$ we can find $h \in X$ such that $||h- \Tr(f)||_2 < \varepsilon$.
We take $h$ so that its constant value on $\widetilde B_j$ is within $\tau$ of 
$\fint_{\Omega \cap B_j} g$, for instance, and use our local Poincar\'e estimate 
Lemma \ref{th:Poincare3bis} to control  
\begin{equation}\label{2n22}
\int_{\widetilde B_j} |u-h|^2 \leq \int_{B_j} |u-h|^2 \leq C \tau^2 \int_{\Omega \cap CB_j} |\nabla u|^2.
\end{equation}
We then sum over $j$, use the finite cover property of our covering, and get that
$\int_{\d\Omega} |u-h|^2 \leq C \tau^2 \int_{\Omega} |\nabla u|^2$, with a constant $C$
that does not depend on $\tau$ or $f \in B_W$. Finally we take $\tau = \tau(\varepsilon)$ small enough 
and conclude.

This finally ends our verification of Theorem \ref{prop:equivalence of norms}.
\end{proof}

\subsection{An existence result for Robin boundary data}
	Theorem \ref{prop:equivalence of norms} is just what we need to apply the Stampacchia method to 
	find weak solutions, as in \eqref{weakform}, to our problem \eqref{problem}
$- {\rm div} A \nabla u = 0$ with Robin boundary data $\psi$, 
with $\psi \in H = \Tr(W)$ our space of traces of $W^{1,2}(\Omega)$ functions. 
Recall that, due to the rough boundary, we decided to only consider weak solutions.

	\begin{theorem}\label{th:existencerobin}  [Existence of Robin solutions]
	
Let $(\Omega, \sigma)$ be a one-sided NTA pair of mixed dimension, as in Definition \ref{d:mixed}, 
with $\Omega$ bounded.
Let $A$ be a uniformly elliptic real-matrix valued function. 
For any $a > 0$ and  $\psi\in L^2(\partial \Omega, d\sigma)$ (more generally, for $\psi \in H^*$, 
the dual of $H$ in \eqref{2a8}) there exists a unique function $u\in W^{1,2}(\Omega)$ such that 
\begin{equation}\label{2n23}
\frac{1}{a}\int_{\Omega}A\nabla u\nabla \varphi +\int_{\partial \Omega}\Tr(u) \varphi d\sigma
			=\int_{\partial \Omega}\psi\varphi d\sigma
			\ \text{ for all } \varphi\in C_c^\infty(\mathbb R^n).
\end{equation}
Furthermore, $\|u\|_{W} : = \|u\|_{W^{1,2}(\Omega)} \leq C \|\psi\|_{H^\ast} 
\leq C\|\psi\|_{L^2(\partial \Omega)}.$
	\end{theorem}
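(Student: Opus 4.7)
The plan is to apply the Lax--Milgram theorem on $W = W^{1,2}(\Omega)$, exploiting the functional-analytic machinery assembled in Theorem \ref{prop:equivalence of norms}. First I would define the bilinear form
\begin{equation*}
B(u,v) = \frac{1}{a}\int_\Omega A\nabla u \cdot \nabla v \, dx + \int_{\partial\Omega} \Tr(u)\,\Tr(v)\,d\sigma
\end{equation*}
and the linear functional $L(\varphi) = \int_{\partial\Omega} \psi\,\Tr(\varphi)\,d\sigma$ (or $L(\varphi) = \langle \psi, \Tr(\varphi)\rangle_{H^*,H}$ in the more general case). Boundedness of $B$ on $W \times W$ follows from the $L^\infty$ bound on $A$ and the boundedness of the trace operator $\Tr : W \to L^2(\partial\Omega, \sigma)$ given by \eqref{2a3}; boundedness of $L$ follows from the same trace bound together with Cauchy--Schwarz (in the $H^*$ case, from the definition of the dual norm and the boundedness of $\Tr : W \to H$).

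The central point is coercivity. Since $A$ is uniformly elliptic with constant $\lambda > 0$,
\begin{equation*}
B(u,u) \geq \frac{\lambda}{a}\,\|\nabla u\|_{L^2(\Omega)}^2 + \|\Tr(u)\|_{L^2(\sigma)}^2 \geq \min\!\left(\tfrac{\lambda}{a},1\right)\|u\|_{\Tr}^2,
\end{equation*}
and the key input from Theorem \ref{prop:equivalence of norms}, namely the equivalence of norms \eqref{2a4}, upgrades this to $B(u,u) \geq c\,\|u\|_W^2$ for some constant $c > 0$ depending on $a$, $\lambda$, and the geometric constants. This is exactly the place where the Poincar\'e-type estimate of Lemma~\ref{th:GlobalPoincare3} does its work: without it one cannot control $\|u\|_{L^2(\Omega)}$ from the bilinear form alone.

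With $B$ continuous and coercive and $L$ continuous, Lax--Milgram produces a unique $u \in W^{1,2}(\Omega)$ with $B(u,\varphi) = L(\varphi)$ for every $\varphi \in W$. Testing against $\varphi \in C_c^\infty(\mathbb{R}^n) \subset W$ (recall these are in fact dense in $W$ by \eqref{2a6}) yields precisely \eqref{2n23}. Uniqueness is immediate from coercivity: if $u$ satisfies \eqref{2n23} with $\psi = 0$, then $B(u,u) = 0$ by density of $C_c^\infty$, so $u = 0$ in $W$. Finally, the announced bound is obtained by inserting $\varphi = u$ in \eqref{2n23} and using coercivity on the left and the continuity of $L$ on the right:
\begin{equation*}
c\,\|u\|_W^2 \leq B(u,u) = L(u) \leq \|\psi\|_{H^*}\,\|\Tr(u)\|_H \leq C\,\|\psi\|_{H^*}\,\|u\|_W,
\end{equation*}
and the further inequality $\|\psi\|_{H^*} \leq C\|\psi\|_{L^2(\sigma)}$ follows from $L^2(\sigma) \hookrightarrow H^*$ (which is dual to $H \hookrightarrow L^2(\sigma)$, itself a consequence of \eqref{2a8}). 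The main obstacle---really the only nontrivial point---is ensuring coercivity, and that is packaged into the norm equivalence \eqref{2a4}; everything else is a clean application of Lax--Milgram.
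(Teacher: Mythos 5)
Your proof is correct and follows essentially the same route as the paper: the same bilinear form and linear functional, coercivity via the norm equivalence \eqref{2a4}, and existence/uniqueness from Lax--Milgram (the paper invokes the Stampacchia theorem, which in this full-space setting amounts to the same thing, and also covers the non-symmetric $A$). Your added details on passing to $C_c^\infty$ test functions via density \eqref{2a6} and on $\|\psi\|_{H^\ast}\leq C\|\psi\|_{L^2(\sigma)}$ are consistent with what the paper leaves implicit.
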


Here $Tr(u) \in H \subset L^2(\sigma)$ is the trace of $u \in W$; rapidly, we will take the habit
of  abusing notation and writing $u$ instead of $\Tr(u)$ on $\d\Omega$.
In this paper, we will only apply Theorem~\ref{th:existencerobin} with data $\psi\in L^2(\d\Omega)$, but the proof is the same for $\psi \in H^*$.
In the latter case, it could happen that $\psi$ is not given by a measurable function, and then
$\int_{\partial \Omega}\psi\varphi$ is an abuse of notation for $\psi(\Tr(\varphi))$.

\begin{proof}
We consider the continuous bilinear form $F$ on $W$ given by $F(u,\varphi) 
= \frac{1}{a}\int_{\Omega} A \nabla u \nabla \varphi + \int_{\d\Omega} \Tr(u) \Tr(\varphi)$. 
The form $F$ is accretive in $W$ because of \eqref{2a4}. We also consider the 
linear form $\varphi \mapsto \int_{\d\Omega} \psi \Tr(\varphi) d\sigma$, which is 
continuous on $W$ because our trace operator maps $W$ to the Hilbert space $H$. 
The Stampacchia theorem says that there is a unique $u \in W$ such that 
$F(u,\varphi) = L(\varphi)$ for all $\varphi$. The theorem follows. 
\end{proof}

\subsection{A localization lemma}

For Lemma \ref{th:neumannsub} we will need to localize and apply Theorem~\ref{th:existencerobin}
to intermediate domains that contain a given $\Omega \cap B(y, r)$, $y \in \d\Omega$, 
and have a diameter comparable to $r$ (we will call this a tent domain). Here is the statement.

\begin{lemma}\label{l:tentspaces}
Let $(\Omega, \sigma)$ be a one-sided NTA pair of mixed dimension, as in Definition \ref{d:mixed}.
For each $y \in \partial\Omega$ 
and $0 < r \leq \diam(\Omega)$, we can find a one-sided NTA pair of mixed dimension 
$(T(y,r), \sigma_\star)$, with
\begin{equation} \label{2a9}
B(y, r)\cap \Omega \subset T(y,r) \subset B(y,Kr)\cap \Omega,
\end{equation} 
and the restriction of  $\sigma_\star$ to $\d\Omega$ is $\sigma$. Here $K \geq 1$ and the geometric
constants for $(T(y,r), \sigma_T)$ can be chosen to depend only on $n$ and the geometric constants for
$(\Omega,\sigma)$.
\end{lemma}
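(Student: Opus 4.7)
My plan is to build $T(y,r)$ by a Whitney-cube sawtooth construction, and then verify that the resulting pair is a one-sided NTA pair of mixed dimension with constants controlled by those of $(\Omega,\sigma)$. I fix a Whitney decomposition $\{Q_j\}$ of $\Omega$ satisfying $\diam(Q_j) \approx \dist(Q_j, \partial \Omega)$. Let $\mathcal{F}$ be the subcollection of those $Q_j$ that meet $B(y,r)\cap\Omega$, augmented by a uniform buffer of neighbors so the resulting region is open and so that adjacent cubes in $\mathcal{F}$ overlap slightly, providing built-in Harnack chains. Define $T(y,r) = \mathrm{int}\bigl(\bigcup_{Q \in \mathcal{F}} (1+\eta)Q\bigr)$ for a small absolute $\eta > 0$. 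The left inclusion of \eqref{2a9} is immediate, and the right inclusion with a uniform $K$ follows because each $Q \in \mathcal{F}$ has diameter $\lesssim r$ and lies within controlled distance of $B(y,r)$ through the buffer.

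The next step is to check that $T(y,r)$ is one-sided NTA with constants depending only on those of $\Omega$. Its boundary splits as $\partial T(y,r) = \Gamma_1 \cup \Gamma_2$, where $\Gamma_1 \subset \partial\Omega$ is the portion of the original boundary that remains exposed, and $\Gamma_2$ is the lateral sawtooth, consisting of (dilated) faces of Whitney cubes and hence $(n-1)$-Ahlfors regular at scales up to $\approx r$. Condition \ref{CC1} is handled by cases: surface balls centered on $\Gamma_1$ inherit interior corkscrews from $\Omega$ at a slightly reduced scale (to avoid landing outside a selected Whitney cube), while balls centered on $\Gamma_2$ admit corkscrews inside neighboring interior Whitney cubes. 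Condition \ref{CC2} follows by concatenating the Harnack chains of $\Omega$ with short local chains between adjacent Whitney cubes, which are uniformly chainable by construction.

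For the measure I would set $\sigma_\star = \sigma|_{\Gamma_1} + \mathcal{H}^{n-1}|_{\Gamma_2}$, which by construction agrees with $\sigma$ on $\partial\Omega$. Verifying the doubling \eqref{1n4} and the mixed-dimension lower bound \eqref{1n3} for $\sigma_\star$ is where the main work lies, since the two components a priori scale differently. The key observation is that a Whitney cube $Q$ touching $\Gamma_1$ has sidelength $s \approx \dist(Q, \partial\Omega)$, so on a ball of radius $\approx s$ centered near $Q \cap \Gamma_1$ the contributions $\mathcal{H}^{n-1}(\Gamma_2 \cap B(\cdot,s))$ and $\sigma(\Gamma_1 \cap B(\cdot,s))$ are related by the factor $\rho(\cdot,s) = s^{n-1}\sigma(B(\cdot,s))^{-1}$ from \eqref{2n9}; the mixed-dimension hypothesis \eqref{1n3} on $\sigma$ (which uses $d > n-2$) controls this ratio above and below by a uniform power of the scale, so that both components scale comparably across the transition between the fractal and lateral regimes. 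The main obstacle is precisely this uniform verification of \eqref{1n4} and \eqref{1n3} for $\sigma_\star$ at scales where balls straddle $\Gamma_1$ and $\Gamma_2$, requiring a careful bookkeeping of Whitney cubes and full use of the strict inequality $d > n-2$ to prevent the $(n-1)$-dimensional lateral piece from dominating in the wrong direction.
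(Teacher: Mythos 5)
Your domain construction is essentially the same Whitney/sawtooth construction the paper borrows from \cite{HM}, \cite{MP}, and the one-sided NTA verification you wave at is the part that can indeed be quoted from the literature (it is not automatic for the naive union of Whitney cubes meeting $B(y,r)$ — the fattened Whitney regions $U_Q$ are what guarantee uniform corkscrews and Harnack chains — but this is a standard, fixable point). The genuine gap is in your choice of measure. You take $\sigma_\star=\sigma|_{\Gamma_1}+\mathcal H^{n-1}|_{\Gamma_2}$ and assert that \eqref{1n3} controls the ratio $\rho(\cdot,s)=s^{n-1}\sigma(B(\cdot,s))^{-1}$ ``above and below by a uniform power of the scale''. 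It does not: \eqref{1n3} is a one-sided condition on ratios $\sigma(B(x,r))/\sigma(B(x,s))$ (the paper explicitly notes there is no inequality in the other direction), and it gives no comparison whatsoever between $\sigma(B(x,s))$ and $s^{n-1}$. Note also that your lateral boundary contains faces of arbitrarily small size accumulating on $\partial\Omega$ (near $\partial B(y,r)\cap\partial\Omega$ the selected Whitney cubes are tiny), so the transition region you worry about occurs at all small scales, not just at scale $r$.

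Concretely, suppose $\partial\Omega$ is $d$-Ahlfors regular with $n-2<d<n-1$ (allowed in Definition \ref{d:mixed}). Take a lateral face $F$ with $\diam F=\ell\ll r$ and $w\in F$; then $\dist(w,\partial\Omega)\approx\ell$, so $\sigma_\star(B(w,\ell/4))\approx\mathcal H^{n-1}(F\cap B(w,\ell/4))\approx\ell^{n-1}$, while $B(w,C\ell)$ contains a surface ball of $\partial\Omega\cap\partial T$ of radius $\approx\ell$, whence $\sigma_\star(B(w,C\ell))\gtrsim\ell^{d}\gg\ell^{n-1}$ as $\ell\to0$. A bounded number of doublings from radius $\ell/4$ to $C\ell$ then contradicts \eqref{1n4} for your $\sigma_\star$. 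A symmetric failure (the lateral piece is now too heavy relative to $\sigma$) occurs when $d>n-1$, e.g.\ for the snowflake of Example \ref{ex:koch}, where balls centered on $\Gamma_1$ at distance $\rho$ from the sawtooth have $\sigma_\star$-mass $\approx\rho^{d}$ at scale $\rho$ but $\gtrsim\rho^{n-1}$ at scale $C\rho$. This is precisely why the paper does not use the unweighted Hausdorff measure but sets, as in \eqref{2n29}, $\sigma_\star=\sigma|_{\Gamma_0}+\frac{\sigma(B(z,2\delta(z)))}{\delta(z)^{n-1}}\,\mathcal H^{n-1}|_{\Sigma}$, so that each face carries mass $\approx\sigma(B(z_F,2\delta(z_F)))$, matching the nearby surface measure; with that normalization the two regimes \eqref{2n31}--\eqref{2n32} agree at $t\approx\delta(w)$, and doubling plus \eqref{1n3} (with exponent $\min(n-1,d)$) follow — together with a bounded-overlap counting of faces via associated boundary cubes for the upper bound, which your sketch also does not address. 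Your argument can be repaired by adopting this weight, but as written the central verification fails whenever $\sigma$ is not comparable to $\mathcal H^{n-1}$.
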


We shall call $T(y,r)$ a tent domain. As before, a large $K$ may be needed here, 
because if $\Omega$ looks like a hand with long fingers and $B(y,r)$ meets two fingers, we may need to include faraway pieces of $\Omega$ that connect the two fingers.

The construction of tent domains has been a very useful tool for the study of elliptic PDEs.
Often the geometric situation is more subtle than here, as one wants to hide certain (possibly more complicated)
parts of the boundary where the control is less good, while the tent domain is nicer in some respects and better estimates are available on it. Here we just want to localize so this aspect is simpler.

We'll use a construction of tent domains that originates in \cite[Lemma 3.61]{HM}, and fortunately 
the delicate part of the construction, the verification of the NTA property for $T(y,r)$, was done there
and in subsequent papers. We cannot quote \cite{HM} directly, because it was only interested of domains 
with a boundary of co-dimension $1$, while here we'll have to pay some attention to the measure 
on the boundary because the added pieces of boundary in the construction are $(n-1)$-dimensional.

The construction of \cite{HM} was also used in \cite[Sections 4-5]{MP} and \cite[Section 9.2]{DM23}, 
where it was adapted to higher co-dimension boundaries.
The advantage is that we can do this with Ahlfors regular boundaries $\d\Omega$ of any dimension
$d \neq n-1$, and we get boundaries of mixed dimensions for $T(y,r)$.
In fact one of the reasons for the introduction of the mixed boundary concept was to be able to do this
construction and apply our results to $T(y,r)$.

In the present case we still have to say something because we want to check that we can also start the construction in the general mixed context, not necessarily Ahlfors regular. 
Fortunately the necessary adaptations will be rather easy. 



\begin{proof}
In order to avoid unnecessary (and long) repetitions, 
we shall follow the construction of \cite[Sections 4-5]{MP}, which has the advantage of being quite detailed
and already adapted to fractal boundaries $\d\Omega$ and a mixed dimensional conclusion. 
The verification that the domain  $T = T(y,r)$ constructed there (and, in fact, in \cite{HM} too)
is one-sided NTA is the same in the present situation: the proof only uses the NTA properties of $\Omega$,
and not precisions on the measure $\sigma$.
However we shall remind the reader of how the construction goes, because it  will be needed
when we construct a measure $\sigma_\star$ on $\d T$, and refer to \cite{MP} for details.

There could be a way to avoid the proof that follows by restricting our attention
to Ahlfors regular boundaries (and still using mixed dimensions when we use the localization lemma), but 
a priori we would not recommend this because mixed dimensions are interesting anyway.

First we use a collection $\bD$ of ``dyadic cubes on $\d\Omega$''. The cubes $Q \in \bD$
are in fact merely measurable subsets of $\Gamma = \d\Omega$, with the usual properties
of the usual dyadic cubes in $\R^n$, regarding size and nesting properties in particular.
See \cite[Section 3]{MP} for a review and notations.

Then we choose a scale $k_0$ of cubes, so that the cubes in $\bD_{k_0}$ (cubes of generation $k_0$ in $\bD$)
all have diameters comparable to $r$, but are significantly smaller (to make sure that each cube is contained in $B(x,2r)$, say),
and start from the union $\Delta_0$ of all the cubes of generation $k_0$ that meet $\Gamma \cap \ol B(x,r)$.
Also call $\cF$ the collection of all the other cubes of $\bD_0$ (those that do not meet $\Delta_0$)
and $\bD_{\cF}$, the family of cubes that are contained in $\Delta_0$.
This corresponds to (4.26) in \cite{MP}, but we simplify a bit because all the cubes of
$\cF$ are of the same generation: to make this text readable, we do not claim here that the general 
construction of \cite{MP} goes through in the general mixed context, but only the specific
case needed here). Also we localize slightly differently here, as if all the cubes that compose
$\Delta_0$ were contained in a single cube $Q_0$ of generation $k_0 - 1$. 
The only effect is to make some constants worse at the scales $k_0$ and $k_0 -1$.

We also use a collection $\cW$ of Whitney cubes in $\Omega$. These are (true) dyadic cubes $I$
contained in $\Omega$, with 
\begin{equation}\label{2n25}
4 \diam(I) \leq \dist(I,\Gamma) \leq 40 \diam(I)
\end{equation}
(as in (4.1) in \cite{MP}) that are maximal with this property, and they form a disjoint (except for boundaries) family that covers $\Omega$. 
For each $Q \in \bD_\cF$, we construct the Whitney region $U_Q$ of \cite[(4.24)]{MP}),
which is (the interior of) a finite union of Whitney cubes, all of roughly the same size as $\diam(Q)$, all within
$C \diam(Q)$ from $Q$, and all at distances at least $C^{-1}\diam(Q)$ from $\Gamma$.
They are chosen large so that $U_Q$ connects Whitney cubes near $Q$ to each other through thick paths 
(Harnack chains); for this it is very useful to use Whitney cubes, because they have a simple shape,
are locally boundedly many, and are well connected as soon as they are connected (i.e., through the full interior of a $(n-1)$-dimensional face). 

Then we let $T = \Omega_\cF$ be the interior of the union of all the $U_Q$, $Q \in \bD_\cF$.
The verification that $T $ is a one-sided NTA domain that satisfies \eqref{2a9} is done in \cite{MP}
(see Propositions~5.3 and 5.56 there), and does not use $\sigma$. Maybe our (unneeded) simplification here
where we say that all the cubes that compose $\Delta_0$ have just one parent $Q_0 = \Delta_0$ makes the corkscrew and 
Harnack chain constants a little bigger than needed, but this does not matter.

The boundary $\d T$ can be written as the disjoint union $\d T = \Gamma_0 \cup \Sigma$, where 
\begin{equation}\label{2n26}
\Gamma_0 = \Gamma \cap \d T \ \text{ and } 
\Sigma = \d T \sm \Gamma = \bigcup_{F} F
\end{equation}
is a union of some $(n-1)$-faces of Whitney cubes $I$, where $I$ runs along a subcollection of all the faces 
of Whitney cubes that compose the 
$U_Q$, $Q \in \bD_\cF$. Of course not all the faces occur because two adjacent cubes $I$ and $J$
often lie in the collection of Whitney cubes that compose $T$, and then their common face is not in the list. 
Note that the union in \eqref{2n26} is locally finite, because by \eqref{2n25}, $\delta(z) := \dist(z, \Gamma)$ 
is roughly constant on each face $F$ and its neighbors.
It will be useful to know that 
\begin{equation}\label{2m27}
\dist(z,\Gamma) \leq \dist(z, \Gamma_0) \leq C \dist(z,\Gamma)
\ \text{ for } z \in \Sigma.
\end{equation}
The first part is obvious, and for the second part we observe that if $z \in \Sigma$, and $F$ is one of the faces
that contain $z$, and $Q \in \bD_{\cF}$ is a pseudocube in $\Gamma$ such that $F$ is a face of one of
the  Whitney cube that compose $U_Q$, then $\dist(z, \Gamma) \sim \diam(F) \sim \diam(Q)$ 
and $\dist(F, \Gamma_0) \leq  \dist(F, Q) \leq C \diam(Q)$
by construction, and in particular the fact that each $Q \in \bD_\cF$ is contained in $\Gamma_0$; in fact,
(4.36) in \cite{MP} even says that $\Gamma_0$ is roughly equal to the union $\Delta_0$ of the 
cubes of $\bD_{\cF}$). More precisely, it says that
\begin{equation}\label{2m28}
\Gamma \sm \cup_{Q \in \cF} Q \subset \Gamma_0 = \Gamma \cap \d T \
\subset \Gamma \sm \cup_{Q \in \cF} \,  {\rm int}(Q),
\end{equation}
where the interiors are taken in $\Gamma$. 
The difference between the cubes of $\bD$ and their interior ${\rm int}(Q)$ is negligible, because
$\sigma(\ol Q \sm {\rm int}(Q)) = 0$ for all pseudocubes, by the small boundary condition.

Now we need to construct a measure $\sigma_\star$ on $\d T$, and here we need to modify the 
definition of (5.1) in \cite{MP} because maybe $\sigma$ is no longer Ahlfors regular. 
We take
\begin{equation}\label{2n29}
\sigma_\star = \sigma|_{\Gamma_0} 
+ \frac{\sigma(B(z,2\delta(z))}{\delta(z)^{n-1}} \, \mathcal H^{n-1}|_{\Sigma} 
= \sigma|_{\d T \cap \d\Omega} 
+ \frac{\sigma(B(z,2\delta(z))}{\delta(z)^{n-1}} \, \mathcal H^{n-1}|_{\Sigma} \, ,
\end{equation}
where we recall that $\delta(z) := \dist(z, \Gamma)$.
That is, we keep $\sigma$ as it was on $\Gamma_0$, and on the faces that compose $\Sigma$,
 we use the Hausdorff measure, multiplied by a weight which will help $\sigma_\star$ to be doubling.
 Notice that if $F$ is one of those faces, the weight
 $w(z) = \frac{\sigma(B(z,2\delta(z))}{\delta(z)^{n-1}}$ is roughly constant on $F$, and
 \begin{equation} \label{2n30}
\sigma_\star(F) \sim \sigma(B(z,2\delta(z)).
\end{equation}
In (5.1) in \cite{MP}, the weight was $\delta(z)^{d+1-n}$, which was roughly equivalent because
$\d\Omega$ is $d$-Ahlfors regular there.

Now we need to check that $\sigma_\star$ is doubling and satisfies \eqref{1n3}, and so we evaluate
$\sigma_\star(B(w,t))$ when $w \in \d T$ and $0 < t \leq C \diam(T) \sim r$. We claim that 
\begin{equation} \label{2n31}
\sigma_\star(B(w,t)) \sim \frac{\sigma(B(w,2\delta(w))}{\delta(w)^{n-1}} \, t^{n-1}
\ \text{ when  $t < \delta(w)/2$}
\end{equation}
(which forces $w \in \Sigma$ because $\delta(w) = 0$ on $\Gamma$) and 
\begin{equation} \label{2n32}
\sigma_\star(B(w,t)) \sim \sigma(B(w, 3t)) 
\ \text{ when $t \geq \delta(w)/2$.}
\end{equation}

The first case is the easiest, because then $B(w,t)$ does not meet $\Gamma = \d\Omega$, 
and even meets only a bounded number of faces $F$ as above, all with diameters comparable to $\delta(w)$ 
and total masses comparable to $\sigma(B(w,2\delta(w))$ by \eqref{2n30}.
The estimate follows because each face has a nice shape and $\sigma_\star$ is proportional to 
$\mathcal H^{n-1}$ there. In particular the lower bound comes because if a face $F$ shows up, it is entirely 
contained in $\Sigma$.

We may now focus on \eqref{2n32}, and we start with the upper bound. Call $B = B(w,t)$ and write 
$\sigma_\star(B) \leq \sigma(B) + \sigma_\star(B \cap \Sigma) \leq \sigma(B) 
+ \sum_{F \in \bF} \sigma_\star(B \cap F)$, 
where $\bF$ denotes the set of faces in the description of $\Sigma$ that meet $B$.
The first term $\sigma(B)$ is obviously under control, so we concentrate on the sum.
Let $F \in \bF$ be given, and let $z = z_F$ be any point of $F \cap B$. 
Recall that $\diam(F) \sim \delta(z)$, so $F \subset B(z, C \delta(z)) \subset B(w, Ct)$ because $z \in B$.
Thus, by \eqref{2n30}, it is enough to control $S = \sum_{F \in \bF} \sigma(z_F, C \delta(z))$.
We do not know for sure that these balls have bounded overlap, but fortunately we can use the definition of
$F = \Omega_\cF$ to associate cubes in $\Gamma \sm \Delta_0$ which have a bounded overlap.

Indeed $F$ lies in the boundary $\Sigma$, and this means that at least one of the Whitney cubes $I$ that touch $F$
does not belong to the family that composes $T = \cup_{Q \in \bD_F} U_Q$; since the whole series of Whitney cubes covers
$\Omega$, this means that $I$ lies in one of the $U_Q$, $Q$ contained in a cube of  $\cF$, i.e., for some 
$Q = Q(F)$ that does not meet $\Delta_0$.
In addition, $\diam(Q) \sim \diam(F)$ and $Q$ lies within $C \diam(F)$ from $F$ and also from $\Delta_0$
(this time because $F$ also lies on the boundary of some $U_R$, $R \in \bD$). 
And near the center of this cube $Q$, by the small boundary condition, there is a small surface ball 
$\Delta(Q)$ that lies at distance at least  $C^{-1}\diam(Q)$ from the exterior of $Q$ in $\Gamma$, 
and hence also from all the cubes of $\Delta_0$ (i.e., from the cubes of $\bD_\cF$). 
Altogether, $C^{-1} \diam(Q) \leq \dist(\xi, \Delta_0) \leq C \diam(Q)$ for 
$\xi \in \Delta(Q)$. Thus $\diam(F)$ essentially determines $\dist(\xi, \Delta_0)$ for $\xi \in \Delta(Q(F))$, and because of this the 
$\Delta(Q(F))$, $F \in \bF$, have bounded overlap. Since they are also contained in $B(w,Ct)$, we get that
$$
\sum_{F \in \bF} \sigma_\star(B \cap F) \leq C \sum_{F \in \bF} \sigma(\Delta(Q(F))) \leq C \sigma(B(w,Ct))
\leq C \sigma(B(w,3t))
$$
by \eqref{2n30}, the doubling property of $\sigma$, and bounded overlap (we made sure that $\ol B(w, 2t)$
meets $\Gamma$ so that we can use doubling). This proves the upper bound in \eqref{2n32}.

For the lower bound, first assume that $B(w, t/2)$ meets $\Gamma_0$.
Recall from \eqref{2m28} that this forces $B(w,t/2)$ to meet one of the cubes of $\Delta_0$, say, at
some point $z$. Let $Q$ be the largest cube of $\bD_\cF$ that contains $z$ and is contained in
$B(z,t/2) \subset B(w,t)$. Then $\diam(Q) \geq C^{-1} t$, hence
$\sigma(Q) \geq C^{-1} \sigma(B(z, t/2) \geq C^{-1} B(w,3t)$ because $\sigma$ is doubling, and 
then $\sigma_\star(B(w,t)) \geq \sigma(B(w,t) \cap \Gamma_0) \geq \sigma(Q) \geq C^{-1} B(w,3t)$
(because $Q \subset \Gamma_0$ by \eqref{2m28}), as needed.

So we may assume that $B(w, t/2)$ does not meet $\Gamma_0$, hence that $\dist(w,\Gamma_0) \geq t/2$
and, by \eqref{2m28}, $\dist(w,\Gamma) \geq t/C$. Then $w$ lies in a face $F$ of diameter larger than $t/C$.
In fact, $\diam(F) \sim t$, because for \eqref{2n32} we are in the case when $\delta(w) \leq 2t$
and $\diam(F) \leq C \delta(w)$ if $F$ contains $w$. So in fact  
$\sigma_\star(B(w,t)) \geq \sigma_\star(F \cap B(w,t)) \geq C^{-1} \sigma_\star(F) \geq C^{-1} \sigma(B(w,3t))$
by \eqref{2n30} (or the definition of $\sigma_\star$) and the doubling property.
This completes our proof of  \eqref{2n31} and \eqref{2n32}.

We may return to or main goal, the doubling properties of $\sigma$. 
The fact that $\sigma_\star$ is doubling is a direct consequence of \eqref{2n31} and \eqref{2n32},
because $\sigma$ is itself doubling, and the two formulas give roughly equivalent bounds when 
$t \sim \delta(z)$. For \eqref{1n3}, the argument is nearly as simple: we pick $w \in \d T$
and $0 < s < r  < \diam(T)$, and use \eqref{2n31} and \eqref{2n32} to compare
$\sigma_\star(B(w,r))$ with $\sigma_\star(B(w,s))$. The interesting case is when 
$s < \delta(w) < r$ (the other cases are simpler), and then 
$$
\frac{\sigma_\star(B(w,r))}{\sigma_\star(B(w,s))} \sim s^{n-1}\delta(w)^{1-n} 
\frac{\sigma(B(w,2\delta(w)))}{\sigma(B(w,3t))}
\geq C^{-1} \Big(\frac{s}{\delta(w)}\Big)^{n-1} \Big(\frac{\delta(w)}{t}\Big)^{d}
\geq C^{-1} \Big(\frac{s}{t}\Big)^{\min(n-1,d)}
$$
where $d > n-2$ is as in \eqref{1n3}. That is, $\sigma_\star$ satisfies 
\eqref{1n3} with $\widetilde d = \min(n-1,d)$.
This completes our proof of Lemma~\ref{l:tentspaces}.
	\end{proof}

\begin{remark}		
	The domains $T(y,r)$ have various names in the literature;  here we will call them tent domains.
We will later use the following  {\bf notation} concerning the two parts that compose $\d T(y,r)$: 
for $T(y,r)$ a tent domain as above, we denote 
\begin{equation}\label{2m33}
S(y,r):= \partial T(y,r)\cap \Omega \  \text{ and } \  \Gamma(y,r):= \partial T(y,r)\cap \partial \Omega.
\end{equation}
	\end{remark}

	
\subsection{More Poincar\'e inequalities}  

Here we record simple consequences of the Poincar\'e inequalities above that will be used later.
For the next one, notice that the trace condition is on the intersection of $\Omega$ with a sphere, not on $\d\Omega$.
	
A direct consequence of the above is the following:
	\begin{lemma} \label{lem:Poincare2}
Let $(\Omega, \sigma)$ be a one sided NTA pair of mixed dimension in $\R^n$.
Assume $0 \in \d\Omega$ and $0 < r < \diam(\Omega)/4$. 
There exists $p > 2$ such that for any 
$u\in W^{1,2}(\Omega)$ with $\Tr(u)=0$ on $\partial B(0,r)\cap \Omega$ the following inequality holds: 
		\begin{equation}
\left( \fint_{B(0,r)\cap \Omega} |u|^{p} \, dm \right)^{1/p} 
\leq C r \left(\fint_{B(0,r)\cap \Omega} |\nabla u|^2 \, dm \right)^{1/2}, 
		\end{equation}
where $C>0$ depends only on $p$ and the  geometric constants for $(\Omega, \sigma)$.
	\end{lemma}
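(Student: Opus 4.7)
The plan is to reduce the statement to Lemma \ref{th:Poincare1} by extending $u$ by zero outside $B(0,r)$ and then applying that lemma at a larger auxiliary scale, so that the region where the extension vanishes is admissible as the averaging set.

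First, I define $\tilde u : \Omega \to \R$ by $\tilde u = u$ on $\ol{B(0,r)} \cap \Omega$ and $\tilde u = 0$ on $\Omega \sm \ol{B(0,r)}$. Since $\d B(0,r)$ is a smooth hypersurface and $\Tr(u) = 0$ on $\d B(0,r) \cap \Omega$, a standard integration by parts against any $\varphi \in C_c^\infty(\R^n)$ (combined with the density statement \eqref{2a6}) shows that $\tilde u \in W^{1,2}(\Omega)$ with $\nabla \tilde u = \chi_{B(0,r)\cap\Omega} \nabla u$ almost everywhere.

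Next, pick a constant $L \geq 1$ depending only on the geometric constants for $(\Omega,\sigma)$ and set $R = \min(Lr, \diam(\Omega))$. I would apply Lemma \ref{th:Poincare1} at $x = 0 \in \d\Omega$ with radius $R$, taking the averaging set to be $E := (B(0, 2R) \cap \Omega) \sm \ol{B(0,r)}$. The interior corkscrew condition \ref{CC1} gives $|B(0, 2R) \cap \Omega| \geq c_1 R^n$, while trivially $|\ol{B(0,r)} \cap \Omega| \leq c_n r^n$; choosing $L$ large enough (depending only on the geometric constants), one has $|E| \geq c_0 |B(0, R) \cap \Omega|$ with $c_0$ itself a geometric constant. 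Since $\tilde u \equiv 0$ on $E$, the Poincar\'e mean of $\tilde u$ on $E$ vanishes, and Lemma \ref{th:Poincare1} yields an $L^p$--$L^2$ inequality comparing averages of $\tilde u$ on $B(0, R) \cap \Omega$ with averages of $\nabla \tilde u$ on $B(0, KR) \cap \Omega$.

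To finish, I use that $\tilde u$ and $\nabla \tilde u$ are supported in $\ol{B(0,r)} \cap \Omega$, so both integrals collapse to integrals of $u$ and $\nabla u$ over $B(0,r) \cap \Omega$. All three volumes $|B(0, r) \cap \Omega|$, $|B(0, R) \cap \Omega|$, $|B(0, KR) \cap \Omega|$ are comparable to $r^n$ (upper bounds are trivial, and lower bounds again come from \ref{CC1}), so recasting the inequality in terms of averages over $B(0,r) \cap \Omega$ only changes the constant by a geometric factor, producing the desired bound. The main (and essentially only) obstacle is securing $|E| \geq c_0 |B(0, R) \cap \Omega|$, which is precisely why we work at an auxiliary scale $R \gg r$; everything else is routine bookkeeping.
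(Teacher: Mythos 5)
Your overall strategy is the same as the paper's: extend $u$ by zero across $\partial B(0,r)\cap\Omega$ (legitimate because of the trace hypothesis) and feed the extension into Lemma \ref{th:Poincare1}, choosing the averaging set $E$ inside the region where the extension vanishes so that $\bar u=0$; the collapse of the integrals back to $B(0,r)\cap\Omega$ and the volume comparisons are indeed routine. The difference is how you secure $|E|\geq c\,|B(0,R)\cap\Omega|$, and this is where your argument has a genuine gap. You take $R=\min(Lr,\diam(\Omega))$ and argue by pure volume counting: $|B(0,2R)\cap\Omega|\geq c_1R^n$ from the corkscrew condition, minus $|\ol B(0,r)\cap\Omega|\leq \omega_n r^n$. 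This works when $R=Lr$ with $L$ large, but in the regime $Lr>\diam(\Omega)$ you are forced to take $R=\diam(\Omega)$, and then $R/r$ can be as small as just above $4$ (the hypothesis only gives $r<\diam(\Omega)/4$). Since the corkscrew constant $c_1$ is of order $M^{-n}$ (and in fact only $\sim(10M)^{-n}$ here, because the corkscrew condition is stated for radii below $\diam(\Omega)/10$), the difference $c_1R^n-\omega_n r^n$ is not positive for any reasonable $M$, so the claimed bound $|E|\geq c_0|B(0,R)\cap\Omega|$ does not follow from what you wrote for $r$ comparable to $\diam(\Omega)$.

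The missing ingredient in that regime is connectivity, not volume: you need to exhibit a definite chunk of $\Omega$ outside $\ol B(0,r)$, e.g.\ a corkscrew ball $B(A_{\diam(\Omega)/10}(P),M^{-1}\diam(\Omega)/10)$ attached to a boundary point $P$ with $|P|\geq\diam(\Omega)/2$, which is disjoint from $\ol B(0,r)$ because $r<\diam(\Omega)/4$. The paper sidesteps the case analysis entirely by staying at scale $r$: it applies Lemma \ref{th:Poincare1} with radius $r$ and $E=(B(0,2r)\cap\Omega)\sm B(0,r)$, and bounds $|E|$ from below by observing that $E$ contains a piece of a Harnack chain (equivalently, a corkscrew-type ball of radius $\simeq r$) joining a corkscrew point of $B(0,r)$ to a point of $\Omega\sm B(0,2r)$, which exists precisely because $r<\diam(\Omega)/4$. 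If you either add the far-corkscrew argument for the boundary regime or replace your volume count by this connectivity argument at scale $r$, your proof is complete.
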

	
Note that by Remark \ref{rk2} (and modulo a little bit of checking in the non Ahlfors regular case), 
we can take any $p < \frac{2n}{n-2}$.
	
	\begin{proof}
We consider the function $w$ defined by 
$w=u$ on $B(0,r)\cap \Omega$ and $w=0$ on $\Omega\setminus B(0,r)$. 
Note that $w \in W^{1,2}(\Omega)$,  
with $\nabla w = 0$ on $\Omega \sm B(0,r)$; the only dangerous places 
for the existence of a weak derivative 
would be points of $\Omega\cap \d B(0,r)$, but $w\in W^{1,2}_{loc}$ across those points 
because of our assumption on the trace of $u$. 
We apply Lemma \ref{th:Poincare1} 
to the function $w$ on the set $B(0,Kr)\cap \Omega$ 
and using $E=\big (B(0,2r) \cap \Omega \big)\setminus \big (B(0,r)\cap\Omega\big)$.
This set is not too small because it contains a piece of Harnack chain that connects
a corkscrew point for $B(0,r)$ to a point of $\Omega\sm B(0,2r)$. Also,
we don't need to integrate on a large ball because $\nabla w = 0$ on $\Omega\sm B(0,r)$.
	\end{proof}

We will also use the following direct consequence of Lemma \ref{th:Poincare3bis}.

\begin{lemma}\label{lem:Poincare3}
Let $(\Omega, \sigma)$ be a one sided NTA pair of mixed dimension in $\R^n$.
There exists $K \geq 1$,  and $C \geq 1$,
depending only on the geometric constants for $(\Omega,\sigma)$ 
such that if $0 \in \d\Omega$, $0 < r \leq \diam(\d \Omega)$, and $E \subset \partial \Omega \cap B(0,r)$ 
are such that $\sigma(E) > 0$ 
and $u\in W^{1,2}(\Omega\cap B(0, Kr))$, then
		\begin{equation} \label{e:poincaretrace}
\fint_{B(0,r)\cap \Omega} u(y)^2 \, dy 
	\leq C \frac{\sigma(B(0,r))}{\sigma(E)}  \,   r^2 \fint_{B(0,K r)\cap \Omega} |\nabla u(y)|^2 \, dy
	+ 2   \fint_{E} u^2 d\sigma .
		\end{equation}
\end{lemma}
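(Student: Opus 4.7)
\textbf{Proof plan for Lemma~\ref{lem:Poincare3}.} The strategy is to split $u$ into the $L^2$-average $\bar u := \fint_{B(0,r)\cap \Omega} u\,dy$ (plus the fluctuation) and then estimate this constant via its values on $E$ using trace Poincar\'e.

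First, since $\bar u$ is the $L^2$-minimizer of $c \mapsto \fint_{B(0,r)\cap\Omega}(u-c)^2$, we have the exact identity
\begin{equation*}
\fint_{B(0,r)\cap\Omega} u^2\,dy = \fint_{B(0,r)\cap\Omega}(u-\bar u)^2\,dy + \bar u^{\,2}.
\end{equation*}
The first term is handled directly by Lemma~\ref{th:Poincare1} (with $p=2$, taking $E=B(0,r)\cap\Omega$ itself as the averaging set), yielding a bound of the form $C r^2 \fint_{B(0,Kr)\cap\Omega}|\nabla u|^2$.

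To estimate $\bar u^{\,2}$, I would use that $\bar u$ is a constant, so $\bar u = \fint_E \bar u\,d\sigma$, and write $\bar u = \fint_E u\,d\sigma - \fint_E (u-\bar u)\,d\sigma$ on $\partial\Omega$ (abusing notation, writing $u$ for $\mathrm{Tr}(u)$). Applying $(a+b)^2\leq 2a^2+2b^2$ and Cauchy--Schwarz twice gives
\begin{equation*}
\bar u^{\,2} \leq 2\fint_E u^2\,d\sigma + 2\fint_E (u-\bar u)^2\,d\sigma.
\end{equation*}
The second term is then enlarged via
\begin{equation*}
\fint_E (u-\bar u)^2\,d\sigma \leq \frac{\sigma(B(0,r)\cap\partial\Omega)}{\sigma(E)}\,\fint_{B(0,r)\cap\partial\Omega}(u-\bar u)^2\,d\sigma,
\end{equation*}
and Lemma~\ref{th:Poincare3bis} (applied with the averaging set $B(0,r)\cap\Omega$, which is admissible since $|B(0,r)\cap\Omega| \gtrsim |B(0,r)\cap\Omega|$) bounds the right-hand side by $C \frac{\sigma(B(0,r))}{\sigma(E)} r^2 \fint_{B(0,Kr)\cap\Omega}|\nabla u|^2$. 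Combining these estimates and absorbing the pure Poincar\'e term from the first step into the (larger, since $\sigma(E)\leq\sigma(B(0,r))$) trace term gives the desired inequality \eqref{e:poincaretrace}.

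The only mild technical issue is that Lemma~\ref{th:Poincare3bis} was stated only for $r \leq K^{-1}\diam(\Omega)$, whereas here $r$ is allowed up to $\diam(\partial\Omega)$. For $r$ comparable to $\diam(\Omega)$, I would simply enlarge $K$ (or invoke Lemma~\ref{th:GlobalPoincare3} combined with the observation that $\bar u$ can then be taken to be $m(u)$, the full average on $\Omega$, and apply the same decomposition); this reduces to the same template and only inflates $C$. I expect this adjustment and the choice of $K$ in the conclusion to be the mildly annoying step, but no genuine obstacle arises.
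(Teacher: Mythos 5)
Your proposal is correct and follows essentially the same route as the paper: decompose into the solid average $\bar u$ over $B(0,r)\cap\Omega$ plus fluctuation, control the fluctuation by Lemma~\ref{th:Poincare1} with $p=2$, bound $\bar u^{\,2}$ through its values on $E$ via the elementary inequality and the enlargement $\fint_E(u-\bar u)^2\,d\sigma \le \frac{\sigma(B(0,r))}{\sigma(E)}\fint_{B(0,r)\cap\partial\Omega}(u-\bar u)^2\,d\sigma$ combined with Lemma~\ref{th:Poincare3bis}, and handle $r$ comparable to $\diam(\Omega)$ by switching to Lemma~\ref{th:GlobalPoincare3}. This is exactly the paper's argument, down to the use of the global Poincar\'e lemma for the large-radius case.
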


 \begin{proof}
Let $(\Omega, \sigma)$, $r$, and $u$ be as in the statement. 
Set $\ol u = \fint_{\Omega \cap B(x,r)} u$. Since Lemma \ref{th:Poincare1}, with $p=2$, 
gives a compatible bound on $\fint_{B(0,r)\cap \Omega} |u-\ol u|^2$, we just need to show that
$|\ol u|^2$ is bounded by the right-hand side of \eqref{e:poincaretrace}.
If in addition $r \leq K^{-1}\diam(\Omega)$, we can apply Lemmma \ref{th:Poincare3bis}, 
and \eqref{2nn} says that 
\begin{equation}\label{2m36}
\fint_{B(x,r)\cap \d\Omega} |\Tr(u)(y)- \bar u|^{2} \, d\sigma(y) 
	\leq C r^2 \fint_{B(x,Kr)\cap \Omega} |\nabla u(z)|^2 \, dz .
\end{equation}
In \eqref{e:poincaretrace} we simply  denoted $\Tr(u)$ by $u$. With this notation, since
 $|u|^2 \leq (|u| + |u-\ol u|)^2 \leq 2 u^2 + 2 |u-\ol u|^2$ on $\d\Omega$, we get that
 \begin{eqnarray}\label{2m37}
|\ol u|^2 &=& \fint_E (2 u^2 + 2 |u-\ol u|^2) d\sigma
\leq 2 \frac{\sigma(B(0,r))}{\sigma(E)} \fint_{B(0,r)} |u-\ol u|^2 d\sigma + 2  \fint_{E} u^2 d\sigma,
\end{eqnarray}
and \eqref{e:poincaretrace} follows from \eqref{2m36}. We are left with the case when $K^{-1}\diam(\Omega) \leq r \leq \diam(\Omega)$, 
but in this case we can replace Lemmma \ref{th:Poincare3bis} with Lemma \ref{th:GlobalPoincare3}.
\end{proof}

		\begin{corollary}\label{th:Poincare0}
Let $(\Omega, \sigma)$ be a one sided NTA pair of mixed dimension in $\R^n$, with $\Omega$ bounded.
Let $\Gamma \subset \partial \Omega$ be a measurable set 
such that $\sigma(\Gamma) > 0$. 
Then 
for $u\in W^{1,2}(\Omega)$ such that $\mathrm{Tr}(u)|_{\Gamma} = 0$, 
$$  
\fint_{ \Omega} |u|^2 \leq C \frac{\sigma(\d\Omega)}{\sigma(\Gamma)} \diam(\Omega)^2  
\fint_{\Omega} |\nabla u|^2,
$$
with a constant $C$ that depends only on the geometric constants for $(\Omega, \sigma)$.
\end{corollary}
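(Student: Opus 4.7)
The strategy is to derive this from the global Poincar\'e inequality of Lemma~\ref{th:GlobalPoincare3} together with the vanishing trace condition on $\Gamma$. Set $m(u) = \fint_\Omega u$. Lemma~\ref{th:GlobalPoincare3} gives us simultaneously two estimates with the same right-hand side $C\diam(\Omega)^2 \fint_\Omega |\nabla u|^2$: one controlling $\fint_\Omega |u-m(u)|^2$, and one controlling $\fint_{\d\Omega}|\Tr(u)-m(u)|^2 \, d\sigma$. The first one gives most of what we need, since $\fint_\Omega u^2 \leq 2\fint_\Omega|u-m(u)|^2 + 2\,m(u)^2$; the task then reduces to controlling the constant $m(u)^2$.

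This is where the hypothesis $\Tr(u)|_\Gamma = 0$ enters. On $\Gamma$ the constant $m(u)$ coincides with $m(u) - \Tr(u)$, so
\begin{equation*}
\sigma(\Gamma)\,m(u)^2 = \int_\Gamma |m(u)-\Tr(u)|^2 \, d\sigma
\leq \int_{\d\Omega} |\Tr(u)-m(u)|^2 \, d\sigma
\leq C\,\sigma(\d\Omega)\,\diam(\Omega)^2 \fint_\Omega |\nabla u|^2,
\end{equation*}
using Lemma~\ref{th:GlobalPoincare3} at the last step. Dividing by $\sigma(\Gamma)$ produces
\begin{equation*}
m(u)^2 \leq C\,\frac{\sigma(\d\Omega)}{\sigma(\Gamma)}\,\diam(\Omega)^2 \fint_\Omega |\nabla u|^2.
\end{equation*}

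Combining this with $\fint_\Omega u^2 \leq 2\fint_\Omega |u-m(u)|^2 + 2 m(u)^2$ and the bound on the first term from Lemma~\ref{th:GlobalPoincare3} yields the desired inequality (the factor $\sigma(\d\Omega)/\sigma(\Gamma) \geq 1$ absorbs the unweighted term). There is no real obstacle: the whole point of Lemma~\ref{th:GlobalPoincare3} having been stated with both the interior and trace pieces is precisely so that a vanishing-trace-on-a-subset Poincar\'e inequality drops out in a single step. The only small verification is that $\sigma(\d\Omega) < \infty$ (which holds since $\Omega$ is bounded and $\sigma$ is doubling), so that averages on $\d\Omega$ are meaningful.
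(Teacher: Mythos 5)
Your proof is correct. The paper's own proof is shorter: it simply invokes Lemma \ref{lem:Poincare3} with $r=\diam(\Omega)$ and $E=\Gamma$, and notes that the term $\fint_E u^2\,d\sigma$ vanishes because $\Tr(u)=0$ on $\Gamma$. What you have done is essentially to unpack that lemma at the global scale: its proof in the regime $r\sim\diam(\Omega)$ also splits $u$ into the mean $m(u)$ plus the oscillation, controls both the interior oscillation and the boundary deviation $\fint_{\d\Omega}|\Tr(u)-m(u)|^2\,d\sigma$ by Lemma \ref{th:GlobalPoincare3}, and then uses the vanishing trace on $\Gamma$ to bound $m(u)^2$ with the factor $\sigma(\d\Omega)/\sigma(\Gamma)$ --- exactly your computation, including the observation that this factor is at least $1$ and so absorbs the unweighted oscillation term. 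So the two arguments are mathematically the same; yours bypasses the intermediate Lemma \ref{lem:Poincare3} and argues directly from the global Poincar\'e inequality, at the cost of writing out a few lines the paper had already packaged.
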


\begin{proof}
Apply Lemma \ref{lem:Poincare3} to $\Omega$, with $r= \diam(\Omega)$ and $E=\Gamma$, and observe that 
$\int_E u^2 d\sigma$ vanishes.
\end{proof}

\subsection{Partial Neumann Data} Later, in order to build barriers, it will be useful to 
find weak solutions with Neumann data on some part of $\d \Omega$ and Dirichlet 
condition on the rest of $\d \Omega$.

\begin{theorem}\label{th:existencerobin2} [Existence of partial Neumann-data solutions]
	Let $(\Omega, \sigma)$ be a one sided NTA pair of mixed dimension in $\R^n$, with $\Omega$ bounded, and let $A$ be a uniformly elliptic real matrix valued function. 
Given any  $\Sigma \subset \partial \Omega$ such that $\sigma(\Sigma) > 0$ 
and any $\psi\in L^2(\partial \Omega, d\sigma)$, there exists a unique 
function $u\in W^{1,2}(\Omega)$ with $\mathrm{Tr}(u)|_{\Sigma} = 0$ such that
		\begin{equation}\label{2a30}
			\int_{\Omega}A\nabla u\nabla \varphi=\int_{\partial \Omega}\psi\varphi d\sigma
\ \quad \text{ for all $\varphi \in W^{1,2}(\Omega)$ such that $\Tr(\varphi) |_{\Sigma} = 0$.}
		\end{equation}
	\end{theorem}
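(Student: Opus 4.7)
The plan is a straightforward application of Lax--Milgram on the appropriate closed subspace of $W^{1,2}(\Omega)$. Let
\[
V := \{u \in W^{1,2}(\Omega) \, : \, \mathrm{Tr}(u)|_{\Sigma} = 0\}.
\]
First I would observe that $V$ is a closed subspace of the Hilbert space $W^{1,2}(\Omega)$: this is because the trace operator $\mathrm{Tr} : W^{1,2}(\Omega) \to L^2(\partial\Omega,\sigma)$ is bounded by \eqref{2a3}, so the map $u \mapsto \mathbf{1}_\Sigma \mathrm{Tr}(u) \in L^2(\sigma)$ is continuous, and $V$ is its kernel. Hence $V$ is itself a Hilbert space under the induced norm.

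Next, I would introduce the bilinear form $B(u,v) := \int_\Omega A \nabla u \cdot \nabla v$ and the linear form $L(\varphi) := \int_{\partial\Omega} \psi \varphi \, d\sigma$, both on $V$. The continuity of $B$ follows from the upper ellipticity bound on $A$, and the continuity of $L$ from Cauchy--Schwarz together with the boundedness of $\mathrm{Tr}$ (hence $|L(\varphi)| \leq \|\psi\|_{L^2(\sigma)} \|\mathrm{Tr}(\varphi)\|_{L^2(\sigma)} \leq C \|\psi\|_{L^2(\sigma)} \|\varphi\|_W$).

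The key step, and the main (though mild) obstacle, is coercivity of $B$ on $V$. By ellipticity,
\[
B(u,u) \geq \lambda \int_\Omega |\nabla u|^2 \, dx.
\]
Since $\mathrm{Tr}(u)|_\Sigma = 0$ and $\sigma(\Sigma) > 0$, Corollary \ref{th:Poincare0} applies and gives
\[
\int_\Omega |u|^2 \, dx \leq C \frac{\sigma(\partial\Omega)}{\sigma(\Sigma)} \mathrm{diam}(\Omega)^2 \int_\Omega |\nabla u|^2 \, dx,
\]
so
\[
\int_\Omega |\nabla u|^2 \geq c(\Omega,\Sigma) \|u\|_{W^{1,2}(\Omega)}^2,
\]
where $c(\Omega,\Sigma) > 0$ depends on $\sigma(\Sigma)$, $\sigma(\partial\Omega)$, $\mathrm{diam}(\Omega)$, and the geometric constants of $(\Omega,\sigma)$. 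This is exactly the coercivity we need (and it is here that the hypothesis $\sigma(\Sigma) > 0$ is essential, precisely as in the pure Dirichlet problem).

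With $B$ bounded, coercive, and $L$ continuous on the Hilbert space $V$, the Lax--Milgram theorem yields a unique $u \in V$ with $B(u,\varphi) = L(\varphi)$ for every $\varphi \in V$. This is exactly \eqref{2a30}, since the test functions in \eqref{2a30} are precisely the elements of $V$. Finally, if desired, the quantitative bound $\|u\|_{W^{1,2}(\Omega)} \leq C \|\psi\|_{L^2(\sigma)}$ follows from testing against $u$ itself and using coercivity combined with the trace bound.
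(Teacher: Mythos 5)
Your proposal is correct and follows essentially the same route as the paper: restrict to the closed subspace of $W^{1,2}(\Omega)$ functions with vanishing trace on $\Sigma$, use the bilinear form $\int_\Omega A\nabla u\nabla\varphi$ (coercive on that subspace precisely by Corollary \ref{th:Poincare0}, which is where $\sigma(\Sigma)>0$ enters), and conclude by Lax--Milgram (the paper invokes Stampacchia, which for a bounded coercive bilinear form on a Hilbert space is the same statement and, like Lax--Milgram, requires no symmetry of $A$).
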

	
	\begin{proof}
	For the existence and uniqueness of a weak solution, we can proceed as for Theorem~\ref{th:existencerobin},
with the subspace $W_0 = \big\{ u \in W^{1,2}(\Omega) \, ; \, \Tr(u) = 0 \text{ on } \Sigma \big\}$,
which is closed in $W$ because $\Tr : W \to L^2$ is continuous. We use the bilinear form 
$F(u,\varphi) = \int_{\Omega} A \nabla u \nabla \varphi$, which is accretive because
$F(u,u) = \int_\Omega |\nabla u|^2 \simeq ||u||_W^2$ for $u \in W_0$ by Corollary \ref{th:Poincare0}
(and we don't care that the constant depends on $\diam(\Omega)$).
We keep the same continuous linear form $\varphi \mapsto \int_{\d\Omega} \psi \Tr(\varphi) d\sigma$,
and the Stampacchia theorem says that there is a unique $u \in W_0$ such that 
$F(u,\varphi) = L(\varphi)$ for all $\varphi$, as needed. 
	\end{proof}

\begin{remark} \label{rk4}
Here, logically, the values of $\psi$ on $\Sigma$ do not matter. 
As before, we could also have taken any $\psi$ on $\d \Omega$ that defines a continuous linear form on 
the trace of $W_0$, with $\int_{\partial \Omega}\psi\varphi d\sigma$ replaced by
$\psi(\Tr(\varphi))$ for $\varphi \in W_0$.  On nice domains, $\psi=\partial_{\nu_A} u= \nu\cdot A \nabla u$ on $\partial\Omega\setminus\Sigma$, a traditional conormal derivative, and $u=0$ on $\Sigma$.
\end{remark}

	\section{H\"older Continuity at the Boundary for the Neumann problem}\label{s:neumannosc}
	Our goal over the next two sections is to prove boundedness and H\"older continuity for functions with H\"older continuous Robin boundary data. Throughout the section we assume that $0\in\partial\Omega$.
	
In this section we prove a Harnack inequality at the boundary for the Neumann problem, 
which, in domains of this generality, we do not believe to appear elsewhere in the literature. 
	
	\begin{theorem}
		\label{th:neumannharnack}
	Let $(\Omega, \sigma)$ be a one sided NTA pair of mixed dimension in $\R^n$, with $\Omega$ bounded. 
	There exists $K \geq 1$, that depends only on the geometric constants for $(\Omega, \sigma)$,
and $\theta >0$, depending on those constants and the ellipticity constant for $A$, with the following property.
		Let $w\in W^{1,2}(\Omega\cap B(0,Kr))$, $0\leq w$, satisfy 
		\begin{align}
			\label{e:neumann}
			\int_{\Omega\cap B(0,Kr)}A\nabla w\nabla \varphi=0
		\end{align}
		for all $\varphi\in W^{1,2}(\Omega\cap B(0,Kr))$ such that 
$\varphi \equiv 0$ on $\Omega \sm B(0,\rho)$ for some $\rho < Kr$
(see Remark~\ref{rem:tracevszero}). 
 
	Then 
	 \begin{equation}\label{e:neumannharnack}
			\inf_{\Omega \cap B(0,r)} w \geq \theta \sup_{\Omega \cap B(0,r)} w.
		\end{equation} 
	\end{theorem}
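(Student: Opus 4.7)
The plan is to adapt the classical De Giorgi--Nash--Moser iteration to the boundary setting of a one-sided NTA pair of mixed dimension. The crucial observation is that under Neumann-type conditions, test functions need not vanish on $\d\Omega$; they only need to be supported in $\Omega \cap B(0, \rho)$ for some $\rho < Kr$. Consequently, $\d\Omega$ acts as an ``invisible interface'' from the perspective of the weak formulation, and boundary-type Harnack bounds follow from essentially interior arguments, using cutoffs $\eta \in C_c^\infty(\R^n)$ with $\eta$ imposing no condition on $\d\Omega$.

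First I would establish Caccioppoli inequalities. For a smooth cutoff $\eta$ supported in $B(0, \rho)$ with $\rho < Kr$ and $\delta > 0$, testing \eqref{e:neumann} against $\varphi = \eta^2 (w+\delta)^\beta$ (first truncating $w$ at a large level and then passing to the limit) and using the uniform ellipticity of $A$ with Cauchy--Schwarz yields
\[
\int_{\Omega} \eta^2 \,\bigl|\nabla (w+\delta)^{(\beta+1)/2}\bigr|^2 \leq C(\beta) \int_{\Omega} (w+\delta)^{\beta+1} \,|\nabla \eta|^2,
\]
valid for $\beta \neq -1$. Combining this with the Sobolev-Poincar\'e improvement \eqref{2a7} and Lemma~\ref{th:Poincare1} yields a reverse H\"older inequality on balls contained in $\Omega \cap B(0, Kr)$, where the constant $K$ is precisely the enlargement factor required by Lemma~\ref{th:Poincare1} to absorb the interior Harnack-chain connectivity.

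Iterating this reverse H\"older inequality in the usual Moser style produces, for any $p > 0$,
\[
\sup_{\Omega \cap B(0, r)} w \leq C_p \left(\fint_{\Omega \cap B(0, Kr)} w^p\right)^{1/p},
\]
while running the scheme with negative exponents $\beta < -1$ gives
\[
\left(\fint_{\Omega \cap B(0, Kr)} w^{-p}\right)^{-1/p} \leq C_p \inf_{\Omega \cap B(0, r)} w.
\]
To bridge the positive and negative moment estimates, I would test \eqref{e:neumann} against $\varphi = \eta^2/(w+\delta)$ to derive a BMO-type bound on $\log(w+\delta)$. Together with the John--Nirenberg inequality in the doubling Poincar\'e space $(\Omega, \mathrm{Leb})$ (whose hypotheses follow from the corkscrew and Harnack-chain conditions plus Lemma~\ref{th:Poincare1}), this produces some $p_0 > 0$ for which $\fint w^{p_0}$ and $(\fint w^{-p_0})^{-1}$ are comparable on $\Omega \cap B(0, Kr)$. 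Chaining these estimates yields \eqref{e:neumannharnack}, and the limit $\delta \to 0$ removes the regularization.

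The main obstacle I anticipate is the careful geometric bookkeeping, and in particular the control on the enlargement factor $K$. Since the Sobolev-Poincar\'e inequality and the John--Nirenberg argument both require connecting arbitrary points of $\Omega \cap B(0, r)$ to one another via corkscrew balls and Harnack chains, and since fingers of $\Omega$ entering $B(0, r)$ may only communicate through long detours in $\Omega$, the enlargement encoded by $K$ is unavoidable. A secondary subtlety is verifying that $(w+\delta)^\beta$ and $\log(w+\delta)$ lie in the appropriate Sobolev spaces (handled by truncation and standard density arguments) and tracking that all constants depend only on the ellipticity of $A$ and the geometric constants of $(\Omega, \sigma)$, so that $\theta$ has no other dependence, as asserted.
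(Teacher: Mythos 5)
Your proposal is correct in outline, but it follows a genuinely different route from the paper for the lower bound. The sup estimate is the same Moser iteration as the paper's Lemma \ref{lem:neumannMoser} (note that the Sobolev step there really rests on Lemma \ref{lem:Poincare2}, the version for functions with vanishing trace on $\partial B\cap\Omega$, rather than on Lemma \ref{th:Poincare1} directly, since your cutoffs do not vanish on $\partial\Omega$). For the infimum, you run the negative-exponent iteration and bridge the two sides via a BMO bound on $\log w$ and John--Nirenberg on the doubling space given by Lebesgue measure restricted to $\Omega$; the paper instead avoids John--Nirenberg altogether: it first shows (Lemma \ref{l:CSmax}, via a non-standard growth/contradiction argument on $r^{-n}\int_{B(0,r)\cap\Omega}u^2$) that the supremum over the ball is controlled by the value at a corkscrew point, and then proves a density lemma (Lemma \ref{lem:neummdensity}) --- if $w\ge 1$ on a fixed volume fraction of $\Omega\cap B(0,2r)$, then $\inf_{\Omega\cap B(0,r)}w\ge c_\eta$ --- whose proof needs only the Moser bound applied to $(-\log w)^+$ together with the Poincar\'e inequality exploiting the large zero set; interior Harnack chains then finish the argument. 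Your route buys the full classical weak Harnack package (comparability of small positive and negative moments), but it requires carrying out the metric-space machinery in this rough setting: doubling of the restricted Lebesgue measure, oscillation bounds for $\log w$ on balls centered at \emph{arbitrary} nearby boundary points (so Lemma \ref{th:Poincare1} must be invoked at all such points, with the dilation $K$ forcing the Caccioppoli cutoffs to remain inside $B(0,Kr)$, which is why the final $K$ must be taken large), and John--Nirenberg on that space. All of these ingredients are available from the paper's Section \ref{s:TEP}, so your approach works; the paper's corkscrew-point lemma plus density lemma is simply leaner here because it sidesteps John--Nirenberg and the negative-moment iteration entirely.
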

 
 \begin{remark}\label{rem:tracevszero}
We have not defined yet 
what it means to be locally a weak solution of Robin or Neumann problem on the domain $\Omega\cap B$, where $B$ is a ball centered on the boundary. 
There are a number of (likely equivalent) options,
but we want to avoid having to discuss the values, or the trace, of $W^{1,2}$ functions on 
$\Omega \cap \d B$, because $\Omega \cap B$ may not be a ``good" domain (for the trace/extension).
Here we decided to choose the one with 
test function space to be $\varphi \in W^{1,2}(\Omega \cap B)$ such that $\varphi \equiv 0$ on $\Omega \sm \mathcal{K}$ for some compact $\mathcal{K}\subset \subset B$.
It may sound a little awkward, but it is a completely secure way to ensure that $\varphi$ vanishes on 
$\Omega \cap \d B$, and we will never need to use the subtle difference between $B(x,Kr)$ in the theorem
and a slightly smaller ball. That is, $\varphi$ looks like something that we may add to $w$, and
want the sum to have the same values as $w$ outside of the ball.

In practice, we often deal with situations where we can actually define traces for our functions $w$ and 
$\varphi$, for instance when $B$ is replaced by one of our tent domains $T(x,R)$, or more brutally when
$\Omega \subset B$, but the definition above will not harm.
 
 Now we could also have required \eqref{e:neumann} only for all smooth functions 
 $\varphi \in C^{\infty}(\R^n)$, with compact support in $B(x,Kr)$, but this would have been equivalent,
 because if $\varphi\in W^{1,2}(\Omega\cap B(0,Kr))$ vanishes
 on $\Omega \sm B(0,\rho)$ for some $\rho < Kr$, we can use the fact that $\Omega$ is an extension
 domain to extend $\varphi$ to $\R^n$, then approximate the extension by smooth functions 
 in $W^{1,2}(\R^n)$, and even cut off the approximations so that they vanish outside of a compact 
 subset of $B(0,Kr)$. 
 Then we can deduce \eqref{e:neumann} for $\varphi$ from the same property for the approximations,
 because $w \in W^{1,2}(\Omega\cap B(0,Kr))$.
 
 \end{remark}

	The first step of the proof is a Moser type estimate for positive Neumann subsolutions:

\begin{lemma}[Moser for Neumann Subsolutions] 

		\label{lem:neumannMoser}  
	Let $w\in W^{1,2}(\Omega\cap B(0,2r))$, $0\leq w$, satisfy
		\begin{align}
			\label{e:neumannsub}
			\int_{\Omega\cap B(0,2r)}A\nabla w\nabla \varphi
			\leq \tau \int_{\partial\Omega\cap B(0,2r)}\varphi d\sigma
		\end{align}
		for all $\varphi\in W^{1,2}(\Omega\cap B(0,2r))$ 
	such that $\varphi\geq 0$ and $\varphi \equiv 0$ on $\Omega \sm B(0,\rho)$ for some $\rho < 2r$
(see Remark \ref{rem:tracevszero}), 	and some constant $\tau \geq 0$.
	Then 
		\begin{align}
			\label{e:moser}
			\sup_{\Omega\cap B(0,r)}w\leq C \tau r^{2-n} \sigma(B(0,r))  
			+ C\left(\fint_{\Omega\cap B(0,2r)} w^2\right)^{\frac{1}{2}},
		\end{align}
		where $C$ depends only on the geometric constants of $\Omega$ and on the ellipticity of $A$.
	\end{lemma}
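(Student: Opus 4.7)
My plan is to run a Moser iteration, but first I will reduce to a homogeneous Robin-type subsolution by a shift. Set $k := \tau r^{2-n}\sigma(B(0,r))$ and $\bar w := w+k$. Since $w\geq 0$ we have $\bar w \geq k$ everywhere, so on $\partial\Omega$ the pointwise bound $\tau \leq \beta\,\bar w$ holds with $\beta := 1/(r^{2-n}\sigma(B(0,r)))$, and \eqref{e:neumannsub} becomes the Robin-type subsolution inequality
\begin{equation*}
\int A\nabla\bar w\cdot\nabla\varphi \leq \beta\int_{\partial\Omega}\bar w\,\varphi\,d\sigma
\end{equation*}
for nonnegative test functions $\varphi$ in the class of Remark~\ref{rem:tracevszero}. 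If I can prove the homogeneous Moser bound $\sup_{\Omega\cap B(0,r)}\bar w \leq C\bigl(\fint_{\Omega\cap B(0,2r)}\bar w^2\bigr)^{1/2}$, then unraveling the shift and using $(\fint\bar w^2)^{1/2}\leq(\fint w^2)^{1/2}+k$ will deliver \eqref{e:moser}.

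To extract a Caccioppoli estimate I will test the above inequality with $\varphi=\eta^2\bar w_M^{\alpha}$, where $\bar w_M:=\min(\bar w,M)$ is a truncation that keeps $\varphi\in W^{1,2}$ and $\eta$ is a cutoff equal to $1$ on $B(0,r_1)$ and supported in $B(0,r_2)$, with $r\leq r_1<r_2\leq 2r$. Expanding, using ellipticity and Young's inequality, and sending $M\to\infty$, I expect to obtain, with $v:=\bar w^{(\alpha+1)/2}$,
\begin{equation*}
\int \eta^2|\nabla v|^2 \leq C\int|\nabla\eta|^2 v^2 + C(\alpha+1)\,\beta\int_{\partial\Omega}\eta^2 v^2\,d\sigma.
\end{equation*}

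The decisive step will be controlling the boundary integral. A direct use of the trace estimate \eqref{2n18} produces, after multiplication by $\beta$, a coefficient of order one in front of $\int|\nabla u|^2$, which cannot be absorbed on the left of the Caccioppoli bound. To create room for absorption I will apply \eqref{2n18} on a covering of $\partial\Omega\cap B(0,r_2)$ by balls $B(x_i,\rho)$ centered on $\partial\Omega$, with a \emph{free} radius $\rho=\theta r$, $\theta\in(0,1]$. Using the dimension bound \eqref{1n3} in the upper form $\sigma(B(x_i,\rho))\leq C(\rho/r)^{d}\sigma(B(0,r))$ together with doubling, summing with bounded overlap, and picking the averaging set in Lemma~\ref{th:Poincare3bis} so that $|\bar u_i|^2\lesssim \rho^{-n}\int u^2$, I expect to obtain for $u\in W^{1,2}(\Omega\cap B(0,Kr))$
\begin{equation*}
\beta\int_{\partial\Omega\cap B(0,r_2)}u^2\,d\sigma \leq C\theta^{d-n} r^{-2}\int_{\Omega\cap B(0,Kr)} u^2 + C\theta^{2-n+d}\int_{\Omega\cap B(0,Kr)}|\nabla u|^2.
\end{equation*}
The crucial point is that $2-n+d>0$ by Definition~\ref{d:mixed}. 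Applying this to $u=\eta v$, using $|\nabla(\eta v)|^2\leq 2\eta^2|\nabla v|^2+2v^2|\nabla\eta|^2$, and choosing $\theta$ with $C(\alpha+1)\theta^{2-n+d}\leq 1/4$, the gradient contribution of the boundary term can be absorbed into the left-hand side of Caccioppoli, at the price of a factor $(\alpha+1)^{2/(2-n+d)}/r^2$ on the $L^2$ term. The net clean Caccioppoli will be
\begin{equation*}
\int\eta^2|\nabla v|^2 \leq C(\alpha+1)^{\gamma}\int\bigl(|\nabla\eta|^2+r^{-2}\bigr)v^2,\qquad \gamma:=\tfrac{2}{2-n+d}.
\end{equation*}

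The final step will be to feed this into the self-improving Sobolev--Poincar\'e estimate of Lemma~\ref{lem:Poincare2} (which gives $L^{p^{\ast}}$ control with some $p^{\ast}>2$ depending only on the geometric data) and iterate on nested balls $r_k=r+r/2^k$ with cutoffs $|\nabla\eta_k|\lesssim 2^k/r$. This will produce the familiar reverse-H\"older recursion
\begin{equation*}
\|\bar w\|_{L^{q_{k+1}}(B(0,r_{k+1}))}\leq (C4^k q_k^{\gamma})^{1/q_k}\,\|\bar w\|_{L^{q_k}(B(0,r_k))},\qquad q_{k+1}=\tfrac{p^{\ast}}{2}q_k,\ q_0=2.
\end{equation*}
Since $q_k$ grows geometrically, $\sum_k\log(C4^k q_k^{\gamma})/q_k$ converges, and iterating yields $\sup_{\Omega\cap B(0,r)}\bar w\leq C(\fint_{\Omega\cap B(0,2r)}\bar w^2)^{1/2}$, which unravels to \eqref{e:moser}. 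The entire argument hinges on the multi-scale covering trick in the third step: it is the strict inequality $d>n-2$ from the mixed-dimension hypothesis that gives the positive exponent $2-n+d$ needed to make the gradient contribution of the boundary term small, converting what would otherwise be a structural obstruction into an absorption.
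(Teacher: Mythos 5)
Your plan is correct, and its overall architecture is the same as the paper's: the additive shift by $k\sim \tau r^{2-n}\sigma(B(0,r))$ (the paper performs it after rescaling to $r=1$ and normalizing the mass of $\sigma$, taking $k=C_1\tau$ there), a power-function Caccioppoli estimate obtained from truncated test functions, the Sobolev--Poincar\'e inequality of Lemma \ref{lem:Poincare2}, and a standard Moser recursion whose constants grow only polynomially in the exponent. Where you genuinely diverge is at the crux, the absorption of the boundary term. The paper bounds $\int_\Gamma (U\varphi)^2\,d\sigma$ by the trace/Poincar\'e estimate at the full (normalized) scale, so the boundary term reappears as $Ck^{-1}\tau\int_D|\nabla(U\varphi)|^2$ and is absorbed by choosing the shift constant $C_1$ large; as recorded in \eqref{3a8} this is done with an $\alpha$-independent coefficient, the factor $(\alpha+1)$ visible in the preceding display having been suppressed. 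Your argument keeps that factor and instead manufactures smallness geometrically: covering $\partial\Omega\cap B(0,r_2)$ by balls of radius $\theta r$, and combining Lemma \ref{th:Poincare3bis} (via \eqref{2n18}) with doubling, the corkscrew condition, bounded overlap, and the lower-dimension bound \eqref{1n3}, you obtain the $\epsilon$--$C_\epsilon$ trace inequality
\begin{equation*}
\beta\int_{\partial\Omega\cap B(0,r_2)}u^2\,d\sigma \leq C\theta^{\,2-n+d}\int_{\Omega\cap B(0,2r)}|\nabla u|^2 + C\theta^{\,d-n}r^{-2}\int_{\Omega\cap B(0,2r)}u^2 ,
\end{equation*}
whose usefulness rests precisely on $d>n-2$; choosing $\theta\sim(\alpha+1)^{-1/(2-n+d)}$ absorbs the gradient part and costs only the polynomial factor $(\alpha+1)^{2/(2-n+d)}$, which the iteration tolerates since $\sum_k q_k^{-1}\log(C4^k q_k^{\gamma})<\infty$. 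I checked the exponent bookkeeping ($\beta\,\sigma(B(x_i,\theta r))(\theta r)^{2-n}\lesssim\theta^{2-n+d}$ and $\beta\,\sigma(B(x_i,\theta r))(\theta r)^{-n}\lesssim\theta^{d-n}r^{-2}$) and it is right. So your route buys a fully explicit treatment of the $\alpha$-dependence of the boundary term, at the price of slightly worse (still summable) iteration constants and the need to ensure the enlarged balls $B(x_i,K\theta r)$ stay inside $B(0,2r)$, which your nested radii provide. The remaining points you elide (the truncation $\bar w_M$ so the test functions lie in $W^{1,2}$ and the passage $M\to\infty$ level by level, and applying the covering trace estimate to the truncated quantity so all terms are finite before absorbing) are routine and handled exactly as in the paper.
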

\begin{remark}\label{rem:robinisneumannsub}
We added a right-hand side in \eqref{e:neumannsub}, which will be convenient in Section \ref{sec:density}. This is the weak formulation of the condition $\partial_\nu u \leq \tau$.
Notice that if $u$ is a non-negative weak solution to the Robin problem in the sense of \eqref{weakform} 
then $u$ is also a weak subsolution to the Neumann problem in the sense of Lemma \ref{lem:neumannMoser}, 
with $\tau = a\sup f^+$. Note that since we used smooth functions in \eqref{weakform} and
$W^{1,2}$ functions in \eqref{e:neumannsub}, we need to know that smooth functions are dense in
$W^{1,2}(\Omega)$, so that the analogue of \eqref{weakform} also holds with  $W^{1,2}$ test functions. 
Fortunately this is true, because $\Omega$ is an extension domain.
So we will be able to apply the lemma to Robin weak solutions.
	\end{remark}

	\begin{proof}
	Note that our estimate is nicely invariant. 
	For one thing, multiplying $\sigma$ by $\lambda > 0$ (and $\tau$ by $\lambda^{-1}$)
	does not change the estimate. As for dilations, if 
	$w$ satisfy \eqref{e:neumannsub} - an easy computation 
	shows that the function $w_r$ defined by $w_r(y)=w(ry)$ satisfies
  $$
  \int_{\Omega\cap B(0,2)}A\nabla w_r\nabla \varphi
			\leq C \tau r^{2-n}  
			\int_{\partial\Omega\cap B(0,2)}\varphi d\sigma^\sharp
  $$
  for test functions $\varphi$ compactly supported in $B(0,2)$, and $\sigma^\sharp$
 the pushforward measure (with the same total mass $\sigma(B(0,2r)$).
  In particular, if we can prove \eqref{e:moser} for $r=1$ and a measure $\sigma$ with total mass comparable to $1$, this scaling argument shows that it holds for all $r$. Inequality \eqref{e:moser} was proved in \cite{Kim} under the assumption that $\Omega$ supports endpoint 
Sobolev and trace embeddings. Kim's proof can be modified to fit our setting, but we choose to give an alternative argument which is (heavily) inspired by the proof of \cite[Lemma~3.1]{LS}, using our Lemma \ref{lem:Poincare2} instead of the Friedrichs inequality and Sobolev embedding.

We will proceed as often done for this type of results and intend to iterate some basic estimate. 
Write $B = B(0,2)$, $D = \Omega \cap B$, and $\Gamma = \partial \Omega\cap B$. 
We may as well assume that $\tau > 0$ (otherwise use $\tau > 0$ and let it tend to $0$ in \eqref{e:moser}).
For $L > 0$ large, define $v = v_L$ by
\begin{equation} \label{3a5}
		v(x) = v_L(x)=\begin{cases}
				w(x) + k \,\,\,\text{ if }0<w(x)<L, \\
				L+ k \quad\quad\text{ otherwise,}
			\end{cases}
		\end{equation}
where the constant $k > 0$ will be chosen later.
Notice that $0 < v \leq L+k$. Let $\varphi \in C_0^\infty (B(0,2))$, $\varphi \geq 0$. 
For every $\alpha\geq 1$, consider  $h = v^{\alpha-1}(w+k)\varphi^2$.
Notice that $h \in W^{1,2}(D)$ because $v$ is bounded, and
$\nabla h = (\alpha-1) [v^{\alpha-2} \nabla v] (w+k) \varphi^2 
+ v^{\alpha-1}  \nabla w \varphi^2
+ 2 \varphi v^{\alpha-1}(w+k) \nabla \varphi$
lies in $L^2(D)$ too, because the first term vanishes when $w > L$.
So $h$ is a valid test function in \eqref{e:neumannsub} and therefore 
\begin{equation} \label{3a6}
\int_D A\nabla w \nabla h \leq \tau \int_\Gamma h
= \tau\int_{\Gamma} v^{\alpha-1}(w+k) \varphi^2.
\end{equation} 
Notice that $A\nabla w \nabla v = A\nabla v \nabla v$ because 
$\nabla v = 0$ almost everywhere on $\{ w > L \}$, so
\begin{equation*}
\begin{aligned}
A\nabla w \nabla h - 2 \varphi v^{\alpha-1}(w+k) A\nabla w \nabla \varphi
	=  (\alpha -1)v^{\alpha-2} (w+k) \varphi^2 A\nabla v \nabla v 
	+  v^{\alpha-1}  \varphi^2 A\nabla w \nabla w
\\  \geq  (\alpha -1)v^{\alpha-1}\varphi^2 A\nabla v \nabla v
+ v^{\alpha-1}  \varphi^2 A\nabla w \nabla w
\end{aligned}
\end{equation*}
because $w+k \geq v$ pointwise. Then by ellipticity and \eqref{3a6},
\begin{multline*}
(\alpha-1) \int_{D} v^{\alpha-1}\varphi^2|\nabla v|^2 +\int_{D}v^{\alpha-1}\varphi^2 |\nabla w|^2 \\
\leq C (\alpha -1)\int_D v^{\alpha-2} (w+k) \varphi^2 A\nabla v \nabla v 
	+  C v^{\alpha-1}  \varphi^2 A\nabla w \nabla w
	\\
\leq C \int_D \varphi v^{\alpha-1}(w+k)  |\nabla w| \, |\nabla \varphi| 
	+ C \tau \int_{\Gamma} v^{\alpha-1}(w+k) \varphi^2.
\end{multline*}
Here and below, it is important that $C$ does not depend on $\alpha$, which will take large values at the end of the argument.

Next $\int_D \varphi v^{\alpha-1}(w+k)  |\nabla w| \, |\nabla \varphi| 
\leq \big\{\int_D \varphi^2 v^{\alpha-1}  |\nabla w|^2  \big\}^{1/2}
 \big\{\int_D v^{\alpha-1}(w+k)^2   |\nabla \varphi|^2 \big\}^{1/2}$; 
using $ab \leq \epsilon a^2 + \frac{1}{\epsilon}b^2$ we can hide the first integral in the
right-hand side and get that
		\begin{equation}\label{3a7}
	(\alpha-1) \int_{D} v^{\alpha-1}\varphi^2|\nabla v|^2 
		+\frac{1}{2}\int_{D}v^{\alpha-1}\varphi^2 |\nabla w|^2  
	\leq C \int_{D} v^{\alpha-1} (w+k)^2 |\nabla \varphi|^2
	+ C \tau \int_{\Gamma} v^{\alpha-1}(w+k)\varphi^2 .
		\end{equation}  
		
		Let $U = v^{\frac{\alpha -1}{2}}(w+k)$ ; 
then we can compute that on $D$,
\begin{equation*}
\begin{aligned}
|\nabla (U\varphi)| &\leq U |\nabla \varphi| + 
\varphi \frac{\alpha-1}{2} \,   v^{\frac{\alpha -3}{2}} (w+k) |\nabla v| 
+ \varphi v^{\frac{\alpha -1}{2}}|\nabla w|
\\
&\leq U |\nabla \varphi| + 
\varphi \frac{\alpha-1}{2} \,   v^{\frac{\alpha -1}{2}} |\nabla v| 
+ \varphi v^{\frac{\alpha -1}{2}}|\nabla w|,
\end{aligned} 
\end{equation*}  
because $w+k \leq v$. Then by \eqref{3a7}
$$\begin{aligned}
\int_{D}|\nabla (U\varphi)|^2 &\leq C \int_{D}U^2 |\nabla \varphi|^2 
+  (\alpha-1)^2 \varphi^2 v^{\alpha-1}|\nabla v|^2 +  \varphi^2 v^{\alpha-1} |\nabla w|^2
	\\
	&\leq  C(\alpha+1)
	\left(\int_{D}U^2 |\nabla \varphi|^2  + \tau \int_{\Gamma} (w+k)^{-1}(U\varphi)^2 d\sigma
	\right).
\end{aligned}$$ 
Recall that $w\geq 0$; we obtain 
	\begin{equation}\label{3a8}
		\int_{D}|\nabla (U\varphi)|^2
	\leq C (\alpha + 1)\int_{D}(U |\nabla \varphi|)^2 
	+ C k^{-1} \tau \int_\Gamma (U\varphi)^2 d\sigma .
		\end{equation}
Next we wish to replace $\int_\Gamma (U\varphi)^2 d\sigma$ with an integral on $D$.
Set $f = U\varphi$; this function lies in $W^{1,2}(D)$ (because $v$ is bounded),
and it is compactly supported in $B$ (because of $\varphi)$; the trace inequality and the Poincar\'e inequality in Lemma \ref{lem:Poincare2} (which we can apply because $f$ is compactly supported in $B$) give 
\begin{equation}\label{3b9}
\fint_\Gamma  f^2 d\sigma \leq C \sigma(B)  \fint_D |\nabla f|^2
\leq C \fint_D |\nabla f|^2
\end{equation}
(recall that we normalized $\sigma(B)$ out). 
We return to \eqref{3a8} and get that 
\begin{equation*}
\int_{D}|\nabla (U\varphi)|^2
	\leq C (\alpha + 1)\int_{D}(U |\nabla \varphi|)^2 
	+ C k^{-1} \tau \int_D |\nabla (U\varphi)|^2 d\sigma .
\end{equation*}
We may now choose $k = C_1 \tau$, where $C_1$ is so large that we can absorb the
last term in the left-hand side, and get that 
\begin{equation}\label{3b12}
\int_{D}|\nabla (U\varphi)|^2 \leq C (\alpha + 1)\int_{D}(U |\nabla \varphi|)^2.
\end{equation}
Using now the Poincar\'e inequality given in Lemma \ref{lem:Poincare2}, 
we obtain that there is some $p>2$ such that:
		\begin{equation*}
	\left ( \fint_{D}|U\varphi|^p\right)^\frac{2}{p}
	\leq C \fint_D |\nabla (U\varphi)|^2 
		\leq C (\alpha+1) \, \fint_{D} U^2 \, |\nabla \varphi|^2,
		\end{equation*}
where $C$ is a constant that now depends on the geometric constants of $\Omega$ and the ellipticity of $A$ (but not $\alpha$).  Again Lemma \ref{lem:Poincare2} tells us that we 
should integrate the gradient on a larger domain, but here $U \varphi$ is compactly supported in $D$.

We continue now as in the standard Moser iteration argument.
Choose an exponent $p > 2$ that works in the Poincar\'e inequality,  set 
$\gamma = \frac{p}{2}$, and choose the successive exponents 
$\alpha_m = 2 \gamma^m -1$ and $\beta_m = 1+ \alpha_m = 2 \gamma^m$, $m \geq 0$. 
Also define the radii $r_m = (1 + 2^{-m}) $, $m \geq 0$, and
set $B_m = B(0, r_m)$, $D_m = \Omega \cap B_m$, and $\Gamma_m = \d \Omega \cap B_m$.
Note that $\alpha_0 = 1$, $D_0 = D$, $\Gamma_0 = \Gamma$, and $r_m > 1$ for all $m$.

Choose a test function $\varphi_m$ compactly supported in $B_m$, such that $\varphi_m = 1$ 
on $B_{m+1}$, $\varphi_m\geq 0$, and $|\nabla \varphi_m| \leq C 2^m $.
Then apply the argument above, but in the slightly smaller ball $B_m$ and
with the exponent $\alpha_m$. The same proof as above, with
$U_m = v^{\frac{\alpha_m-1}{2}} (w+k)$,  yields
\begin{equation*}
\left ( \fint_{D_{m+1}}U_m^p\right)^\frac{2}{p}
	\leq C \left ( \fint_{D_m}|U_m \varphi_m|^p\right)^\frac{2}{p}
	\leq C (\alpha_m+1) 2^{2m} \fint_{D_m} U_m^2.
		\end{equation*}
These functions depend on the large cut-off parameter $L$ from \eqref{3a5}, 
but we may now let $L$ tend to $+\infty$.
Then $U_m$ tends to $(w+k)^{\frac{(\alpha_m+1)}{2}}$ and by
the monotone convergence theorem,
\begin{equation*}
\left ( \int_{D_{m+1}}(w+k)^{(\alpha_m+1) \frac{p}{2}}\right)^\frac{2}{p}
		\leq C (\alpha_m+1) 2^{2m} \int_{D_m} (w+k)^{\alpha_m+1},
		\end{equation*}
	where it is implied that the left-hand side is finite when the right-hand side is finite.
Set $a_m = \Big\{\fint_{D_m} (w+k)^{\beta_m}\Big\}^{1/\beta_m}$ and 
recall that $\beta_m = \alpha_m+1 = 2 \gamma^m$ and 
$\frac{p}{2}(\alpha_m+1) = \beta_{m+1}$; we just proved that
\begin{equation}\label{3b13}
a_{m+1}^{\beta_m} \leq C (\alpha_m+1) 2^{2m} a_m^{\beta_m}.
\end{equation}
Since we start with $a_0 =\Big\{ \fint_D (w+k)^2 \Big\}^{1/2} < +\infty$,
all these norms are finite. We take logarithms and get that 
\begin{equation}\label{3b14}
\log(a_{m+1}) \leq \log(a_m) +  \frac{1}{\beta_m} \big[\log(C) + \log(\beta_m) + 2m \log(2) \big].
\end{equation}
Notice that $\sum_{m} \frac{1}{\beta_m} \big[C + \log(\beta_m) + 2m \log(2) \big] \leq C$ because 
$\beta_m = 2 \gamma^m$, so $\limsup_{m \to +\infty} \log(a_m) \leq \log(a_0) + C$ and 
\begin{equation}\label{3b15}
||w+k||_{L^{\infty}(\Omega \cap B(0,r))} \leq \limsup_{m \to +\infty} a_m \leq C a_0 
= C\Big\{ \fint_D (w+k)^2 \Big\}^{1/2} \leq  C \Big\{ \fint_D (w+k)^2 \Big\}^{1/2} + C k.
\end{equation}
But we took $k = C_1 \tau$, so \eqref{e:moser} and Lemma \ref{lem:neumannMoser} follow from this.
\end{proof}

	To prove our Harnack inequality it will be useful to show that for positive $A$-harmonic functions with non-positive Neumann data, the value of the function at a corkscrew point is comparable to the supremum of the function on the whole ball.  We remind the reader that this proof is substantially different from the existing proofs in the Dirichlet boundary case. 
	\begin{lemma}\label{l:CSmax}
		There exist constants $K$ (depending the geometric constants of $ \Omega$) and $C$ (depending also on the ellipticity of $A$) such that if $u\geq 0$ satisfies $-\mathrm{div}\left(A\nabla u\right)=0$ in $\Omega\cap B(0,Kr)$ and 
		\begin{align}\label{3b16}
			\int_{\Omega\cap B(0,Kr)}A\nabla u\nabla \varphi\leq 0
		\end{align}
for all $\varphi\in W^{1,2}(\Omega\cap B(0,Kr))$ such that $\varphi \geq 0$ and 
$\varphi \equiv 0$ on $\Omega \sm B(0,\rho)$ for some $\rho < Kr$
(see Remark \ref{rem:tracevszero}), then \begin{equation}\label{e:CSmax}
			u(Y) \leq Cu(A_{r/4}(0)),\qquad \text{ for all } Y \in \Omega \cap B(0,r/4).
		\end{equation}
	\end{lemma}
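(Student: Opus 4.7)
\textbf{The plan is} to combine the Moser-type estimate of Lemma \ref{lem:neumannMoser} (which crucially uses the Neumann-subsolution hypothesis \eqref{3b16}) with an interior Harnack-chain argument that exploits the fact that $u$ is $A$-harmonic throughout $\Omega\cap B(0,Kr)$.

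\emph{Step 1 (an $L^p$ Moser bound).} First, I would upgrade Lemma \ref{lem:neumannMoser} to an $L^p$-to-$L^\infty$ version: for every fixed $p>0$ there exists $C_p$, depending on $p$ together with the ellipticity and the geometric constants, such that
\begin{equation*}
\sup_{\Omega\cap B(0,r/4)} u \leq C_p \left(\fint_{\Omega\cap B(0,r/2)} u^p\right)^{1/p}.
\end{equation*}
This is a standard self-improvement of the Moser iteration that proves Lemma \ref{lem:neumannMoser}: running the iteration starting from the exponent $p$ instead of $2$ (using test functions of the form $u^{\alpha}\eta^2$ with $\alpha$ tuned to the starting exponent), the same Caccioppoli/Sobolev/truncation machinery produces the $L^\infty$ bound with an $L^p$ integral on the right-hand side, with a constant that depends on $p$ but is still finite.

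\emph{Step 2 (bounding $\fint u^p$ by $u(A_{r/4}(0))^p$).} The crucial step is the $L^p$ weak-Harnack estimate
\begin{equation*}
\left(\fint_{\Omega\cap B(0,r/2)} u^p\right)^{1/p} \leq C\, u(A_{r/4}(0)),
\end{equation*}
which holds for any $p>0$ small enough in terms of the geometric and ellipticity constants. To prove it, decompose $\Omega\cap B(0,r/2)$ into Whitney-type balls $B_i=B(X_i,c_i)$ with $c_i\sim\dist(X_i,\partial\Omega)$; since $u$ is $A$-harmonic in the interior, the classical interior Harnack inequality gives $\sup_{B_i}u\leq C_H u(X_i)$. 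The Harnack-chain condition \ref{CC2} produces a chain of length $N_i\lesssim \log_2(r/c_i)$ connecting $X_i$ to $A_{r/4}(0)$, and iterating interior Harnack along it yields
\begin{equation*}
u(X_i)\leq C_H^{N_i}\,u(A_{r/4}(0))\leq \left(\tfrac{r}{c_i}\right)^{\alpha}u(A_{r/4}(0))
\end{equation*}
with $\alpha>0$ determined by $C_H$ and the constant in \ref{CC2}. Grouping the balls by scale $c_i\sim 2^{-k}r$ and using the doubling property \eqref{1n4} together with the lower-dimension estimate \eqref{1n3} to count the Whitney balls in each shell leads to a geometric series in $2^{k(\alpha p-(n-d))}$, where $d$ is the exponent in \eqref{1n3}. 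Since $d<n$ (forced by the corkscrew condition \ref{CC1}, which provides positive-density interior points at every scale), taking $p$ small enough that $\alpha p<n-d$ makes the sum converge with a uniform constant.

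Combining Steps 1 and 2 with the same exponent $p$ yields $\sup_{\Omega\cap B(0,r/4)} u\leq C_p\cdot C\, u(A_{r/4}(0))$, which is \eqref{e:CSmax}. The main obstacle is Step 2, and in particular the uniform counting of Whitney balls across scales in the mixed-dimensional setting: one must leverage both the doubling \eqref{1n4} and the lower-dimension bound \eqref{1n3} so that the geometric series converges for some admissible $p>0$, with all constants (including the Harnack-chain exponent $\alpha$) depending only on the geometric and ellipticity data. This is where the mixed-dimension hypothesis does its real work.
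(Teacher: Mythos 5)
Your overall route (Moser $L^p\!\to\!L^\infty$ bound from the Neumann-subsolution hypothesis \eqref{3b16}, plus a weak-Harnack bound $\big(\fint_{\Omega\cap B(0,r/2)}u^p\big)^{1/p}\le C\,u(A_{r/4}(0))$ obtained from a Whitney decomposition and interior Harnack chains) is genuinely different from the paper's argument, which proceeds by contradiction: it first records $u(X)\le c_2\,\delta(X)^{-A}$ via Harnack chains, and then shows that if the normalized mass $\rho^{-n}\int_{B(0,\rho)\cap\Omega}u^2$ were too large it would have to grow geometrically on slightly larger balls (Poincar\'e plus Caccioppoli), contradicting $u\in W^{1,2}$. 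The paper's scheme deliberately avoids any counting of boundary cubes, and that is exactly where your proposal has a real gap. In Step 2 you claim that doubling \eqref{1n4} together with the lower-dimension bound \eqref{1n3} gives at most $\sim 2^{kd}$ Whitney balls of radius $\sim 2^{-k}r$ in $B(0,r/2)$, so that the series behaves like $\sum_k 2^{k(\alpha p-(n-d))}$. But \eqref{1n3} is a \emph{lower} bound on the growth of $\sigma$ (it says the dimension of $\sigma$ is at least $d$), and the paper explicitly assumes no inequality in the opposite direction; doubling alone only yields $N_k\lesssim C_{\mathrm{doub}}^{\,k}$, whose exponent has nothing to do with $n-d$ and need not even be $<n$. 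So in the mixed-dimension setting of the lemma, the bound $N_k\lesssim 2^{kd}$ (equivalently, upper Ahlfors regularity of exponent $d$) is simply not among the hypotheses, and with only the trivial volume count $N_k\lesssim 2^{kn}$ your series diverges for every $p>0$. Your parenthetical ``$d<n$ forced by \ref{CC1}'' conflates the paper's remark about the Ahlfors-regular case with the general mixed case.

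The gap is fixable, but the hypothesis doing the work is the interior corkscrew condition \ref{CC1}, not \eqref{1n3}: corkscrew makes $\partial\Omega$ porous, hence $|\{x\in B(0,r):\dist(x,\partial\Omega)\le 2^{-k}r\}|\le C\,2^{-k\epsilon_0}r^n$ for some $\epsilon_0=\epsilon_0(n,M)>0$, i.e. $N_k\lesssim 2^{k(n-\epsilon_0)}$; then you choose $p$ with $\alpha p<\epsilon_0$, where $\alpha$ is your Harnack-chain exponent. Two smaller points. First, the $L^p$-to-$L^\infty$ Moser bound for small $p$ is true but is not obtained by ``running the iteration starting from $p$'' (the test-function computation in Lemma \ref{lem:neumannMoser} needs $\alpha\ge 1$); the standard derivation is the interpolation/absorption trick $\fint u^2\le(\sup u)^{2-p}\fint u^p$ between nested balls, which does adapt to the boundary setting here with a constant polynomial in the inverse gap of radii. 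Second, like the paper's own Claim inside its proof, your chaining implicitly assumes the Harnack chains between points of $B(0,r/2)$ and $A_{r/4}(0)$ stay inside $\Omega\cap B(0,Kr)$ where $u$ is defined; this is the role of the large constant $K$ and is no worse than the paper's usage, but it deserves a sentence. With these repairs your argument closes, at the price of a covering/porosity lemma that the paper's mass-growth contradiction argument never needs.
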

	
	Here $A_{r/4}(0)$ is a corkscrew point for $B(0, r/4)$, i.e., a point of $\Omega \cap B(0,r/4)$
such that $B(A_{r/4}(0), C^{-1}r) \subset \Omega$, as in \ref{CC1}; the precise choice does not matter, since the values of $u$ at all such points are comparable by the (interior) Harnack inequality.

The reader may be surprised that $K$ and $C$ also depend on $\sigma$, through the doubling and
asymptotic dimension $> n-2$ estimates, even though the estimate we prove do not involve $\sigma$.
But the existence of an appropriate $\sigma$ is an information that we believe we use earlier. This will happen again, although often $\sigma$ appears in the weak definition of the Robin boundary condition.

	\begin{proof}
	By dilating and scaling as above, 
we can assume that $r= 1$ and $u(A_{1/4}(0)) = 1$. 
We want to prove:
		\begin{equation}\label{e:aux1}
			u(X) \leq M \text{ for } X \in \Omega \cap B(0,1/4).
		\end{equation}
		
		Here $M$ is a potentially large constant whose value may vary from line to line but depends only on the geometric constants of $\Omega$. By Lemma \ref{lem:neumannMoser} with $\tau=0$, we only need to prove that 
		\begin{equation} \label{e:aux2}
			\fint_{B(0,1/2)} |u(X)|^2 \leq M^2.
		\end{equation}
		
		The claim below follows easily by repeated applications of the (interior) Harnack inequality, using the Harnack chain condition \ref{CC2}. 
		\begin{claim*}
			With the assumptions above, 
			\begin{equation}\label{e:aux3}
	c_1 \delta(X)^{A} \leq u(X) \leq c_2 \delta(X)^{-A}  \text{ for } X \in \Omega \cap B(0,1).
			\end{equation}
Here and below, $\delta(X) = dist(X, \partial\Omega)$, and $A \geq 1$ is a large constant that depends on the geometry.
		\end{claim*}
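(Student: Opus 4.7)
The plan is to produce the two-sided bound on $u$ by running the interior Harnack inequality along a Harnack chain joining $X$ to the corkscrew point $A_{1/4}(0)$. First, I note that although the hypothesis \eqref{3b16} is one-sided on the boundary, the equation $-\mathrm{div}(A \nabla u) = 0$ holds in the strong weak sense on the interior of $\Omega \cap B(0,K)$: for any $\varphi \in C^\infty_c(\Omega \cap B(0,K))$, both $\varphi$ and $-\varphi$ lie in the admissible test class (they vanish near $\partial \Omega$), so the variational inequality collapses to an equality. Consequently, the classical De Giorgi–Nash–Moser interior Harnack inequality applies: there is a constant $H = H(n,\text{ellipticity})$ such that whenever $B(z,2s) \subset \Omega$, $\sup_{B(z,s)} u \leq H \inf_{B(z,s)} u$.

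Next, fix $X \in \Omega \cap B(0,1/4)$ (the other cases being handled by the same argument at the cost of adjusting constants). Set $\epsilon = \min(\delta(X), \delta(A_{1/4}(0)))$ and $\varrho = |X - A_{1/4}(0)| \leq 1$. Since $\delta(A_{1/4}(0)) \geq M^{-1}/4$ and we only care about the regime $\delta(X) \ll 1$, we have $\epsilon \geq c \delta(X)$. Choose $k$ as the smallest integer with $2^k \epsilon \geq \varrho$; then
$$
k \leq C_0 + \log_2(1/\delta(X))
$$
for some $C_0$ depending only on the geometry. Applying the Harnack chain hypothesis \ref{CC2} produces balls $B_1, \dots, B_{Mk} \subset \Omega$ with centers $x_i$ satisfying $x_1 = X$, $x_{Mk} = A_{1/4}(0)$, $x_{i+1} \in B_i$, and $r(B_i) \leq \tfrac{1}{10}\delta(x_i)$. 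In particular $2B_i \subset \Omega$, so interior Harnack on each $B_i$ gives $u(x_i) \leq H\, u(x_{i+1})$ and $u(x_{i+1}) \leq H\, u(x_i)$.

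Iterating along the chain yields $H^{-Mk} u(A_{1/4}(0)) \leq u(X) \leq H^{Mk} u(A_{1/4}(0))$. Since $u(A_{1/4}(0)) = 1$ by our normalization, and $H^{Mk} \leq H^{MC_0} \, 2^{Mk \log_2 H}$, setting $A := M \log_2 H$ gives
$$
c_1\, \delta(X)^{A} \leq u(X) \leq c_2\, \delta(X)^{-A}
$$
with $c_1, c_2$ depending only on the geometric constants and the ellipticity of $A$. For $X$ with $\delta(X)$ bounded below, the chain has bounded length and the estimate is immediate. I do not expect a serious obstacle here — the argument is purely a bookkeeping of how many Harnack steps are needed to reach a point of depth $\delta(X)$ — the only delicate point is verifying that \ref{CC2} applies with the right choice of $\epsilon$ and counting $k$ correctly, which is routine once one adopts the exponent $A = M\log_2 H$.
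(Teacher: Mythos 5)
Your argument is correct and follows essentially the same route as the paper's (one-line) proof: repeatedly apply the interior Harnack inequality along a Harnack chain from \ref{CC2} of length $O(\log(1/\delta(X)))$ joining $X$ to the corkscrew point $A_{1/4}(0)$, where $u=1$ after normalization, which yields the bound with $A \simeq M\log_2 H$. One small remark: your claim that the variational inequality \eqref{3b16} collapses to an equality for interior test functions does not work as stated (the admissible class requires $\varphi \geq 0$, so $-\varphi$ is not admissible), but this is harmless since the interior equation $-\mathrm{div}(A\nabla u)=0$ is already assumed in Lemma \ref{l:CSmax}, so the interior Harnack inequality applies directly.
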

		
		We want to prove \eqref{e:aux1} by contradiction. To this end, consider the quantity
		\begin{equation} \label{e:mass}
			m(r) = r^{-n} \int_{B(0,r) \cap \Omega} u^2.
		\end{equation} 
		We will show that if it is very large for $r=1/2$, say, then it is much larger for slightly bigger $r$, so that, after some iterations, $\int_{B(0,1)} u^2$ will be infinite, contradicting $u\in W^{1,2}$. We fix $M$ large (to be chosen later),  
let $1/2 \leq r \leq 1$, and assume $m(r) \geq C_0 M^2$, where $C_0$ will chosen later, 
depending only on $n$. 
		We first consider the region 
		\begin{equation} \label{e:largeregion}
			E(r) = \big\{ X \in \Omega \cap B(0,r) \, ; \, u(X) \geq M \big\}.
		\end{equation} 
		We take $X \in E(r)$; by \eqref{e:aux3},  $M\leq u(X) \leq c_2 \delta(X)^{-A} $, hence
		\begin{equation} \label{2c6}
			\delta(X) \leq c_2^{-1/A} M^{-1/A} =: C_1 M^{-1/A}.
		\end{equation}
		That is to say that $u$ can be very large only very close to the boundary, at a scale roughly $M^{-1/A}$. We now choose $\rho =  C_2 M^{-1/A}$, with $C_2$ large enough (to be chosen soon), and consider a corkscrew point $\xi_\rho = \xi_\rho(X)$ for $B(X,\rho)$. We notice that
		$u(z) \leq M/2$ for $z\in B(\xi_\rho, \rho/C)$, again by \eqref{e:aux3}. Because of this,
		\begin{equation} \label{2c7}
			\begin{split}
				\int_{B(X,\rho)} \int_{B(X,\rho)} |u(z)-u(x)|^2 dxdz
				\geq \int_{E(r) \cap B(X,\rho)} \int_{B(\xi_\rho,\rho/C)} |u(z)-u(x)|^2 \,dxdz
				\\
				\geq C^{-1} \rho^{n} \int_{E(r) \cap B(X,\rho)} u^2\,dx
			\end{split}
		\end{equation}
		because $|u(z)-u(x)|^2 \geq (|u(x)|-M/2)^2 \geq u^2(x)/4$, $x\in E(r) \cap B(X,\rho)$,
		by the definition \eqref{e:largeregion}  of $E(r)$. And then, by this and an application of the Poincar\'e inequality Lemma~\ref{th:Poincare1} on our domain (sketch of the proof: connect $z$ and $x$ by a Harnack chain, write $u(z)-u(x)$ as a telescoping sum of differences of averages, use Lemma~\ref{th:Poincare1} on each term, integrate over $x$ and $z$ and sum up),
		\begin{equation} \label{2c8}
			\int_{E(r) \cap B(X,\rho)}  u^2 
			\leq C \rho^{-n} \int_{B(X,\rho)} \int_{B(X,\rho)} |u(z)-u(x)|^2 dxdz
			\leq C \rho^2 \int_{B(X,C\rho)} |\nabla u|^2.
		\end{equation}
		Now we cover $E(r) \subset B(0,r)$ by balls of radius $\rho$, with a bounded covering number, 
		and find that
		\begin{equation}\label{e:aux4}
			\int_{E(r)} |u|^2 \leq C \rho^{2} \int_{B(0,r+C\rho)} |\nabla u|^2.
		\end{equation}
		We now need a Caccioppoli inequality. Pick $\rho_1>C\rho$, to be determined later.
		We claim that 
		\begin{equation}\label{e:aux5}
			\int_{B(0,r+C\rho)} |\nabla u|^2 \leq C \rho_1^{-2} \int_{B(0,r+C\rho + \rho_1)} u^2,
		\end{equation}
		where as usual $C$ depend only on the geometry. As usual for Caccioppoli-type arguments, to see this, we take a cut-off function $\theta$,
		with compact support in $B_1 = B(0, r+C\rho + \rho_1)$, so that
		$0 \leq \theta \leq 1$ everywhere, $\theta = 1$ on $B_0 = B(0, r+C\rho)$, and
		$|\nabla \theta| \leq 2 \rho_1^{-1}$. Then we apply \eqref{3b16} to the function
		$\varphi = \theta^2 u$; we get that
		\begin{align*}
			\int_{\Omega\cap B_1}A\nabla u [\theta^2 \nabla u + 2 \theta u \nabla \varphi] \leq 0,
		\end{align*}
		hence by ellipticity 
		\begin{align*}
		\int_{\Omega\cap B_1} \theta^2 |\nabla u|^2 
		& \leq C\int_{\Omega\cap B_1}\theta^2 A\nabla u \nabla u 
		\leq 2C \int_{\Omega\cap B_1} \theta u |\nabla u| |\nabla \varphi| 
		\leq 4C \rho_1^{-1} \int_{\Omega \cap B_1} \theta u |\nabla u|
		\\
		& \leq 4C \rho_1^{-1} \Big\{ \int_{\Omega \cap B_1 \sm B_0} u^2 \Big\}^{1/2}
		\Big\{ \int_{\Omega \cap B_1} \theta^2 |\nabla u|^2  \Big\}^{1/2};
		\end{align*}
		\eqref{e:aux5} follows after simplifying, since $\theta = 1$ on $B_0$.
		Combining with the previous inequality, we obtain:
		\begin{equation}\label{e:aux6}
			\int_{E(r)}  u^2 \leq C \rho^2 \rho_1^{-2} \int_{B(0,r+2 \rho_1)} u^2.
		\end{equation}
		Recall that $m(r) \geq C_0 M^2$,
	so $\int_{B(0,r)}  u^2 \geq C_0 r^{n} M^2$. The contribution of $B(0,r) \sm E(r)$ to this integral
		is at most $M^2 |B(0,r)| = c_n r^n M^2$, where $c_n$ is the volume of the unit ball. We 
		choose $C_0 = 2c_n$. Then we also get that $\int_{E(r)} u^2 \geq \frac12 \int_{B(0,r)} u^2$, 
		and \eqref{e:aux6} yields
		\begin{equation}\label{e:aux7}
			\int_{B(0,r)}  u^2 \leq C \rho^2 \rho_1^{-2} \int_{B(0,r+2 \rho_1)} u^2.
		\end{equation}
		Recall that $\rho =  C_2 M^{-1/A}$, and we now choose
	$\rho_1 =  M^{-1/(2A)} \gg \rho$ (if $M$ is large enough) and observe that $r\approx r+2\rho_1$. This yields
		\begin{equation}\label{e:aux8}
	m(r+2\rho_1)  \geq C^{-1} \rho^{-2} \rho_1^{2} \,\frac{r^2}{(r+\rho_1)^2} \, m(r)
			\geq \widetilde{C}^{-1} M^{1/A} m(r) \geq 2 m(r)
		\end{equation}
		if $M$ is large enough. We started with the assumption that 
		$m(1/2) \geq C_0 M^2$, say, and we want to apply the computation above repeatedly.
		This leads to choosing $r_0 = 1/2$, and by induction
		$r_{k+1} = r_{k} (1+2 M_k^{-1/(2A)})$, where $M_k = 2^k M$.
		The estimate above  works as long as we keep $r_k < 1$. 
		Since $r_{k+1} \leq r_k + 2 M_k^{-1/(2A)} r_k
		\leq r_k + 2 M_k^{-1/(2A)} \leq  r_k + 2 M^{-1/(2A)} 2^{-k/(2A)}$, this is ensured by taking $M$ large enough (depending on the value of $\sum_k 2^{-k/(2A)}$). Iterating \eqref{e:aux8}, we finally obtain $m(1)=+\infty$, which is the contradiction we were seeking.
	\end{proof}

 For the sake of being self-contained, we remind the reader that the positive part of a weak Robin or Neumann subsolution is also a weak Neumann subsolution, even for boundary value problems in rough geometry.

	\begin{lemma}\label{l:subsolformax}
Let $B = B(0,r)$ be centered on $\d\Omega$ and let $w \in W^{1,2}(\Omega \cap B)$
a Robin subsolution in the sense that 
\begin{align}
	\label{3c31}
\int_{\Omega\cap B}A\nabla w\nabla \varphi 
+ \beta \int_{\partial \Omega \cap B} w\varphi d\sigma \leq 0 
		\end{align}
for all $\varphi\in W^{1,2}(\Omega\cap B)$ such that $\varphi\geq 0$ on $B$ and $\varphi \equiv 0$ 
on $\Omega \sm \mathcal{K}$ 
for some compact $\mathcal{K}\subset \subset B$ (see Remark \ref{rem:tracevszero}).
	If $\beta \geq 0$, then the positive part, $w^+$,  is also in $W^{1,2}(\Omega\cap B)$ and satisfies $$\int_{\Omega\cap B} A\nabla w^+ \nabla \varphi \leq 0,$$ for all $\varphi$ as above. 
\end{lemma}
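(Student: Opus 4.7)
The plan is to use the standard Stampacchia truncation trick, but executed at the level of weak formulations so that we never have to differentiate a discontinuous function. First, because $t \mapsto t^+ := \max(t,0)$ is Lipschitz and vanishes at $0$, the usual chain rule for Sobolev functions (which applies in $W^{1,2}(\Omega \cap B)$ since $\Omega$ is an extension domain, cf.\ \eqref{2a5}) gives $w^+ \in W^{1,2}(\Omega \cap B)$ with
\[
\nabla w^+ = \mathbf{1}_{\{w > 0\}} \nabla w \quad \text{a.e.\ on } \Omega \cap B,
\]
and $\Tr(w^+) = (\Tr w)^+$ on $\d\Omega \cap B$ (by the same composition principle applied to the bounded trace operator of Theorem~\ref{prop:equivalence of norms}).

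For the subsolution inequality, I would introduce, for each $\varepsilon > 0$, the Lipschitz cutoff
\[
\psi_\varepsilon(t) = \begin{cases} 0 & t \leq 0, \\ t/\varepsilon & 0 < t \leq \varepsilon, \\ 1 & t > \varepsilon, \end{cases}
\]
so that $\psi_\varepsilon \to \mathbf{1}_{(0,\infty)}$ pointwise and $0 \leq \psi_\varepsilon \leq 1$, $\psi_\varepsilon' \geq 0$. Given any admissible $\varphi \geq 0$ as in Remark~\ref{rem:tracevszero}, the product $\varphi_\varepsilon := \psi_\varepsilon(w)\,\varphi$ is again nonnegative, lies in $W^{1,2}(\Omega\cap B)$, and vanishes outside the same compact set $\mathcal{K}$ that supports $\varphi$. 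Hence $\varphi_\varepsilon$ is a legitimate test function in \eqref{3c31}. Using $\nabla \varphi_\varepsilon = \psi_\varepsilon'(w)\,\varphi\,\nabla w + \psi_\varepsilon(w)\,\nabla\varphi$ gives
\[
\int_{\Omega\cap B} \psi_\varepsilon'(w)\,\varphi\, A\nabla w \cdot \nabla w
+ \int_{\Omega \cap B} \psi_\varepsilon(w)\, A\nabla w \cdot \nabla\varphi
+ \beta \int_{\d\Omega \cap B} w\,\psi_\varepsilon(\Tr w)\,\varphi\, d\sigma \leq 0.
\]
By ellipticity of $A$ and the signs $\psi_\varepsilon' \geq 0$, $\varphi \geq 0$, the first term is nonnegative and may be discarded.

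I would then pass to the limit $\varepsilon \to 0$ in the remaining two terms by dominated convergence: pointwise, $\psi_\varepsilon(w) \to \mathbf{1}_{\{w>0\}}$ in $\Omega\cap B$ and $\psi_\varepsilon(\Tr w) \to \mathbf{1}_{\{\Tr w > 0\}}$ on $\d\Omega\cap B$, with the integrands dominated by $|A\nabla w||\nabla\varphi| \in L^1$ and $|\Tr w|\,|\varphi| \in L^1(\sigma)$ respectively. This yields
\[
\int_{\Omega\cap B} A \nabla w^+ \cdot \nabla \varphi
+ \beta \int_{\d\Omega \cap B} (\Tr w)^+ \varphi\, d\sigma \leq 0,
\]
which is the Robin subsolution inequality for $w^+$. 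Since $\beta \geq 0$, $(\Tr w)^+ = \Tr(w^+) \geq 0$ and $\varphi \geq 0$, the boundary term is nonnegative, and dropping it gives the desired Neumann subsolution inequality. The only mildly delicate point is the convergence of the boundary integral, which I would handle by first observing that $\psi_\varepsilon \circ \Tr = \Tr \circ \psi_\varepsilon$ (both sides are the trace of the Sobolev composition, which agrees with the Lipschitz composition of the trace) and then appealing to dominated convergence on $(\d\Omega\cap B, \sigma)$.
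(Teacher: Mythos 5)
Your route is essentially the paper's: you approximate the truncation $t\mapsto t^+$, multiply the test function by an approximation of its derivative, use ellipticity to discard the term carrying the (nonnegative) second derivative of the approximation, and use $\beta\geq 0$ together with the fact that the approximate derivative vanishes where $w<0$ to control the boundary term. Your piecewise-linear $\psi_\varepsilon$ plays exactly the role of $F_\epsilon'$ in the paper, where $F_\epsilon(t)=\sqrt{t^2+\epsilon^2}-\epsilon$ for $t\geq 0$; the two regularizations are interchangeable, and your version even records the slightly stronger conclusion that $w^+$ is a Robin subsolution before dropping the boundary term.

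There is, however, one step that does not hold as written: the claim that $\varphi_\varepsilon=\psi_\varepsilon(w)\varphi$ lies in $W^{1,2}(\Omega\cap B)$ for every admissible $\varphi$. The test functions in the lemma are only assumed to be in $W^{1,2}(\Omega\cap B)$ (vanishing outside a compact subset of $B$), hence possibly unbounded, and the product-rule term $\psi_\varepsilon'(w)\,\varphi\,\nabla w$ need not be square integrable: $\psi_\varepsilon'(w)\leq \varepsilon^{-1}$ is bounded, but $\varphi\in L^p$ only for some $p<\infty$ while $\nabla w\in L^2$, so the product is in general only in $L^q$ for some $q<2$. The paper handles exactly this point by first replacing $\varphi$ with the truncation $\varphi^M=\min\{M,\varphi\}$, for which $F_\epsilon'(w)\varphi^M\in W^{1,2}$ because both factors are in $W^{1,2}\cap L^\infty$, and then letting $M\uparrow\infty$ at the end, using that $\nabla\varphi^M\to\nabla\varphi$ in $L^2$ (and $\varphi^M\to\varphi$ in $L^2(\sigma)$) by dominated convergence. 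Inserting this truncation-in-$M$ step into your argument closes the gap; everything else (the chain rule giving $w^+\in W^{1,2}$ with $\nabla w^+=\mathbf{1}_{\{w>0\}}\nabla w$, the compatibility of the trace with Lipschitz compositions, and the two applications of dominated convergence as $\varepsilon\to 0$) is sound.
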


\begin{proof}
 Define $F_\epsilon(t) := \sqrt{t^2 + \epsilon^2} - \epsilon$ when $t\geq 0$ and $F_\epsilon(t) = 0$ when $t < 0$. Note that $F_\epsilon(t) \rightarrow t^+$ pointwise as $\epsilon \downarrow 0$ and so $\nabla F_\epsilon(w) \rightarrow \nabla w^+$ in $L^2$ by the dominated convergence theorem (we are using that $F_\epsilon'(t) < 1$ and that $F'_\epsilon(t) \uparrow 1\chi_{(0, \infty)}(t)$ as $\epsilon\downarrow 0$). 

So if we can show that $$\int_{\Omega\cap B}A\nabla F_\epsilon(w)\nabla \varphi \leq 0,$$ for all $\epsilon > 0$ and all $\varphi$ as above then we are done.  If $\varphi$ is a valid test function, then so is $\varphi^M = \min\{M, \varphi\}$ for any $M \geq 0$. Furthermore $\nabla \varphi^M \rightarrow \nabla \varphi$ in $L^2$ as $M \uparrow \infty$ again by the dominated convergence theorem. So it suffices to show that for any $M > 0$ and $\epsilon > 0$ we have  $$\int_{\Omega\cap B}A\nabla F_\epsilon(w)\nabla \varphi^M \leq 0.$$

We compute $$\begin{aligned} \int_{\Omega \cap B} A\nabla F_\epsilon(w)\nabla \varphi^M =& \int_{\Omega \cap B} F_\epsilon'(w) A\nabla w\nabla \varphi^M\\ =& \int_{\Omega\cap B} A\nabla w \nabla (F_\epsilon'(w)\varphi^M) - \int_{\Omega \cap B} \varphi^M F_\epsilon''(w)A\nabla w \nabla w\\
 \leq& -\beta \int_{\partial \Omega \cap B} wF_\epsilon'(w) \varphi^M \leq  0.\end{aligned}$$
We need to justify these last two inequalities a bit more. First note that $F_\epsilon'(w)\varphi^M\in W^{1,2}$ because $F_\epsilon'(w), \varphi^M \in W^{1,2}\cap L^\infty$. Next observe that $F_\epsilon'(w)\varphi^M \geq 0$ (recall that $F' \geq 0$), and so $F_\epsilon'(w)\varphi^M$ is a valid test function. The second integral on the penultimate line is $\geq 0$ because of ellipticity and the fact that $F_\epsilon''(w) \geq 0$.  Finally, the last integral is $\geq 0$ because $\beta\geq 0$ and $F'_\epsilon(w) = 0$ whenever $w < 0$.
\end{proof}

	The last ingredient for the Harnack inequality is a density lemma for Neumann super-solutions. Before proving the Lemma it is worth reminding ourselves that if $F: [a,b]\rightarrow \mathbb R$ 
is $C^1$ and $u\in W^{1,2}(\Omega)$, such that $a \leq u \leq b$
almost everywhere, then $F \circ u \in W^{1,2}(\Omega)$. 
This fact will also be used in subsequent proofs.
	
	\begin{lemma}[Density Lemma for Neumann supersolutions]
		\label{lem:neummdensity}     
	There exist a constant $K > 1$, that depends on the geometric constants
	for $(\Omega,\sigma)$ and, for each $\eta \in (0,1)$, a constant $c_\eta > 0$, that also depends on the ellipticity constant for $A$, with the following property.
	
	 Let $w\in W^{1,2}(\Omega\cap B(0,Kr))$ be a 
	 non-negative function satisfying
		\begin{align*}
			\int_{\Omega\cap B(0,Kr)}A\nabla w\nabla \varphi \geq 0
		\end{align*}
		for all $0 \leq \varphi\in W^{1,2}(\Omega\cap B(0,Kr))$ 
		such that $\varphi \equiv 0$ on $\Omega \sm B(0,\rho)$ for some $\rho < Kr$
(see Remark \ref{rem:tracevszero}). 
		If  
		\begin{equation*}
			|\{x\in \Omega\cap B(0,2 r)\,:\, w(x)\geq 1\}|\geq \eta|\Omega \cap B(0, 2r)|,
		\end{equation*} 
		then,
		\begin{align}
			\inf_{\Omega\cap B(0,r)}w\geq c_\eta.
		\end{align}
	\end{lemma}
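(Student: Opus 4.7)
I would follow the classical Moser--De Giorgi approach: combine a weak Harnack estimate derived from iteration on negative powers of $w$ with a BMO bound on $\log w$, use John--Nirenberg, and then invoke the density hypothesis. Replace $w$ by $u = w + \delta$ with $\delta > 0$ throughout and let $\delta \to 0$ at the end to avoid dealing with the possible lack of strict positivity.

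For the weak Harnack estimate, test the supersolution inequality against $\varphi = u^{-(1+\alpha)}\psi^{2}$ for $\alpha > 0$ and $\psi \in C_{c}^{\infty}(B(0,\rho))$ with $\rho < Kr$. Setting $z = u^{-\alpha/2}$ and using ellipticity plus Cauchy--Schwarz (to absorb a cross term), one obtains the Caccioppoli inequality
\begin{equation*}
\int_{\Omega}\psi^{2}|\nabla z|^{2} \leq C_{\alpha}\int_{\Omega}z^{2}|\nabla\psi|^{2}.
\end{equation*}
Combined with the self-improving Sobolev embedding implicit in Lemma~\ref{th:Poincare1} and iterated in the same way as in the proof of Lemma~\ref{lem:neumannMoser}, this yields
\begin{equation*}
\inf_{B(0,r)\cap\Omega} u \geq C^{-1}\Bigl(\fint_{B(0,2r)\cap\Omega} u^{-\gamma}\Bigr)^{-1/\gamma}
\end{equation*}
for any sufficiently small $\gamma > 0$. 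The factor $K$ in the statement absorbs the geometric enlargement at each iteration step produced by the larger ball on the right-hand side of Lemma~\ref{th:Poincare1}.

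For the BMO bound, test the supersolution inequality against $\varphi = u^{-1}\psi^{2}$; absorption gives $\int \psi^{2}|\nabla\log u|^{2} \leq C\int|\nabla\psi|^{2}$. Combined with the Poincaré inequality of Lemma~\ref{th:Poincare1}, this yields $\fint_{B'\cap\Omega}|\log u - c_{B'}|^{2} \leq C$ for every ball $B'$ with $K B' \subset B(0,Kr)$, with $C$ depending only on the geometric constants and ellipticity. Since $(\Omega,|\cdot|, dx)$ is a doubling metric measure space thanks to \ref{CC1}--\ref{CC2}, a standard chaining proof of John--Nirenberg (adapted to our setting) produces $\gamma > 0$ such that
\begin{equation*}
\fint_{B(0,2r)\cap\Omega} u^{\gamma} \cdot \fint_{B(0,2r)\cap\Omega} u^{-\gamma} \leq C.
\end{equation*}
The density hypothesis gives $\fint_{B(0,2r)\cap\Omega} u^{\gamma} \geq \eta$, so $\fint u^{-\gamma} \leq C/\eta$, and the weak Harnack estimate now delivers $\inf_{B(0,r)\cap\Omega} u \geq c\,\eta^{1/\gamma}$; sending $\delta \to 0$ concludes.

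The main obstacle is pushing John--Nirenberg through in our rough setting: one must check that the chaining argument works on $\Omega \cap B(0,2r)$ viewed as a doubling metric measure space, which relies crucially on the Harnack chain condition \ref{CC2} and the corkscrew condition \ref{CC1} (in particular to ensure that each ball $B'$ centered in $\Omega$ has measure comparable to its Euclidean counterpart, so that the usual Calderón--Zygmund stopping-time argument applies). A secondary subtlety is respecting the compact-support condition on test functions from Remark~\ref{rem:tracevszero} throughout both iterations, which is precisely what necessitates the dilation constant $K$ in the statement.
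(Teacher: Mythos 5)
Your plan is correct in outline but takes a genuinely different, and substantially heavier, route than the paper. You run the full classical weak Harnack machinery: a Moser iteration on negative powers to bound $\inf u$ below by a negative $L^\gamma$ mean, the log-Caccioppoli estimate making $\log u$ BMO, John--Nirenberg to couple the positive and negative means, and only then the density hypothesis. The paper short-circuits almost all of this: it shows that $v_\delta^+=(\log(w+\delta))^-$ is a Neumann subsolution (via Lemma \ref{l:subsolformax}), applies the already-proved sup bound of Lemma \ref{lem:neumannMoser} (with $\tau=0$) to $v_\delta^+$, and bounds the resulting $L^2$ average of $v_\delta^+$ by combining the same log-Caccioppoli estimate you derive with the Poincar\'e inequality of Lemma \ref{th:Poincare1} --- crucially, that lemma allows the average $\bar u$ to be taken over any subset $E$ of proportional measure, and the density hypothesis supplies exactly such a set, on which $v_\delta^+$ vanishes. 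This avoids both the negative-power iteration and John--Nirenberg entirely; since the latter is the step you yourself flag as the main obstacle (BMO with an enlargement $B'\mapsto KB'$, on $\Omega\cap B(0,2r)$ viewed as a doubling space, with Lemma \ref{th:Poincare1} stated only for boundary-centered balls), the shortcut buys a much shorter proof from ingredients already established.

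Two caveats on your write-up. First, the claim that the constant $K$ in the statement ``absorbs the geometric enlargement at each iteration step'' produced by Lemma \ref{th:Poincare1} cannot be right as written: the iteration has infinitely many steps, so a per-step enlargement by $K$ would exhaust any fixed ball. The correct fix --- the one used in the proof of Lemma \ref{lem:neumannMoser}, which you cite --- is to apply the Sobolev--Poincar\'e inequality of Lemma \ref{lem:Poincare2} to the compactly supported products $z\psi$, for which no enlargement is needed; $K$ enters only once, through the localization of the hypothesis. Second, if you keep the John--Nirenberg route you still owe the verification that the chaining/Calder\'on--Zygmund argument goes through when the oscillation on $B'\cap\Omega$ is controlled by the gradient on $KB'\cap\Omega$, and for balls centered at interior points (where you need the interior Poincar\'e inequality rather than Lemma \ref{th:Poincare1}); this is standard in doubling spaces (the corkscrew condition gives $|B(x,s)\cap\Omega|\sim s^n$), but it is genuine work that the paper's argument never needs.
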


	\begin{proof}
		We will prove this for $w_{\delta}=w+\delta$ with $\delta>0$, which obviously satisfies the assumptions, and with constants that do not depend on $\delta$; the result for $w$ will follow at once. 
	
		We want to show that $v^+_{\delta}:= (\log(w_{\delta}))^-$ is a Neumann subsolution. 
		By Lemma \ref{l:subsolformax}, it suffices to show that 
		$v_\delta:= -(\log(w_{\delta}))$ is a subsolution, 
	and indeed, this is straightforward to check: 
		\begin{align*}
			\int_{\Omega\cap B(0,Kr)}A\nabla v_\delta \nabla \varphi&=\int_{\Omega\cap B(0,Kr)}-\frac{1}{w_\delta}A\nabla w_\delta\nabla \varphi \\
			&= -\int_{\Omega\cap B(0,Kr)}A\nabla w_\delta \nabla \left(\frac{1}{w_\delta}\varphi\right)-\int_{\Omega\cap B(0,Kr)}\left(\frac{1}{w_\delta^2}\varphi\right) A\nabla w_\delta \nabla w_\delta.
		\end{align*}
	By hypothesis, if $\varphi \geq 0$ we have that $\int_{\Omega\cap B(0,Kr)}A\nabla w_\delta \nabla \big(\frac{1}{w_\delta}\varphi\big)\geq 0$, so we conclude that 
		\begin{equation*}
			\int_{\Omega\cap B(0,Kr)}A\nabla v^+_{\delta}\nabla \varphi\leq 0
		\end{equation*}
		for all $\varphi\in W^{1,2}(\Omega\cap B(0,Kr))$ such that $\varphi \equiv 0$ on $\mathcal{K}^c$. 
		
		We apply Lemma~\ref{lem:neumannMoser} (the Moser inequality) with $\tau=0$ and obtain:
		\begin{align*}
			\sup_{\Omega\cap B(0,r)}v^+_{\delta}\leq C\left(\fint_{\Omega\cap B(0,2r)}(v^+_{\delta})^2\right)^{\frac{1}{2}}.
		\end{align*}
		We want to bound the right hand side of the above equation. By hypothesis we know that 
		\begin{equation*}
	|\{x\in \Omega\cap B(0,2r)\,:\, v^+_{\delta}(x)=0\}|\geq \eta|\Omega\cap B(0,2r)|, 
		\end{equation*}
		so by the Poincar\'e inequality in Theorem \ref{th:Poincare1}
		we have that   
		\begin{equation}\label{eq:upperboundsup}
			\sup_{\Omega\cap B(0,r)}v^+_{\delta}
		\leq C\left(\fint_{\Omega\cap B(0,2r)}(v^+_{\delta})^2\right)^{\frac{1}{2}}
	\leq Cr\left(\fint_{\Omega\cap B(0,Kr)}|\nabla v^+_{\delta}|^2\right)^{\frac{1}{2}}.
		\end{equation}
		We recall that the constant $C$ in the above equation depends on the geometry of the domain and on $\eta$. 
		
		We use the test function $\phi=\frac{\xi^2}{w_\delta}$, where $\xi$ will be chosen soon
		(compactly supported in $B(0,4r)$), and we compute 
		\begin{equation}\label{extra}
			0\leq \int_{\Omega\cap B(0,Kr)}A\nabla w_\delta\nabla \phi=-\int_{\Omega\cap B(0,Kr)} \xi^2\frac{A\nabla w_\delta \nabla w_\delta}{w_\delta^2}+2\int_{\Omega\cap B(0,Kr)} \xi\frac{A\nabla w_\delta\nabla \xi}{w_\delta}.
		\end{equation}
		By ellipticity and the Cauchy-Schwarz inequality, 
	\begin{equation*}\begin{aligned}
	\fint_{\Omega\cap B(0,Kr)}\xi^2|\nabla v_{\delta}|^2 
	& = \fint_{\Omega\cap B(0,Kr)}\xi^2 \, \frac{|\nabla w_\delta |^2}{w_\delta^2}
	\leq C \fint_{\Omega\cap B(0,Kr)}\xi^2\frac{A\nabla w_\delta \nabla w_\delta}{w_\delta^2}
	\\&\leq C \int_{\Omega\cap B(0,Kr)} \xi \, \frac{|\nabla w_\delta|}{w_\delta} \, |\nabla \xi|
	 \leq C \left(\fint_{\Omega\cap B(0,Kr)}\xi^2|\nabla v_{\delta}|^2\right)^{\frac{1}{2}}
	\left(\fint_{\Omega\cap B(0,Kr)}|\nabla \xi|^2\right)^{\frac{1}{2}},
	\end{aligned}\end{equation*}
	where we used \eqref{extra} in the second inequality.
		We choose $\xi\in \mathcal{C}^1_0(B(0,4r))$ such that $\xi=1$ on $B(0,2r)$ and 
		$|\nabla \xi|\leq \frac{C}{r}$, and we obtain
		\begin{equation*}
			\left(\fint_{\Omega\cap B(0,2r)}|\nabla v^+_{\delta}|^2\right)^{\frac{1}{2}} \leq \left(\fint_{\Omega\cap B(0,2r)}|\nabla v_{\delta}|^2\right)^{\frac{1}{2}}\leq \frac{C}{r}.
		\end{equation*}
		Hence, from \eqref{eq:upperboundsup}, $\displaystyle \sup_{\Omega\cap B(0,r)}v^+_{\delta}\leq C$,
	and so we have 	$\displaystyle \inf_{\Omega\cap B(0,r)}w_{\delta}\geq e^{-C}$, 
		where $C$ does not depend on $\delta$.
	\end{proof}

	With the aid of the density estimate we are able to prove oscillation estimates, which may be of independent interest.

	\begin{lemma}[Oscillation Estimate for Neumann solutions] 
		\label{lem:neummOsc} 
There exist a constant $K > 1$, that depends on the geometric constants for $(\Omega, \sigma)$,
and a constant $\gamma \in (0,1)$, that depends also on the ellipticity constant for $A$, so that
if  $w\in W^{1,2}(\Omega\cap B(0,Kr))$ is non-negative, bounded, and such that 
		\begin{align}
			\label{neumann}
			\int_{\Omega\cap B(0,Kr)}A\nabla w\nabla \varphi=0
		\end{align}
for all $\varphi\in W^{1,2}(\Omega\cap B(0,Kr))$ such that $\varphi \equiv 0$ on $\Omega \sm B(0,\rho)$ for some $\rho < Kr$
(see Remark \ref{rem:tracevszero}), then 
		\begin{equation}
			\osc_{\Omega\cap B(0,r)}w\leq \gamma \osc_{\Omega\cap B(0,2r)}w.
		\end{equation}
	\end{lemma}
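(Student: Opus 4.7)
The plan is to apply the density estimate for nonnegative Neumann supersolutions (Lemma \ref{lem:neummdensity}) to a suitable affine rescaling of $w$, taking advantage of the fact that $w$ is a genuine \emph{solution} of the Neumann problem: both $w-c$ and $c-w$ are again solutions (hence nonnegative supersolutions) for any constant $c$ chosen as a lower/upper bound of $w$.

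Concretely, set $M:=\sup_{\Omega\cap B(0,Kr)} w$ and $m:=\inf_{\Omega\cap B(0,Kr)} w$, both finite by the boundedness assumption, and assume $M>m$ (otherwise there is nothing to prove). Partition $\Omega\cap B(0,2r)$ into
\[
E_+:=\{x\in \Omega\cap B(0,2r):\,w(x)\geq (M+m)/2\}, \quad E_-:=\{x\in \Omega\cap B(0,2r):\,w(x)<(M+m)/2\}.
\]
One of these two sets has measure at least $\tfrac12 |\Omega\cap B(0,2r)|$; suppose first it is $E_+$. Define $u:=2(w-m)/(M-m)$, which is nonnegative on $\Omega\cap B(0,Kr)$ (by the choice of $m$) and, by linearity, satisfies the same Neumann equation as $w$, hence is a nonnegative supersolution in the sense of Lemma \ref{lem:neummdensity}. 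Since $u\geq 1$ on $E_+$, the density lemma (with $\eta=1/2$) yields
\[
\inf_{\Omega\cap B(0,r)} u\geq c_{1/2},\qquad\text{i.e.,}\qquad \inf_{\Omega\cap B(0,r)} w\geq m+\frac{c_{1/2}}{2}(M-m).
\]
Combining with the trivial bound $\sup_{\Omega\cap B(0,r)} w\leq M$ and setting $\gamma:=1-c_{1/2}/2\in(0,1)$, we obtain
\[
\osc_{\Omega\cap B(0,r)} w \leq \gamma(M-m) = \gamma\,\osc_{\Omega\cap B(0,Kr)} w.
\]
If instead $E_-$ is the larger set, the same reasoning applied to the nonnegative supersolution $v:=2(M-w)/(M-m)$ gives an analogous upper bound on $\sup_{\Omega\cap B(0,r)} w$, yielding the same conclusion.

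The mild technical point (and the main obstacle worth flagging) is that Lemma \ref{lem:neummdensity} demands a nonnegative supersolution on the \emph{entire} ball $B(0,Kr)$, which forces the normalizing constants $M,m$ to be extrema over $B(0,Kr)$ rather than over $B(0,2r)$: truncations such as $(w-m_2)^+$ at the midpoint $(M_2+m_2)/2$ would spoil the supersolution property, since positive parts of solutions are only subsolutions. The natural outcome of the argument is therefore $\osc_{\Omega\cap B(0,r)} w \leq \gamma\,\osc_{\Omega\cap B(0,Kr)} w$; the form stated in the lemma with $2r$ can be recovered either by absorbing the density lemma's constant into a larger choice of $K$ in the hypothesis or by a standard scale-iteration, both of which only affect the resulting Hölder exponent in the boundary continuity estimate that this lemma feeds into.
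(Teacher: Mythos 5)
Your proof runs on exactly the same mechanism as the paper's: normalize $w$ affinely, observe that one of the two level sets (above or below the midpoint) fills at least half of $\Omega\cap B(0,2r)$, and apply the density Lemma \ref{lem:neummdensity} with $\eta=\tfrac12$ to the corresponding nonnegative solution, in both cases. The only divergence is the choice of normalizing constants. The paper takes $\alpha_2=\sup_{\Omega\cap B(0,2r)}w$, $\beta_2=\inf_{\Omega\cap B(0,2r)}w$ and applies the density lemma to $2(w-\beta_2)/(\alpha_2-\beta_2)$ and $2(\alpha_2-w)/(\alpha_2-\beta_2)$; this is what produces the stated inequality with $\osc_{\Omega\cap B(0,2r)}w$ on the right, but these functions are only guaranteed nonnegative on $\Omega\cap B(0,2r)$, which is precisely the tension you flag with the hypotheses of Lemma \ref{lem:neummdensity} (nonnegativity on all of $\Omega\cap B(0,Kr)$, where the logarithm of the function is manipulated). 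Your choice of $M,m$ as extrema over $B(0,Kr)$ sidesteps that issue cleanly, and your two-case argument is correct as written.

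The price is that you prove $\osc_{\Omega\cap B(0,r)}w\leq\gamma\,\osc_{\Omega\cap B(0,Kr)}w$, which is strictly weaker than the statement, and neither of your proposed repairs literally restores it: enlarging $K$ in the hypothesis or iterating your estimate only yields oscillation decay per dilation by the factor $K$, never a comparison between the scales $r$ and $2r$ with the solution living on the much larger ball $B(0,Kr)$. To get the literal $2r$ form one has to normalize by the $B(0,2r)$ extrema, as the paper does, and hence either accept the paper's (somewhat cavalier) application of the density lemma or rework that lemma so that nonnegativity is only required where it is actually used. In practice the distinction is harmless: an oscillation estimate of this kind is only ever iterated to produce H\"older continuity, and replacing the ratio $2$ by the ratio $K$ merely changes the resulting exponent. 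So: correct proof, same approach as the paper, with a more conservative normalization that trades the literal statement for a rigorous application of Lemma \ref{lem:neummdensity}; just be upfront that what you obtain is the $Kr$ variant rather than the inequality exactly as stated.
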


	\begin{proof} 
		By assumption, we know that $w$ is bounded in $\Omega\cap B(0,Kr)$. We define the numbers:
		\begin{equation*}
			\alpha_2=\sup_{\Omega\cap B(0,2r)}w\,\,\, \text{ and  }\,\,\, \beta_2=\inf_{\Omega\cap B(0,2r)}w
		\end{equation*}
		and also 
		\begin{equation*}
\alpha_1=\sup_{\Omega\cap B(0,r)}w\,\,\, \text{ and  }\,\,\, \beta_1=\inf_{\Omega\cap B(0,r)}w.
		\end{equation*}
		The two functions
		\begin{equation*}
\frac{w-\beta_2}{\alpha_2-\beta_2}\,\,\, \text{ and  }\,\,\, \frac{\alpha_2-w}{\alpha_2-\beta_2}
		\end{equation*}
		are positive solutions in $\Omega\cap B(0,2r)$. We need to distinguish two cases. The first is when $w$ is ``generally big" and second is when $w$ is ``generally small". Note that $2\frac{w-\beta_2}{\alpha_2-\beta_2} + 2\frac{\alpha_2-w}{\alpha_2-\beta_2} = 2$, and so, at every point, one of the two summands must be larger than $1$. 
		Thus the following two cases are exhaustive:
		
		\textbf{Case 1.} Suppose that:
		\begin{equation*}
			\Big |\Big \{x\in \Omega\cap B(0,2r)\,:\, 2\frac{w(x)-\beta_2}{\alpha_2-\beta_2}\geq 1\Big \}\Big |\geq \frac{1}{2}|B(0,2r)|.
		\end{equation*} 
		We apply Lemma \ref{lem:neummdensity} to the function $2\frac{w-\beta_2}{\alpha_2-\beta_2}$ and $\eta=\frac{1}{2}$ and we obtain that there exists a constant $0<c<1$ 
		depending only on the geometry of $\Omega$ and the ellipticity constant such that 
		\begin{equation*}
			\beta_1=\inf_{\Omega\cap B(0,r)}w\geq \beta_2+\frac{c}{2}(\alpha_2-\beta_2).
		\end{equation*}
		
		\textbf{Case 2.} Suppose that:
		\begin{equation*}
\Big |\Big \{x\in \Omega\cap B(0,2r)\,:\, 2\frac{\alpha_2-w}{\alpha_2-\beta_2}\geq 1\Big \}\Big |
\geq \frac{1}{2}|B(0,2r)|.
		\end{equation*}
		Using the same argument as in Case $1$, we obtain:
		\begin{equation*}
			\alpha_1=\sup_{\Omega\cap B(0,r)}w\leq \alpha_2-\frac{c}{2}(\alpha_2-\beta_2),
		\end{equation*}
		where $0<c<1$ is the same constant in the Case $1$.
		
		By definition $\beta_1\geq \beta_2$ and $\alpha_1\leq \alpha_2$, so in either case
		\begin{equation*}
			\alpha_1-\beta_2\leq (1-\frac{c}{2})(\alpha_2-\beta_2),
		\end{equation*}
		which concludes the proof.
	\end{proof}

	We are now ready to prove Theorem \ref{th:neumannharnack} using 
	the density Lemma.

	\begin{proof}[Proof of Theorem \ref{th:neumannharnack}] 
Without loss of generality, assume that $r=1$ and $K\geq 2$.
		By Lemma~\ref{lem:neumannMoser} with $\tau = 0$, 
$w$ is bounded in $B(0,K/2)$. Set $S = \sup_{B(0,1)} w$; we can assume that $S > 0$.
		
Notice that $w$ is a positive $A$-harmonic function, so we can apply Lemma \ref{l:CSmax} and get that 
	$w(A_{1/4}(0)) \geq C^{-1}S$, where $A_{1/4}(0)$ is a corkscrew point for $B(0,1/4)$. 
	Also, the Harnack principle says that we still have $w \geq C^{-1}S$ (with a larger $C$)
		in a corkscrew ball of size $\simeq 1$ centered at $A_{1/4}(0)$ and  contained in $\Omega$. Then the density Lemma \ref{lem:neummdensity} 
		says that $w \geq c_\eta C^{-1}S$ on $\Omega \cap B(0,1)$ (for some $\eta \simeq 1$ with the comparability depending only on the geometric constants of $\Omega$). Thus 
		$\inf_{\Omega \cap B(0,1)} w \geq c\, C^{-1} S \geq  c_\eta C^{-1} \sup_{\Omega \cap B(0,1)} w$,
		as announced in \eqref{e:neumannharnack}.
		\end{proof}

	\section{H\"older continuity for the Robin problem}
	\label{sec:density}
	We continue our efforts to prove H\"older continuity for solutions to the Robin problem. 
	
	Recall that if $Q\in \partial \Omega$ and $B \equiv B(Q,R)$ for some $R > 0$ we say that $u\in W^{1,2}(\Omega\cap B)$ solves the Robin problem with data $f$ if:
	
	\begin{align}
		\label{weakformloc}
		\frac{1}{a}\int_{\Omega\cap B}A\nabla u\nabla \varphi
	+\int_{\partial \Omega\cap B}u\varphi d\sigma =\int_{\partial \Omega \cap B} f\varphi d\sigma
	\end{align}
	for all $\varphi\in W^{1,2}(\Omega\cap B)$ such that $\varphi \equiv 0$ 
	on $\Omega \sm \mathcal{K}$ 
	for some compact $\mathcal{K}\subset \subset B$. We recall that we choose this definition for the test function because we want to avoid to refer to $\mathrm{Tr}(\varphi)|_{\partial B\cap \Omega}$ since $B\cap \Omega$ may not be a ``good" domain (for the trace/extension)  
(see Remark~\ref{rem:tracevszero}).
	
	The main goal of this section is to prove the following result:
	
	\begin{theorem}\label{t:HolderContinuity}[H\"older continuity of Weak Solutions] Let $(\Omega, \sigma)$ be a one sided NTA pair of mixed dimension, with $\Omega$ bounded.
	Let $K>1$ be large enough, depending on the geometric constants for the pair $(\Omega, \sigma)$.
Let $u$ be a weak solution to the Robin problem (in the sense of \eqref{weakformloc}) in $B(Q, KR)\cap \Omega$. Define the H{\"o}lder norms 
		\begin{align*}
	\|u\|_{C^{0,\alpha}(B(Q,R)\cap \Omega)} \vcentcolon
=\sup_{B(Q,R)\cap \Omega} |u| +\sup_{x\neq y \in B(Q,R)\cap \Omega} \frac{|u(x)-u(y)|}{|x-y|^\alpha} .
		\end{align*}
		There exist constants $C > 0$ and $\alpha_0 \in (0,1)$, depending on the geometric 
constants of $(\Omega, \sigma)$ 
and the ellipticity of $A$, but not on $a$, 
such that for $0 < \alpha \leq \alpha_0$ and $u$ as above, 
$$ \|u\|_{C^{0,\alpha}(B(Q,R)\cap \Omega)} \leq C\left(\fint_{B(Q,2R)\cap \Omega} u^2\right)^{1/2}+C \|f\|_{C^{0,\alpha}(B(Q,KR))}.$$
	\end{theorem}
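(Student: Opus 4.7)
I plan to follow the classical De Giorgi-Nash-Moser strategy, adapted to the Robin condition via the Neumann machinery developed in Section \ref{s:neumannosc}. The argument has three parts: an $L^\infty$ bound from Moser, a boundary oscillation decay estimate, and a final iteration combined with standard interior H\"older estimates.

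For the $L^\infty$ bound, let $c := \sup_{\partial\Omega\cap B(Q, 2R)} f$. Then $u - c$ is a Robin subsolution with non-positive boundary data, so by Lemma \ref{l:subsolformax} the positive part $(u-c)^+$ is a non-negative Neumann subsolution with $\tau = 0$, and Lemma \ref{lem:neumannMoser} yields a sup bound. Treating $(c'-u)^+$ for $c' := \inf f$ symmetrically gives $\sup_{B(Q,R)\cap\Omega} |u| \leq C(\fint_{B(Q,2R)\cap\Omega} u^2)^{1/2} + C\|f\|_{L^\infty}$, which handles the $L^\infty$ part of the $C^{0,\alpha}$ norm.

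The main step is boundary oscillation decay: I aim to show there exist $\gamma \in (0,1)$ and $K \geq 1$, depending only on the geometric constants and ellipticity, such that
\[
\osc_{B(Q,r)\cap\Omega} u \;\leq\; \gamma\, \osc_{B(Q,Kr)\cap\Omega} u \;+\; C[f]_{C^{0,\alpha}}(Kr)^\alpha
\]
for every $Q \in \partial\Omega$ and small enough $r$. Setting $v := u - f(Q)$ yields a Robin solution with data $g$ of sup norm $\le \eta := [f]_{C^{0,\alpha}}(Kr)^\alpha$; let $M$, $m$, and $\omega := M-m$ be the sup, inf, and oscillation of $v$ on $B(Q, Kr) \cap \Omega$. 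The case $\omega \le 4\eta$ is trivial, so assume $\omega > 4\eta$; by the symmetry $v \mapsto -v$ I may also assume $M \ge \omega/2 \ge 2\eta$. The key observation is that $W := M - v + 2\eta \ge 2\eta$ satisfies the Robin supersolution inequality
\[
\int A\nabla W\nabla\varphi \, dx + a\int W\varphi \, d\sigma \;\ge\; a\int(M + 2\eta - g)\varphi \, d\sigma \;\ge\; 0
\]
for non-negative admissible $\varphi$, and a Robin analogue of the density Lemma \ref{lem:neummdensity} can be obtained from the log-transform $V := -\log(W/T)$: adapting the proof of that lemma, $V$ satisfies $\int A\nabla V \nabla \varphi \le a \int \varphi\, d\sigma$, i.e., it is a Neumann subsolution with source $\tau = a$ inherited from the surviving Robin term. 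Applying Moser, Poincar\'e (Lemma \ref{lem:Poincare2}), and a Caccioppoli bound for $V$, one concludes: if $|\{W \ge T\} \cap B(Q, 2r)| \ge \tfrac{1}{2}|B(Q, 2r) \cap \Omega|$, then $\inf_{B(Q, r)} W \ge c T\, \exp(-C a r^{2-n}\sigma(B(Q, r)))$. Since pointwise $W + (v - m + 2\eta) = \omega + 4\eta$, a median/dichotomy argument applied to $W$ and its symmetric counterpart $\tilde W := v - m + 2\eta$ (both Robin supersolutions with non-negative data under the appropriate sign conditions on $m$) gives the density hypothesis with $T \sim \omega$, and the oscillation decay follows.

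Iterating on the dyadic scales $r_j = K^{-j} R_0$ gives a boundary H\"older bound with exponent $\alpha_0 = \min(\alpha, \log(1/\gamma)/\log K)$; combining with the standard interior De Giorgi-Nash-Moser H\"older estimate, chained to the boundary via the Harnack chain condition \ref{CC2}, produces the claimed $C^{0, \alpha_0}$ bound throughout $B(Q, R) \cap \Omega$. The main obstacle is obtaining a decay factor $\gamma$ uniform in $a$: the log-transform calculation leaves a source $\tau = a$ that becomes the exponential factor $\exp(-Car^{2-n}\sigma(B(Q, r)))$ in the density conclusion; at small scales this factor is harmless because \eqref{1n3} forces $r^{2-n}\sigma(B(Q, r)) \lesssim r^{d - n + 2}$ with $d - n + 2 > 0$, but at larger scales (the Dirichlet-like regime where $ar^{2-n}\sigma(B(Q, r)) \gg 1$) a complementary argument, exploiting that the Robin condition pins $u$ close to $f$ on the boundary for large $a$, is needed. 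Combining the two regimes to produce a single H\"older exponent $\alpha_0$ independent of $a$ is the crux of the analysis.
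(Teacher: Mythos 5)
Your skeleton (Moser $L^\infty$ bound, boundary oscillation decay, iteration plus interior De Giorgi--Nash--Moser) is indeed the paper's strategy, your $L^\infty$ step is fine, and the computation showing that $V=-\log(W/T)$ is a Neumann subsolution with source $\tau=a$ is correct. But the argument is incomplete at exactly the two points you flag, and they are not details. First, the Dirichlet regime $a r^{2-n}\sigma(B(Q,r))\gg 1$ is simply not treated: your density conclusion $\inf_{B(Q,r)}W\gtrsim T\exp\bigl(-Car^{2-n}\sigma(B(Q,r))\bigr)$ is vacuous there, and since the constants must be uniform in $a$, for large $a$ (or large $R$) the first scales of your iteration already lie in that regime, so the iteration cannot even be started with an $a$-independent decay factor. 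The paper handles this regime with Lemma \ref{lem:density}, whose proof keeps the Robin boundary term rather than discarding it into a source $\tau=a$: under the two-sided normalization (Robin data $\geq\gamma$ and trace $\leq\gamma$), the elementary inequality $L\log x+L/x\geq 1$ lets the boundary term control $\int_{\partial\Omega\cap B}(v^+)^2$ and produces the factor $\exp\bigl(-C/(\gamma a r^{2-n}\sigma(B(Q,r)))\bigr)$, which is favorable precisely when $ar^{2-n}\sigma$ is large. Your simplification throws away exactly the structure needed where your argument stops.

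Second, even at small scales the dichotomy is broken in the case you defer to ``appropriate sign conditions on $m$'': $\tilde W=v-m+2\eta$ is a Robin supersolution with nonnegative data only when $m\leq\inf g+2\eta$, but nothing prevents a local solution from sitting far above its data on $B(Q,Kr)\cap\Omega$ (the equation is only satisfied locally, and this is the typical Neumann-like configuration). Then the data of $\tilde W$ is as negative as $-(m-\eta)$, the log transform yields a source of size $a(m-\eta)/(2\eta)$, and the resulting additive error is of order $ar^{2-n}\sigma(B)\cdot m$ rather than of order $\omega$; when $m\gg\omega$ this swamps the gain and no oscillation decay follows. This is precisely why the paper does not run a pure density argument at small scales but instead proves the boundary Harnack inequality, Theorem \ref{th:robinharnack}: localize in a tent domain (Lemma \ref{l:tentspaces}), subtract the auxiliary function $h$ with matching Neumann data and $\|h\|_\infty\leq Cc_0\sup u$ (Lemma \ref{th:neumannsub}), and apply the Neumann Harnack, Theorem \ref{th:neumannharnack}, to $u-h+\|h\|_\infty$, whose multiplicative conclusion absorbs the additive error because it is measured against $\sup u$ and not against the oscillation; Theorem \ref{t:RobinOsc} then does the bookkeeping with $\overline M=\max(M_K,\overline f)$, $\underline m=\min(m_K,\underline f)$ and invokes Lemma \ref{lem:density} when the data sits above the range of $u$. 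Your proposal has no substitute for this correction-function step, so both the wrong-sign small-scale case and the entire large-scale regime remain open, and as written the argument does not yield the theorem.
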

	
	This theorem will follow from the results in the previous section, in particular from Lemma~\ref{lem:neumannMoser} (and the remark below that), and from Theorem \ref{t:RobinOsc} below.

	As before, it first behooves us to prove a density lemma. Note that we must be careful tracking the dependence of the constants on the parameter $a$. We would like to thank Tom\'as Merch\'an, from whose typed notes we first learned of this density lemma and the proof. 
	
	\begin{lemma}\label{lem:density}
Let $K>1$ be large enough, depending on the geometric constants,  and let $u\in W^{1,2}(\Omega\cap B(0,Kr))$, $u\geq 0$, satisfy 
		\begin{align}
			\label{supersolution}
			\frac{1}{a}\int_{\Omega\cap B(0,Kr)}A\nabla u\nabla \varphi
			+\int_{\partial \Omega\cap B(0,Kr)}u\varphi d\sigma
			\geq \int_{\partial \Omega\cap B(0,Kr)}\gamma\varphi d\sigma
		\end{align}
		for all $0\leq\varphi\in W^{1,2}(\Omega\cap B(0,Kr))$such that $\varphi \equiv 0$ on $\mathcal{K}^c$ for some compact $\mathcal{K}\subset \subset B(0,Kr)$, and some number $\gamma\geq 1$. If $u\leq \gamma$ on $\partial\Omega\cap B(0,Kr)$, then 
		\begin{align}
			\label{density}
	\inf_{B(0,r)}u\geq c_1\exp\left(-\frac{c_2}{\gamma a \sigma(B(0,r)) r^{2-n}} 
			\right),
		\end{align}
		where $c_1$ and $c_2$ depend on the geometric constants of $\Omega$.\end{lemma}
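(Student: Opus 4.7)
The plan is a logarithmic Moser-type argument, adapted from the Neumann density lemma (Lemma \ref{lem:neummdensity}) to accommodate the Robin boundary term. Fix a small $\delta > 0$, to be sent to $0$ at the end; set $w = u + \delta$ and define $V = \log\!\bigl((\gamma + \delta)/w\bigr)$. The hypothesis $u \leq \gamma$ on $\partial\Omega \cap B(0,Kr)$ gives $V \geq 0$ there, and $V \in W^{1,2}(\Omega \cap B(0,Kr))$ since $w \geq \delta > 0$.

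The first step is to show that $V$ is a local Neumann subsolution. Plugging the non-negative test function $\varphi/w$ into the supersolution inequality \eqref{supersolution} and using $\nabla(\varphi/w) = \nabla\varphi/w + (\varphi/w)\nabla V$ together with $\nabla w = -w\,\nabla V$, one obtains
\begin{equation*}
\frac{1}{a}\int_{\Omega} A\nabla V \cdot \nabla\varphi \;+\; \frac{1}{a}\int_{\Omega} \varphi\, A\nabla V \cdot \nabla V \;+\; \int_{\partial\Omega}\frac{\gamma-u}{w}\,\varphi\,d\sigma \;\leq\; 0.
\end{equation*}
The last two integrals are non-negative (by ellipticity and by the hypothesis $u\leq \gamma$, respectively), so $V$ is a Neumann subsolution. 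Lemma \ref{l:subsolformax} applied with $\beta = 0$ then propagates this to $V^+$.

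The second step is a Caccioppoli-type estimate obtained by testing \eqref{supersolution} with $\varphi = \xi^2/w$, where $\xi$ is a standard cutoff with $\xi \equiv 1$ on $B(0, 2r)$, $\operatorname{supp}\xi \subset B(0,Kr)$, and $|\nabla\xi| \leq C/r$. After Cauchy--Schwarz and absorption of the cross term, this yields
\begin{equation*}
\frac{1}{a}\int_{\Omega}\xi^2|\nabla V|^2 \;+\; \int_{\partial\Omega}\frac{\gamma - u}{w}\,\xi^2\,d\sigma \;\leq\; \frac{C}{a}\,r^{n-2}.
\end{equation*}
Since $(\gamma - u)/w = e^V - 1 \geq V + V^2/2$ on the boundary (where $V \geq 0$), one reads off both $\fint_{B(0,2r)\cap\Omega}|\nabla V|^2 \leq C r^{-2}$ and $\fint_{\partial\Omega \cap B(0,2r)} V^2\,d\sigma \leq C/X$, where $X := a\,\sigma(B(0,r))\,r^{2-n}$.

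In the third step, the Moser inequality (Lemma \ref{lem:neumannMoser} with $\tau = 0$) applied to the Neumann subsolution $V^+$ yields $\sup_{B(0,r)\cap\Omega} V \leq C \bigl(\fint_{B(0,2r)\cap\Omega}(V^+)^2\bigr)^{1/2}$, and the trace Poincar\'e inequality (Lemma \ref{lem:Poincare3} with $E = \partial\Omega\cap B(0,2r)$) combined with the two Caccioppoli bounds controls the right-hand side by $C(1 + X^{-1/2})$. Exponentiating and sending $\delta \to 0$ gives a preliminary bound $u \geq c\gamma\exp(-C/\sqrt{X})$ on $B(0,r)\cap\Omega$. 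The main obstacle is to upgrade the $1/\sqrt{X}$ here to the sharper exponent $1/(\gamma X)$ appearing in the claim; this is handled by a case distinction based on the size of $\gamma X$. In the Dirichlet-like regime $\gamma X \gtrsim 1$, the strengthened boundary bound $\fint_{\partial\Omega \cap B(0,2r)}(\gamma - u)\,d\sigma \leq C\gamma/X$ together with the boundary Moser/Harnack machinery of Section \ref{s:neumannosc} and Theorem \ref{th:neumannharnack} forces $\inf u \geq c\gamma$, which dominates the target; in the Neumann-like regime $\gamma X \lesssim 1$ one iterates the Moser step on a telescoping sequence of scales, summing the resulting $1/\sqrt{X_k}$ contributions into the desired $1/(\gamma X)$ factor. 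The regime dichotomy is natural since $\gamma X$ is precisely the dimensionless parameter measuring whether the Robin condition acts as Dirichlet or as Neumann at scale $r$.
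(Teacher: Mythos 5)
Your first half is essentially sound, and it is in fact a cousin of the paper's argument: the logarithmic change of variables, the Caccioppoli estimate with test function $\xi^2/w$, and the Moser plus trace--Poincar\'e step all go through. But notice what it actually yields. Your boundary control is $\fint_{\partial\Omega\cap B(0,2r)}V^2\,d\sigma\lesssim 1/X$ with $X=a\sigma(B(0,r))r^{2-n}$, with no $\gamma$ in it, so the conclusion is $u\geq c\gamma e^{-C/\sqrt{X}}$, and this does \emph{not} imply \eqref{density} in the intermediate range (roughly $\gamma^{-2}\lesssim X\lesssim 1$ with $\gamma X$ of moderate or large size): there $e^{-C/\sqrt{X}}$ is far smaller than $e^{-c_2/(\gamma X)}$. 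The whole content of the lemma is exactly this upgrade, and your two-regime patch does not supply it. The ``Dirichlet-like'' claim that $\gamma X\gtrsim 1$ forces $\inf u\geq c\gamma$ is false: take $\Omega$ flat near $0$ (upper half-plane in $B(0,10K)$, $A=I$, $n=2$), $a$ small, $\gamma=1/a$, and $u(x,y)=\beta(K-y)$ with $\beta=\gamma a/(1+Ka)$; this $u$ is nonnegative, harmonic, satisfies \eqref{supersolution} with data exactly $\gamma$ and $u=\beta K\leq\gamma$ on $\partial\Omega\cap B(0,K)$, yet $\inf_{B(0,1)}u\approx K-1$, a geometric constant, while $c\gamma=c/a\to\infty$. (The lemma itself is safe here because its target in that regime is only a constant.) Your ``strengthened'' bound $\fint(\gamma-u)\lesssim\gamma/X$ is vacuous whenever $X\lesssim 1$, which is precisely where the problem lies. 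The ``Neumann-like'' telescoping is not an argument: the quantities $X_k$ at smaller scales contain no $\gamma$, they \emph{decrease} as the scale decreases (by \eqref{1n3}), so the $X_k^{-1/2}$ contributions grow, and no summation of them can manufacture the factor $1/(\gamma X)$.

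The missing idea is to keep the Robin structure, with the weight $\gamma$, for the logarithm rather than discarding it after checking positivity of the boundary term. The paper works with $v_\delta=-\log u_\delta$ and shows, testing \eqref{supersolution} with $\varphi/u_\delta$ and using the elementary inequality $\gamma\log x+\gamma/x\geq 1$ for $x>0$, $\gamma\geq 1$ (this is where $\gamma\geq1$ and $u\leq\gamma$ enter), that $v_\delta$ is a subsolution of a Robin-type problem with boundary coefficient $\gamma_\delta$, namely $\frac1a\int A\nabla v_\delta\nabla\varphi+\gamma_\delta\int_{\partial\Omega}v_\delta\varphi\leq 0$. Testing this inequality with $v_\delta^+\varphi^2$ gives a boundary estimate carrying the crucial factor $(a\gamma)^{-1}$ and, equally important, \emph{linear} in $\|v_\delta^+\|_{L^2}$: $\int_{\partial\Omega\cap B(0,r)}a\gamma_\delta(v_\delta^+)^2\lesssim r^{-1}\|\nabla v_\delta^+\|_{L^2}\|v_\delta^+\|_{L^2}$. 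Combined with the trace--Poincar\'e inequality and the Caccioppoli bound $\int|\nabla v_\delta^+|^2\lesssim r^{n-2}$ (which you do have), this self-improves by absorption to $\fint(v_\delta^+)^2\lesssim(\gamma X)^{-2}+1$, whence $\sup_{B(0,r)}v_\delta^+\lesssim(\gamma X)^{-1}+1$ and \eqref{density}. Your normalization $V=\log((\gamma+\delta)/w)$, followed by dropping the boundary term to get a pure Neumann subsolution, loses exactly this $\gamma$-weighted boundary control, and it cannot be recovered afterwards by scale iteration or by the density/Harnack machinery alone.
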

	
	All in all, we are assuming that the Robin data is greater than or equal to $\gamma$ and the Dirichlet data is smaller than or equal to $\gamma$ on $\partial\Omega\cap B(0, Kr)$. Notice that $\gamma = 1$ already gives a reasonable estimate; then if 
we can take $\gamma$ larger, the estimate \eqref{density} becomes better. 
And if the assumption holds only for some small $\gamma > 0$, we can always apply the result to $\lambda u$. 
Note also that when $\sigma$ is Ahlfors regular of dimension $d$, 
then $\sigma(B(0,r)) r^{2-n} \sim r^{2-n+d}$, with an exponent on $r$ 
which is strictly between $0$ and $2$ by our restriction on $d$, and is equal to $1$ when $d=n-1$.

	\begin{proof} 
		Let $\delta>0$ and define $u_{\delta}=u+\delta$ (we do this merely to make sure we do not divide by $0$, we will send $\delta$ to $0$ at the end). Note that 
		\begin{align}
			\label{superdelta}
	\frac{1}{a}\int_{\Omega\cap B(0,Kr)}&A\nabla u_{\delta}\nabla \varphi
		+\int_{\partial \Omega\cap B(0,Kr)}u_{\delta}\varphi  
		\nonumber
		\\ 
		=\frac{1}{a}&\int_{\Omega\cap B(0,Kr)}A\nabla u\nabla \varphi
			+\int_{\partial \Omega\cap B(0,Kr)}u\varphi
			+\int_{\partial \Omega\cap B(0,Kr)}\delta\varphi
			\geq \int_{\partial \Omega\cap B(0,Kr)}(\gamma+\delta)\varphi.
		\end{align}
		Call $\gamma_{\delta}:=\gamma+\delta$. We let now $v_{\delta}:=-\log{u_{\delta}}$. We want to prove that $v_{\delta}$ satisfies 
		\begin{align}
			\label{subsolution}
			\frac{1}{a}\int_{\Omega\cap B(0,2r)}A\nabla v_{\delta}\nabla \varphi+\int_{\partial \Omega\cap B(0,2r)}\gamma_{\delta}v_{\delta}\varphi\leq 0 
		\end{align}
		for all $0\leq \varphi\in W^{1,2}(\Omega\cap B(0,2r))$ with $\varphi=0$ on $\mathcal K^c$ where $\mathcal K \subset \subset B(0,2r)$. Since $\varphi$ vanishes outside of a compact subset of $B(0,2r)$ we can extend $\varphi$ by $0$ on $B(0,Kr)\setminus B(0,2r)$. We compute
		\begin{align*}
			\frac{1}{a}\int_{\Omega\cap B(0,2r)}A\nabla v_{\delta}\nabla \varphi+\int_{\partial \Omega\cap B(0,2r)}\gamma_{\delta}v_{\delta}\varphi=-\frac{1}{a}\int_{\Omega\cap B(0,Kr)}A\frac{\nabla u_{\delta}}{u_\delta}\nabla \varphi+\int_{\partial \Omega\cap B(0,Kr)}\gamma_{\delta}v_{\delta}\varphi\\
			=-\frac{1}{a}\int_{\Omega\cap B(0,Kr)}A\nabla u_{\delta}\nabla \left(\frac{\varphi}{u_\delta}\right)-\frac{1}{a}\int_{\Omega\cap B(0,Kr)}\frac{A\nabla u_{\delta}\nabla u_\delta}{|u_\delta|^2}\varphi+\int_{\partial \Omega\cap B(0,Kr)}\gamma_{\delta}v_{\delta}\varphi.
		\end{align*}
		Now, by \eqref{superdelta} applied with $\varphi/u_{\delta}$ instead of $\varphi$, we have that 
		\begin{align*}
			\frac{1}{a}\int_{\Omega\cap B(0,Kr)}A\nabla u_{\delta}\nabla \left(\frac{\varphi}{u_\delta}\right)\geq -\int_{\partial \Omega\cap B(0,Kr)}\varphi+\int_{\partial \Omega\cap B(0,Kr)}\gamma_{\delta}\frac{\varphi}{u_{\delta}}.
		\end{align*}
		Plugging it in the above estimate and forgetting the term with 
		$A\nabla u_{\delta}\nabla u_\delta \geq 0$,  we obtain 
		\begin{align*}
			\frac{1}{a} \int_{\Omega\cap B(0,2r)}A\nabla v_{\delta}\nabla \varphi+\int_{\partial \Omega\cap B(0,2r)}\gamma_{\delta}v_{\delta}\varphi\leq \int_{\partial \Omega\cap B(0,Kr)}\varphi\left(1-\frac{\gamma_{\delta}}{u_{\delta}}+\gamma_{\delta}v_{\delta}\right).
		\end{align*}
		Observe that $L\log x + L/x \geq 1$  
		for $x>0$ and $L>1$. Since $\gamma_{\delta}>1$ by hypothesis, we have that 
		\begin{align*}
			1-\frac{\gamma_{\delta}}{u_{\delta}}+\gamma_{\delta}v_{\delta}\leq 0,
		\end{align*}
		completing the proof of \eqref{subsolution}. 
	Lemma \ref{l:subsolformax} implies that $v_{\delta}^+$ is a (Neumann-)subsolution and the Moser inequality \eqref{e:moser}, with $\tau = 0$, yields 
			\begin{align}
			\label{moser}
			\sup_{\Omega\cap B(0,r)}v_{\delta}^+\leq C\left(\fint_{\Omega\cap B(0,2r)}(v_{\delta}^+)^2\right)^{\frac{1}{2}},
		\end{align}
		where $C$ depends on the geometric constants of $\Omega$ and the ellipticity of $A$. We will need the Poincar\'e inequality \eqref{e:poincaretrace}, which we apply to $v_{\delta}^+$:
		\begin{align}
			\label{e:poincaredelta}
	\fint_{\Omega\cap B(0,2r)}(v_{\delta}^+)^2\leq  \frac{C}{\sigma(B(0,r))}   
	\int_{\partial\Omega\cap B(0,r)}(v_{\delta}^+)^2+Cr^{2-n}\int_{\Omega\cap B(0,(K/2)r)}|\nabla v_{\delta}^+|^2.
		\end{align}
An attentive reader will have noticed (but should not be concerned) that we need to take $K$ four times larger than usual when applying  \eqref{e:poincaretrace} to make sure we integrate over $B(0, (K/2)r)$ in \eqref{e:poincaredelta}.

		Let $\beta\in C^{\infty}_c(B(0,Kr))$, with $0\leq \beta \leq 1$, $\beta=1$ on $B(0,(K/2)r)$, $|\nabla\beta|\leq C/r$. Using \eqref{superdelta} with $\varphi=\beta^2/u_{\delta}$, we estimate 
		\begin{align*}
			0\leq \int_{\Omega\cap B(0,Kr)}A\nabla u_{\delta}\nabla\left(\frac{\beta^2}{u_{\delta}}\right)+\int_{\partial\Omega\cap B(0,Kr)}a(u_{\delta}-\gamma_{\delta})\frac{\beta^2}{u_{\delta}},
		\end{align*}
and since $u\leq \gamma$ on $\partial\Omega\cap B(0,Kr)$, $u_{\delta}\leq\gamma_{\delta}$. 
Hence 
\begin{align*}
0 \leq \int_{\Omega\cap B(0,Kr)}A\nabla u_{\delta}\nabla\left(\frac{\beta^2}{u_{\delta}}\right)
= - \int_{\Omega\cap B(0,Kr)}\frac{\beta^2}{u_{\delta}^2} A\nabla u_{\delta} \nabla u_\delta
+ 2  \int_{\Omega\cap B(0,Kr)}\frac{\beta}{u_{\delta}} A\nabla u_{\delta} \nabla \beta
		\end{align*}
and, by ellipticity and Cauchy-Schwarz,
\begin{multline*}
	\int_{\Omega\cap B(0,Kr)}\frac{\beta^2}{u_{\delta}^2}|\nabla u_{\delta}|^2
	\leq C \int_{\Omega\cap B(0,Kr)}\frac {\beta}{u_{\delta}} |\nabla u_{\delta}| \, |\nabla \beta|
	\\ \leq C \Big\{\int_{\Omega\cap B(0,Kr)}\frac{\beta^2}{u_{\delta}^2}|\nabla u_{\delta}|^2 \Big\}^{1/2}
\left(\int_{\Omega\cap B(0,Kr)}|\nabla\beta|^2\right)^{\frac{1}{2}}.
		\end{multline*}
We simplify and get, by definition of $\beta$, 
\begin{align}\label{4a8}
	\int_{\Omega\cap B(0,(K/2)r)} |\nabla v_{\delta}^+|^2
	= \int_{\Omega\cap B(0,(K/2)r)} \frac{|\nabla u_{\delta}|^2}{u_{\delta}^2}
	\leq \int_{\Omega\cap B(0,Kr)}\frac{\beta^2}{u_{\delta}^2}|\nabla u_{\delta}|^2
	\nonumber\\
	\leq C \int_{\Omega\cap B(0,Kr)}|\nabla\beta|^2
	\leq C r^{n-2}.
\end{align}
For the boundary term in \eqref{e:poincaredelta}, consider $\varphi\in {C}^{\infty}_c(B(0,2r))$ such that $0\leq \varphi\leq 1$, $\varphi=1$ on $B(0,r)$, $|\nabla\varphi|\leq C/r$. We use \eqref{subsolution} to get 
\begin{align*}
	\int_{\partial\Omega\cap B(0,r)}a\gamma_{\delta}(v_{\delta}^+)^2
	\leq \int_{\partial\Omega\cap B(0,2r)}a\gamma_{\delta}v_{\delta}v_{\delta}^+\varphi^2
	\leq -\int_{\Omega\cap B(0,2r)}A\nabla v_{\delta}\nabla(v_{\delta}^+\varphi^2)
\end{align*}
and since 
$\varphi^2 A\nabla v_{\delta}\nabla(v_{\delta}^+) 
= \varphi^2 A\nabla v_{\delta}^+\nabla v_{\delta}^+ \geq 0$, 
\begin{eqnarray} 
	\int_{\partial\Omega\cap B(0,r)}a\gamma_{\delta}(v_{\delta}^+)^2
	&\leq& - 2 \int_{\Omega\cap B(0,2r)} v_{\delta}^+ \varphi A\nabla v_{\delta}\nabla\varphi 
\nonumber\\
&\leq& C\left(\int_{\Omega\cap B(0,2r)}|\nabla v_{\delta}^+|^2\varphi^2\right)^{\frac{1}{2}}\left(\int_{\Omega\cap B(0,2r)}| v_{\delta}^+|^2|\nabla\varphi|^2\right)^{\frac{1}{2}}\\
\nonumber\\	
&\leq& \frac{C}{r}\left(\int_{\Omega\cap B(0,2r)}|\nabla v_{\delta}^+|^2\right)^{\frac{1}{2}}\left(\int_{\Omega\cap B(0,2r)}(v_{\delta}^+)^2\right)^{\frac{1}{2}}
\nonumber
		\end{eqnarray}
by Cauchy-Schwarz and the definition of $\varphi$. Return to \eqref{e:poincaredelta}; we obtain
		\begin{align*}
	\fint_{\Omega\cap B(0,2r)}(v_{\delta}^+)^2\leq \frac{C|\Omega\cap B(0,2r)|^{\frac{1}{2}}}{\gamma_{\delta} a r \sigma(B(0,r))}  
	\left(\int_{\Omega\cap B(0,(K/2)r)}|\nabla v_{\delta}^+|^2\right)^{\frac{1}{2}}\left(\fint_{\Omega\cap B(0,2r)}(v_{\delta}^+)^2\right)^{\frac{1}{2}}\\
			+Cr^{2-n}\int_{\Omega\cap B(0,(K/2)r)}|\nabla v_{\delta}^+|^2.
		\end{align*}
		Suppose first that 
		\begin{align*}
	r^{2-n}\int_{\Omega\cap B(0,(K/2)r)}|\nabla v_{\delta}^+|^2
	\geq\frac{|\Omega\cap B(0,2r)|^{\frac{1}{2}}}{\gamma_{\delta} a r \sigma(B(0,r))} 
	\left(\int_{\Omega\cap B(0,(K/2)r)}|\nabla v_{\delta}^+|^2\right)^{\frac{1}{2}}\left(\fint_{\Omega\cap B(0,2r)}(v_{\delta}^+)^2\right)^{\frac{1}{2}};
		\end{align*}
		then 
		\begin{align*}
			\fint_{\Omega\cap B(0,2r)}(v_{\delta}^+)^2\leq 2Cr^{2-n}\int_{\Omega\cap B(0,(K/2)r)}|\nabla v_{\delta}^+|^2\leq \widetilde{C}
		\end{align*}
		by \eqref{4a8}. Otherwise, we have that 
		\begin{align*}
	\fint_{\Omega\cap B(0,2r)}(v_{\delta}^+)^2\leq \frac{C|\Omega\cap B(0,2r)|^{\frac{1}{2}}}{\gamma_{\delta} a r \sigma(B(0,r))} 
	\left(\int_{\Omega\cap B(0,(K/2)r)}|\nabla v_{\delta}^+|^2\right)^{\frac{1}{2}}\left(\fint_{\Omega\cap B(0,2r)}(v_{\delta}^+)^2\right)^{\frac{1}{2}},
		\end{align*}
		from which we obtain 
		\begin{align*}
			\fint_{\Omega\cap B(0,2r)}(v_{\delta}^+)^2
			\leq \frac{C} {(\gamma_{\delta} a)^2 r^{2-n} \sigma(B(0,r))^2} 
		\int_{\Omega\cap B(0,(K/2)r)}|\nabla v_{\delta}^+|^2
		\leq \frac{C}{(\gamma_{\delta} a r^{2-n} \sigma(B(0,r)))^2} 
		\end{align*}
		by \eqref{4a8} again. In any case we get 
		\begin{align*}
			\fint_{\Omega\cap B(0,2r)}(v_{\delta}^+)^2
			\leq C\left(\frac{1}{\gamma_{\delta} a r^{2-n}\sigma(B(0,r))}\right)^2+C. %
		\end{align*}
		Then by \eqref{moser}
		\begin{align*}
			\sup_{\Omega\cap B(0,r)}(-\log u_{\delta})^+
			\leq C'\left(\frac{1}{\gamma_{\delta} a r^{2-n} \sigma(B(0,r))}\right)+C' %
		\end{align*}
		and finally 
		\begin{align*}
			\inf_{\Omega\cap B(0,r)}u_{\delta}
			\geq \exp\left(-\frac{C}{\gamma_{\delta} a r^{2-n} \sigma(B(0,r))}-C\right). %
		\end{align*}
		Sending $\delta$ to $0$ we get \eqref{density}. 
		Note that if $1<u_\delta<\gamma,$ then the estimate on $\sup_{\Omega\cap B(0,r)}(-\log u_{\delta})^+$ is not informative, but the theorem is of course true automatically in this case.
	\end{proof}

	We now state a result that we need, together with Theorem \ref{th:neumannharnack}, in order to prove oscillation estimates in the small scales regime $ar^{2-n} \sigma(B(0,r))\lesssim 1$. 
We will use the tent domain $T = T(0,2r)$ from Lemma \ref{l:tentspaces}. 
We remind the reader of the notation $S(0,2r) = \d T(0,2r) \cap \Omega$
and $\Gamma(0,2r) = \d T(0,2r) \cap \d\Omega$ for the two pieces of boundary. 

	\begin{lemma}
		\label{th:neumannsub}
		Assume $0 \in \d\Omega$ and $r < \diam(\d\Omega)/4$.
		 Let $u\in W^{1,2}(T(0,2r))$ be bounded, with $\Tr(u)=0$ on 
		$S(0,2r) = \d T(0,2r) \cap \Omega$. Assume that for some number $\tau \geq 0$, $u$ satisfies the inequality 
		\begin{align}
			\label{e:neumannsub2}
	\int_{T(0,2r)}A\nabla u\nabla \varphi \leq  \tau \int_{\Gamma(0,2r)} \varphi d\sigma
		\end{align}
		for every $\varphi \in W^{1,2}(\Omega)$ such that $\varphi \geq 0$ on $T(0,2r)$ and 
		$\Tr(\varphi) = 0$ on $S(0,2r)$.
		Then 
		\begin{align}
			\label{e:neumannmoser}
			u \leq C  \tau  r^{2-n} \sigma(B(0,r)) \quad \text{ on } B(0,r), 
		\end{align}
where $C$ depends only on the geometric constants of $\Omega$ and the ellipticity of $A$. 
	\end{lemma}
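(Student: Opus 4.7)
The plan is to dominate $u$ by a natural barrier $v$ on $T = T(0, 2r)$, then control $v$ through an energy estimate together with Lemma \ref{lem:neumannMoser}. Since $T$ is itself a one-sided NTA pair of mixed dimension by Lemma \ref{l:tentspaces}, I will invoke Theorem \ref{th:existencerobin2} with $\Sigma = S$ and data $\tau \mathbf{1}_\Gamma$ to produce $v \in W^{1,2}(T)$ with $\mathrm{Tr}(v)|_S = 0$ and $\int_T A\nabla v \cdot \nabla \varphi = \tau \int_\Gamma \varphi\, d\sigma$ for every $\varphi \in W^{1,2}(T)$ with $\mathrm{Tr}(\varphi)|_S = 0$. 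Testing the equation against $v^-$ and against $(u-v)^+$, both admissible because they vanish on $S$, gives $v \geq 0$ and $u \leq v$ in $T$ by the usual strict-ellipticity argument (gradient of the offending part must vanish, then use connectedness of $T$ and the zero trace on $S$). It therefore suffices to bound $\sup_{B(0, r) \cap \Omega} v$.

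Testing the equation for $v$ against $v$ itself yields, via ellipticity and Cauchy--Schwarz on $\Gamma$, $\int_T |\nabla v|^2 \leq C \tau \sigma(\Gamma)^{1/2} \|v\|_{L^2(\Gamma)}$. I will combine this with the trace--Poincar\'e bound $\|v\|_{L^2(\Gamma)}^2 \leq C r^{2-n} \sigma(B(0, r)) \int_T |\nabla v|^2$, which I derive by extending $v$ by zero to $\Omega$ (permissible because $\mathrm{Tr}(v)|_S = 0$ and $S \subset \Omega$) and applying Lemma \ref{th:Poincare3bis} at $x = 0$ on a scale $\asymp r$, choosing the averaging set $E$ to be a corkscrew ball of $\Omega$ lying outside $T$ so that $\bar v = 0$, and invoking $\sigma(\Gamma) \leq C \sigma(B(0, r))$ by doubling. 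Together these give $\int_T |\nabla v|^2 \leq C \tau^2 r^{2-n} \sigma(B(0, r))^2$. Applying Lemma \ref{lem:Poincare2} to the same zero-extension on a ball $B(0, 3Kr)$ then produces $\|v\|_{L^2(T)}^2 \leq C r^2 \int_T |\nabla v|^2$, so that $\bigl(\fint_{B(0, 2r) \cap \Omega} v^2\bigr)^{1/2} \leq C \tau r^{2-n} \sigma(B(0, r))$.

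The final step rests on a geometric observation about the tent construction: because $T$ is open and $B(0, 2r) \cap \Omega \subset T$, we have $S \cap B(0, 2r) = \emptyset$ and in fact $T \cap B(0, 2r) = \Omega \cap B(0, 2r)$. Consequently, any $\varphi \in W^{1,2}(\Omega \cap B(0, 2r))$ compactly supported in $B(0, 2r)$ extends by zero to an admissible test on $T$ (its trace on $S$ vanishes because its support stays away from $S$), and substituting it into the equation for $v$ shows that $v$ satisfies the Neumann subsolution inequality required by Lemma \ref{lem:neumannMoser} on $\Omega \cap B(0, 2r)$ with flux parameter $\tau$. That lemma delivers $\sup_{B(0, r) \cap \Omega} v \leq C \tau r^{2-n} \sigma(B(0, r)) + C \bigl(\fint_{B(0, 2r) \cap \Omega} v^2\bigr)^{1/2} \leq C \tau r^{2-n} \sigma(B(0, r))$, and $u \leq v$ closes the argument. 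The main technical obstacle I anticipate is extracting exactly the sharp mixed-dimensional weight $r^{2-n} \sigma(B(0, r))$ in the trace--Poincar\'e inequality: although this quantity is the natural surface-density-to-volume ratio that appears throughout the paper, pulling it cleanly out of Lemma \ref{th:Poincare3bis} requires a careful choice of averaging set outside $T$, and one must also split off the regime $r \asymp \diam(\partial \Omega)$ and use the global Lemma \ref{th:GlobalPoincare3} there.
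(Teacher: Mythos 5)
Your argument is correct and is essentially the paper's own proof: the paper likewise builds the barrier via Theorem \ref{th:existencerobin2} on the tent pair $(T(0,2r),\sigma_\star)$ with data $\tau \mathbf{1}_{\Gamma(0,2r)}$, dominates $u$ by it through the same negative-part/ellipticity testing, and then combines an energy estimate (testing with the function itself), the trace--Poincar\'e inequalities, and Lemma \ref{lem:neumannMoser} (using that $\Omega\cap B(0,2r)\subset T(0,2r)$ so compactly supported tests extend) to bound the barrier by $C\tau r^{2-n}\sigma(B(0,r))$. The only differences are organizational: the paper first proves the bound for nonnegative subsolutions and runs the Poincar\'e step intrinsically on $(T,\sigma_\star)$, using that $\sigma_\star(S)\gtrsim \sigma_\star(\partial T)$ and the zero trace on $S$, whereas you estimate only the barrier and apply Lemma \ref{th:Poincare3bis} on $(\Omega,\sigma)$ after a zero extension across $S$ -- a standard step (the paper uses the same device in Lemma \ref{lem:Poincare2}), and the borderline regime $r\asymp\diam(\partial\Omega)$ that you flag is handled in the paper by the same kind of remark.
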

	
	\begin{proof} 
	By the same scaling argument we used at the beginning of the proof of Lemma \ref{lem:neumannMoser}, we can assume that $r=1$, and even $\sigma(B(0,1)) = 1$.
Set $T = T(0,2)$, $\Gamma = \Gamma(0,2)$, and $S = S(0,2)$ to save notation.
	Let us prove the result first when $u \geq 0$. Using ellipticity we get 
		\begin{align*}
			\int_{T} |\nabla u|^2\leq C \int_{T}A\nabla u\nabla u
			\leq C \tau \int_{\Gamma} u d\sigma
		\end{align*} 
because 
$\varphi = u$ is an allowable test function in \eqref{e:neumannsub2}.
Call $\sigma_\star$ the doubling measure constructed on $T \cap B(0,2)$, and 
recall that its restriction to $\Gamma$ is $\sigma$.
Observe that $\sigma_\star(S) \geq C^{-1} \sigma_\star(\d T \cap B(0,2)) 
\geq C^{-1}(\Gamma)$ because $\sigma_\star$ is doubling 
and the Harnack chain condition gives a large ball in $S \sm \Gamma$; this is why we assumed
that $r < \diam(\d\Omega)/4$, to make sure that $\Omega$ connects a corkscrew ball for $B(0,r)$
to the exterior of $B(0,2r)$. 
Then by Cauchy-Schwarz and Corollary \ref{th:Poincare0} we get that
  $$ 
\tau \int_{\Gamma}u d\sigma \leq C \tau \left(\int_{\Gamma} |u|^2 d\sigma \right)^{1/2}
\leq C \tau \left(\int_{T}|\nabla u|^2\right)^{1/2}. 	
$$ 
	Combined with the above, this gives the energy estimate 
		\begin{equation}\label{e:energyest}
	\left(\int_{T} |\nabla u|^2\right)^{1/2} \leq C \tau. 		
	\end{equation} 
		By Lemma \ref{lem:neumannMoser}, 
		\begin{align}
			\label{e:kim}
			\sup_{\Omega\cap B(0,1)} u
		\leq C\left(\fint_{T} u^{2}\right)^{\frac{1}{2}}+C \tau. 		
		\end{align}
	Call $\ol u$ the average of $u$ on $T \cap B(0,2)$, 
	and apply our various Poincar\'e estimates to $\Omega' = T = T(0,2)$. 
	Lemma \ref{th:Poincare1} (with $p=2$ and $E = T \cap B(0,2))$ 
	says that
	\begin{align*}
	\fint_{T} u^{2}\leq 2 \ol u^2 + 2 \fint_{T} |u-\ol u|^{2}
	\leq 2 \ol u^2 + C\fint_{T} |\nabla u|^{2}
	\end{align*} 
	(notice that $K$ disappears because $T\cap B(0,2K) \subset T(0,2)$ trivially),
	while by Lemma \ref{th:Poincare3bis} (applied to a larger ball),
	\begin{align*}
	\ol u^2 = \fint_{S} |\Tr(u)- \ol u|^2 d\sigma_\star
	\leq C \fint_{\d T} |\Tr(u)- \ol u|^2 d\sigma_\star
	\leq C\fint_{T} |\nabla u|^{2}, 
	\end{align*} 
	so that altogether
	\begin{align*}
\fint_{T} u^{2} d\sigma \leq C\fint_{T} |\nabla u|^{2} \leq C \tau^2
	\end{align*} 
	by \eqref{e:energyest}, and the desired inequality \eqref{e:neumannmoser} follows from \eqref{e:kim}.
		This finishes the proof for $u\geq 0$.
	
	For general $u$, our strategy is to build a solution (rather than a subsolution) to \eqref{e:neumannsub2} and apply the maximum principle to conclude. 
Apply Theorem \ref{th:existencerobin2} to the pair $(T(0,2), \sigma_\star)$, 
with $\Sigma = S$  
(recall that $\sigma_\star(\Sigma) \geq C^{-1} \sigma_\star(\d T) > 0$
because $\sigma_\star$ is doubling), and choose the data $\psi = \tau {\bf {1}}_{\Gamma(0,2)}$.
	We get a function $u_0 \in W^{1,2}(T)$ such that $\Tr(u_0) = 0$ on $S$ and 
	\begin{align*}
		\int_{T} A\nabla u_0\nabla \varphi = \tau \int_{\Gamma} \varphi d\sigma
		\end{align*} 
for every $\varphi \in W^{1,2}(\Omega)$ such that $\varphi \geq 0$ on $T$ and $\Tr(\varphi) = 0$ on $S$.  (Observe that all the integrals are actually over $\Gamma$, 
where $\sigma_\star = \sigma$.) 
	Notice that $u_0$ is non-negative by a simple accretivity argument (i.e. testing against $\varphi = u_0^-$, see the argument below). So \eqref{e:neumannmoser} holds for $u_0$.
	
	Now let $u$ be as in the statement (potentially changing sign), and set $v = u_0-u$. If $u_0, u$ are classical (sub)solutions the maximum principle would immediately imply $v \geq 0$ and thus the desired bound for $u$. For completeness, we show $v\geq 0$ for these weak (sub)solutions.
	By \eqref{e:neumannsub2} and the definition of $u_0$,
we get that $\int_{T(0,2)}A\nabla v\nabla \varphi \geq 0$ for every $\varphi \in W^{1,2}(\Omega)$ 
such that $\varphi \geq 0$ on $T$ 
and $\Tr(\varphi) = 0$ on $S$ 
Taking $\varphi = v^-$, we get
	\begin{align*}
	0 \leq \int_{T} A\nabla (u_0-u) \nabla (u_0 -u)^-  
	= - \int_{T} A\nabla (u_0-u)^- \nabla (u_0 -u)^-
		\end{align*} 
	because we can integrate only where $u_0 - u < 0$. By ellipticity, $\nabla (u_0-u)^- = 0$, 
	and then $(u_0-u)^- = 0$ because we control the values on $S$ and 
	$\sigma_\star(S) > 0$  
	In this case, $u \leq u_0$ and \eqref{e:neumannmoser} for $u$ follows 
	from \eqref{e:neumannmoser} for $u_0$.
	\end{proof}

	Using the above lemma and Theorem \ref{th:neumannharnack} we want to prove a  Harnack-type inequality for Robin solutions at small scales.
	
		\begin{theorem}
		\label{th:robinharnack}
	Let $K > 1$ be large enough, depending only on the geometric constants 
	for $(\Omega, \sigma)$, and assume that $0 \in \d\Omega$ and 
	$4Kr < \diam(\d\Omega)$. 
	Let 
	$u\in W^{1,2}(\Omega \cap B(0,K^2r))$  be a nonnegative weak solution of the Robin problem 
	(see \eqref{weakformloc} and Remark \ref{rem:tracevszero})
		\begin{align} \label{4a13}
			\begin{cases}
		-\diver\left(A\nabla u\right)=0 &\text{in} \ \Omega\cap B(0,K^2r),\\
		\frac{1}{a}A\nabla u\cdot \nu+u=\beta &\text{on} \ \partial \Omega\cap B(0, K^2r),
			\end{cases}
		\end{align} 
	for some nonnegative $\beta\in L^{\infty}(\partial \Omega\cap B(0,K^2r))$. 
	There exists $c_0 \in (0,1)$ and $\theta \in (0,1)$, that depend on the geometric constants 
	of $(\Omega, \sigma)$     
	and the ellipticity constants for $A$, such that if 
\begin{equation} \label{e:notgenuine}
a r^{2-n} \sigma(B(0,r)) \leq c_0 
\end{equation}
and  
\begin{equation}\label{e:notgenuine2}
 0 \leq \beta \leq \frac{c_0}{ar^{2-n} \sigma(B(0,r))} 
		\, \sup_{\Omega\cap B(0,K^2r)}u
		\ \ \text{ a.e. on } \partial \Omega\cap B(0,K^2r), 
\end{equation}
		then 
		$$\inf_{\Omega \cap B(0,r)} u \geq \theta \sup_{\Omega \cap B(0,r)} u.$$
		\end{theorem}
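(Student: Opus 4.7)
The plan is to reduce the Robin Harnack inequality to the Neumann Harnack inequality (Theorem~\ref{th:neumannharnack}) by writing $u = v + w$ inside a tent domain, where $v$ is Neumann-harmonic and $w$ is small because $a r^{2-n}\sigma(B(0,r)) \leq c_0$. Set $M := \sup_{\Omega \cap B(0, K^2 r)} u$, finite by Moser for Neumann subsolutions (Lemma~\ref{lem:neumannMoser}) applied with $\tau = a \|\beta\|_\infty$. Let $K_H$ be the constant of Theorem~\ref{th:neumannharnack} and $K_T$ the tent constant of Lemma~\ref{l:tentspaces}, and take $K$ large enough to accommodate $K_H$, $K_T$, and the doubling constants of $\sigma$. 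Form $T := T(0, 2 K_H r)$ with boundary pieces $\Gamma$ and $S$ as in \eqref{2m33}. Applying Theorem~\ref{th:existencerobin2} to the pair $(T, \sigma_\star)$ with $\Sigma = S$, after subtracting $u$ to reduce to zero Dirichlet data, produces $v \in W^{1,2}(T)$ with $v|_S = u|_S$ and zero conormal data on $\Gamma$; then $w := u - v$ satisfies $w|_S = 0$ and $A\nabla w \cdot \nu = a(\beta - u)$ on $\Gamma$.

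Since $u \geq 0$ on $S$, the weak maximum principle gives $v \geq 0$. Since $S \subset B(0, 2 K_H K_T r) \setminus B(0, 2 K_H r)$, any admissible test function for Theorem~\ref{th:neumannharnack} in $\Omega \cap B(0, K_H r)$ extends by zero into $T$ without reaching $S$, so $v$ is Neumann-harmonic there in the required sense, and Theorem~\ref{th:neumannharnack} yields $\inf_{B(0, r)} v \geq \theta_N \sup_{B(0, r)} v$. For $w$, the bound $|\beta - u| \leq \|\beta\|_\infty + M$ on $\Gamma$ places both $w$ and $-w$ in the scope of Lemma~\ref{th:neumannsub} with $\tau = a(\|\beta\|_\infty + M)$; combined with the doubling of $\sigma$ and the hypotheses \eqref{e:notgenuine}, \eqref{e:notgenuine2}, one gets $|w| \leq C_1 c_0 M$ on $B(0, K_H r)$. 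Writing $U_\pm, V_\pm$ for the sup/inf on $B(0,r)$ of $u,v$ and $W := \sup_{B(0,r)} |w|$, these combine to
\[
U_- \geq V_- - W \geq \theta_N V_+ - W \geq \theta_N U_+ - (1 + \theta_N) C_1 c_0 M.
\]
If $U_+$ exceeds a fixed multiple of $c_0 M$, this immediately closes with $\theta = \theta_N / 2$.

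The main obstacle is the residual case where $U_+ \ll M$: the error term in the previous estimate scales with the global supremum $M$ rather than the local $U_+$. I handle it by reapplying the entire decomposition at a larger scale $s := K_0 r$, with $K_0 := K^2 / (2 K_H K_T)$. By the asymptotic dimensional lower bound \eqref{1n3} and the doubling of $\sigma$, the effective small-scale parameter $a s^{2-n}\sigma(B(0,s))$ increases by at most a multiplicative factor $C_D K_0^{d'+2-n}$, bounded in terms of the geometric constants; hence after shrinking $c_0$ we remain in the small-scale regime at scale $s$ and the previous paragraph applies there verbatim. Using the Harnack chain property \ref{CC2}, applied to a point of $B(0, K^2 r)$ where $u$ is comparable to $M$ (replaced if needed by a nearby corkscrew to ensure interior Harnack applies), one obtains $\sup_{B(0, K_0 r)} u \geq c_K M$ for some $c_K > 0$ depending only on $K$ and the geometry. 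Applying the combined inequality at scale $s = K_0 r$ then gives $\inf_{B(0, K_0 r)} u \geq \theta_N c_K M - C_2 c_0 M \geq (\theta_N c_K / 2) M$ for $c_0$ sufficiently small, and the monotonicity $\inf_{B(0, r)} u \geq \inf_{B(0, K_0 r)} u$ together with $U_+ \leq M$ yields the Harnack inequality with $\theta = \theta_N c_K / 2$.
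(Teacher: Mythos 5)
Your first two paragraphs reproduce, in slightly different packaging, the paper's own argument: your $w$ is the paper's corrector $h$ built from Theorem \ref{th:existencerobin2} with mixed data on the tent, the bound $|w|\leq C c_0 M$ via Lemma \ref{th:neumannsub} together with \eqref{e:notgenuine} and \eqref{e:notgenuine2} is exactly the paper's estimate $\|h\|_\infty\leq Cc_0M$, and the application of Theorem \ref{th:neumannharnack} to the Neumann-harmonic part is the same. The problem is the residual case $U_+\ll M$, which you correctly identify as the crux but do not actually resolve. The step ``using the Harnack chain property \ref{CC2}, applied to a point of $B(0,K^2r)$ where $u$ is comparable to $M$ (replaced if needed by a nearby corkscrew), one obtains $\sup_{B(0,K_0r)}u\geq c_K M$'' is a genuine gap. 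The near-maximum point $X$ of $u$ in $\Omega\cap B(0,K^2r)$ may lie arbitrarily close to $\partial\Omega$; condition \ref{CC2} then only provides chains whose length blows up as $\mathrm{dist}(X,\partial\Omega)\to 0$, so interior Harnack gives no constant $c_K$ depending only on $K$ and the geometry. Nor does ``replacing $X$ by a nearby corkscrew point'' help: there is no a priori comparison between $u(X)$ and the value of $u$ at a corkscrew point at a macroscopic scale -- that comparison is precisely a boundary Harnack-type statement, and it is false for general boundary value problems; for $u$ itself it cannot come from Lemma \ref{l:CSmax} either, since $u$ has conormal data $a(\beta-u)$ which is not nonpositive when $\beta>0$. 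Note also the self-defeating feature of the step: if interior chains could transport $u\approx M$ from an arbitrary point of $B(0,K^2r)$ into $B(0,K_0r)$, the same chains would transport it into $B(0,r)$, the residual case would never arise, and your two-scale detour would be unnecessary -- a sign that all of the difficulty has been hidden in this one assertion.

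The missing ingredient is the nonlocal maximum comparison of Lemma \ref{l:CSmax}, applied not to $u$ but to the corrected function: since $v$ (in your notation; $u-h+\|h\|_\infty$ in the paper's) is a nonnegative Neumann solution on the whole tent, Lemma \ref{l:CSmax} bounds its supremum over a large ball by its value at a corkscrew point of a much smaller ball, and this -- not interior Harnack chains -- is what lets one compare $\sup_{\Omega\cap B(0,r)}$ of the Neumann part with the global supremum $M$ appearing in \eqref{e:notgenuine2} (this is the content of the paper's line ``$\sup_{\Omega\cap B(0,r)}w\geq M$'' and of Remark \ref{r:harnackandproblem}). In your scheme the fix would be to run Lemma \ref{l:CSmax} on $v_s$ (or $v_s+m_s$) at the large scale $s=K_0r$ to convert $\sup$ over $B(0,cK^2r)$ of $u$ into $\sup_{B(0,s)}u$ up to an error $Cc_0M$, rather than trying to move a pointwise value of $u$ by interior Harnack. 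As written, the proposal does not prove the theorem.
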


	\begin{proof}	
		We pick $K > 1$ so large that $B(0,K^2r) \supset T(0,Kr)$, where $T(0,Kr)$ is the tent domain of Lemma \ref{l:tentspaces}. 
		We assumed that $4Kr < \diam(\d\Omega)$ to make sure that 
		$S(0,Kr) = \d T(0,Kr) \cap \Omega$ contains a reasonable ball, 
		hence has positive $\sigma_\star$ measure.
		By Theorem \ref{th:existencerobin2} 
		we can find $h\in W^{1,2}(T(0,Kr))$ such that $\mathrm{Tr}(h) = 0$ on $S(0,Kr)$
		and 
		\begin{equation} \label{4a16}
\frac{1}{a}\int_{T(0,Kr)} A\nabla h \nabla \varphi = \int_{\Gamma(0,Kr)} (\beta - u)\varphi
 	\end{equation} 	
	for all $\varphi \in W^{1,2}(T)$ such that $\Tr(\varphi) = 0$ on $S(0,Kr)$.
		
		We apply Lemma \ref{th:neumannsub} to $h$, with $\tau = a \sup_{T(0,Kr)}(\beta -u) \leq a \sup_{T(0,Kr)}\beta$, and get that 
		\begin{align*}
			\sup_{\Omega\cap B(0,r)} h\leq Cr^{2-n}\sigma(B(0,r))\tau \leq Cr^{2-n}\sigma(B(0,r))a\sup_{T(0,Kr)}\beta  \leq C c_0 \, \sup_{\Omega\cap B(0,K^2r)}u. 
		\end{align*}
		Notice that $-h$ satisfies the same equation, but with $u-\beta \leq u$, so the same argument, with $\tau = a \sup_{T(0,Kr)} u$,
		yields
		\begin{align*}
			-\inf_{\Omega\cap B(0,r)}h 
			\leq Car^{2-n} \sigma(B(0,r))\sup_{T(0, Kr)} u \leq C ar^{2-n} \sigma(B(0,r))\sup_{B(0,K^2r)\cap \Omega} u
			\leq C c_0 \sup_{B(0,K^2r)\cap \Omega} u
		\end{align*}
	by \eqref{e:notgenuine}. Thus, if we set $M = \sup_{B(0,K^2r)\cap \Omega} u$ and 
	$m = ||h||_{\infty}$, we get that $m \leq C c_0 M$  (we shall soon take $c_0$
	so small that $m \ll M$). 
		
	Our function $h$ was chosen to have the same Neumann data as $u$ on 
	$\Gamma(0,Kr) = \d \Omega \cap \d T(0,Kr)$ (see \eqref{weakformloc}). 
	Notice that $u-h$ lies in $W^{1,2}(\Omega \cap B(0,Kr))$ because 
	$\Omega \cap B(0,Kr) \subset T(0,Kr)$ and it satisfies the Neumann condition \eqref{e:neumann}
	(by \eqref{weakformloc} and \eqref{4a16}). 
	Apply Theorem~\ref{th:neumannharnack} to $w = u-h + m$ (which is non-negative), to obtain 
\begin{equation} \label{4a17}
\inf_{\Omega\cap B(0,r)}w\geq \eta \sup_{\Omega \cap B(0,r)}w.
\end{equation}
But $\sup_{\Omega \cap B(0,r)}w \geq M -m+m = M$,
and therefore 
\begin{align*}
		\inf_{\Omega \cap B(0,r)} u \geq \inf_{\Omega \cap B(0,r)} w - 2m
		\geq \eta M - 2m \geq \frac{\eta M}{2}
\end{align*}		
	if $c_0$ is small enough (depending on $\eta$ which itself depends only on the usual constants). This gives the desired result, with $\theta = \eta/3$.
	\end{proof}

\begin{remark}\label{r:harnackandproblem}
	If in Theorem \ref{th:robinharnack} $\beta=0$, i.e. if $u$ is a solution with locally homogeneous 
Robin data, then the above proof shows that $\inf_{\Omega\cap B(0,r)}u\geq c \sup_{\Omega\cap B(0,r)}u$, as long as $ar^{2-n} \sigma(B(0,r)) \leq c_0$ 
(i.e. without the need for \eqref{e:notgenuine2}), 
which is a genuine Harnack inequality at small scales. To see this, note that in this case, 
$h^+ = 0$ 
and by Lemma \ref{l:CSmax}, $\sup_{\Omega\cap B(0,r)}u\simeq \sup_{\Omega\cap B(0,K^2r)}u$.
\end{remark} 

Recall from the introduction a problem raised in \cite{BBC}:

\begin{problem}[c.f. Problem 1.2 in \cite{BBC}]\label{probproblem}
	Let $B\subset \Omega$ and let $u$ solve \begin{align}
		\label{e:probproblem}
		\begin{cases}
			-\mathrm{div}\left(A\nabla u\right)=0 &\text{in} \ \Omega\backslash B,\\
			\frac{1}{a}A\nabla u\cdot \nu +u=0 &\text{on} \ \partial \Omega,\\
			u = 1 &\text{on} \ B.
		\end{cases}
	\end{align}
	Characterize the $\Omega$ for which $\inf_{\Omega} u(x) > 0$ (in which case it is said the whole surface is ``active"). 
\end{problem}

\begin{proof}[Partial Solution of Problem \ref{probproblem}]
Theorem \ref{th:robinharnack} allows us to partially address this question. In particular, it follows immediately from that theorem (and Remark \ref{r:harnackandproblem}) 
that for any domain considered here, the whole boundary is active. In fact, we can show that $\inf_{\Omega} u > \delta > 0$ where $\delta > 0$ depends only on the geometric constants of $\Omega$, the ellipticity of $A$, the radius of $B$, the diameter of $\Omega$ and the distance from $B$ to $\partial \Omega$.  

In \cite{BBC}, they show that the whole boundary is ``active" for all domains which are ``well approximated by Lipschitz domains" (see \cite[Definition 2.1]{BBC}) and also give some examples of cusp domains for which the whole boundary fails to be active.  We emphasize that the domains considered here and those studied in \cite{BBC} are overlapping but distinct classes, and so our Theorem \ref{th:robinharnack} neither implies nor is implied by the main theorems in \cite{BBC}.
\end{proof}

We prove now oscillation estimates -- in the small scales regime we use Theorem \ref{th:robinharnack} while in the large scales regime we use Lemma \ref{lem:density}.

\begin{theorem}\label{t:RobinOsc}
Kor $K > 1$ large enough, depending on the geometric constants of $(\Omega, \sigma)$ 
there is a constant $\eta \in (0,1)$, that depends also 
 on the ellipticity constant (but not on $a$), such that if the data $f$ is bounded on $\d\Omega$, and
$u\in W^{1,2}(\Omega\cap B(0,Kr))$ is a bounded weak solution of the Robin problem (see \eqref{weakformloc} and Remark \ref{rem:tracevszero})
	\begin{align*}
		\begin{cases}
			-\diver A\nabla u=0 &\text{in} \ \Omega\cap B(0,Kr),\\
			\frac{1}{a}A\nabla u\cdot \nu+u=f &\text{on} \ \partial \Omega\cap B(0,Kr),
		\end{cases}
	\end{align*}
	then 
	\begin{align}
		\label{oscillation}
		\osc_{\Omega\cap B(0,r)}u\leq \eta \osc_{\Omega\cap B(0,Kr)}u 
		+ 2\osc_{\partial \Omega \cap B(0,Kr)} f. 
	\end{align}
\end{theorem}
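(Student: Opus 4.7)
The plan is to follow the Neumann oscillation argument of Lemma~\ref{lem:neummOsc}, splitting into two scale regimes: Theorem~\ref{th:robinharnack} will handle small scales, and Lemma~\ref{lem:density} will handle large scales. Setting $M = \sup_{\Omega \cap B(0, Kr)} u$, $m = \inf_{\Omega \cap B(0, Kr)} u$, $F = \sup_{\partial \Omega \cap B(0, Kr)} f$, and $g = \inf_{\partial \Omega \cap B(0, Kr)} f$, if $M - m \leq 2(F - g)$ then \eqref{oscillation} holds trivially, so I assume $M - m > 2(F - g)$. The nonnegative functions $v_1 = u - m$ and $v_2 = M - u$ solve Robin problems with data $f - m$ and $M - f$ and sum identically to $M - m$; hence $\sup_{\Omega \cap B(0, r)} v_j \geq (M - m)/2$ for some $j \in \{1, 2\}$, and by the symmetry $u \leftrightarrow M + m - u$, $f \leftrightarrow F + g - f$ I may assume $j = 1$.

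In the small scales regime $a r^{2-n}\sigma(B(0, r)) \leq c_0$, I would apply Theorem~\ref{th:robinharnack} to the shifted function $\widetilde{v} = u - \min(m, g)$, chosen so that its Robin data $\widetilde{\beta} = f - \min(m, g)$ is nonnegative. Under the standing assumption $F - g < (M-m)/2$ and the case reduction, one has $\sup \widetilde{\beta} \leq \tfrac{3}{2}(M - m)$ while $\sup_{\Omega \cap B(0, K^2 r)} \widetilde{v} \geq (M-m)/2$, so for $c_0$ small enough (depending only on the usual constants) the hypothesis $\widetilde{\beta} \leq \tfrac{c_0}{ar^{2-n}\sigma(B(0, r))}\sup \widetilde{v}$ of Theorem~\ref{th:robinharnack} is met. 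That theorem then yields $\inf_{\Omega \cap B(0, r)} \widetilde{v} \geq \tfrac{\theta}{2}(M - m)$, whence $\inf_{\Omega \cap B(0, r)} u \geq m + \tfrac{\theta}{2}(M - m) - (F - g)$ and \eqref{oscillation} with $\eta = 1 - \theta/2$.

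In the large scales regime $a r^{2-n}\sigma(B(0, r)) > c_0$, I plan to apply Lemma~\ref{lem:density} via a comparison supersolution. Using an existence result in the spirit of Theorem~\ref{th:existencerobin2} on the tent domain $T(0, Kr)$ from Lemma~\ref{l:tentspaces}, I would construct $w \in W^{1,2}(T(0, Kr))$ solving the Robin problem on $\Gamma(0, Kr)$ with constant Robin data $\gamma \asymp M - m$ and appropriate (large) Dirichlet data on $S(0, Kr)$, so that $w$ dominates $v_2 = M - u$ by a maximum principle argument. Applying Lemma~\ref{lem:density} (after rescaling so the effective Robin data becomes $1$ and $w \leq 1$ on $\Gamma(0, Kr)$) gives $\inf_{\Omega \cap B(0, r)} w \gtrsim (M - m) \exp(-c / (a r^{2-n}\sigma(B(0, r))))$; since $a r^{2-n}\sigma(B(0, r)) \geq c_0$ in this regime, the exponential is bounded below by a positive constant, and the desired oscillation decay follows by transferring the bound to $v_2$.

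The hard part will be matching the Robin data range $[g, F]$ of $f$ against the function range $[m, M]$ of $u$, which need not be comparable since $u$ is free on the artificial inner boundary $\Omega \cap \partial B(0, Kr)$. The reduction $M - m > 2(F - g)$ is precisely what will let me absorb this discrepancy into universal constants when verifying the data bounds of Theorem~\ref{th:robinharnack} (which requires nonnegative data bounded by the function supremum) and Lemma~\ref{lem:density} (which requires the Robin data to dominate the boundary function values), via the shifts by $\min(m, g)$ or $\max(M, F)$ and via the comparison supersolution construction in the large scales case.
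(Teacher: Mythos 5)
Your two-regime skeleton (Harnack at small scales, density lemma at large scales) is the same as the paper's, and the reduction to $M-m>2(F-g)$ is fine, but there is a genuine gap exactly at the point you flag as ``the hard part'': the position of the data interval $[g,F]$ relative to the solution's range $[m,M]$. The reduction controls only $\osc f$, not the distance between the two intervals, and the shift by $\min(m,g)$ does not absorb it. Concretely, if $[g,F]$ lies strictly above $[m,M]$ (e.g.\ $f$ a huge constant while $u$ has moderate values, which the hypotheses do not exclude since $u$ is only a local solution, free on $\Omega\cap\partial B(0,Kr)$), then $\min(m,g)=m$ and $\sup\widetilde\beta=F-m$ is not $\leq\frac32(M-m)$; indeed for the model solution $u=-acy$ with $f\equiv c$ on a flat boundary piece one has $F-m\approx c\gg M-m\approx acKr$, so the hypothesis \eqref{e:notgenuine2} of Theorem \ref{th:robinharnack} cannot be verified by your argument (recall $c_0$ there is a fixed constant of that theorem, and shrinking the threshold defining ``small scales'' does not help). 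Symmetrically, if $[g,F]$ lies strictly below $[m,M]$ (e.g.\ $f\equiv0$ and $u=10(1+ay)$, so $\osc f=0$ and $m\approx10\gg M-m$), the data bound is fine but your transfer ``$\inf_{B(0,r)}u\geq m+\frac\theta2(M-m)-(F-g)$'' silently uses $F\geq m$; what Theorem \ref{th:robinharnack} actually gives is $\inf_{B(0,r)}(u-g)\geq\theta\sup_{B(0,r)}(u-g)$, i.e.\ an estimate relative to the level $g\ll m$, which says nothing about $\osc_{B(0,r)}u$. The missing ingredient is the dichotomy the paper runs \emph{inside} the small-scale regime when the two intervals are disjoint: normalize by $M_K-m_K$; if the normalized data is below $c_0/(ar^{2-n}\sigma(B(0,r)))$, Theorem \ref{th:robinharnack} applies, and if it is above that threshold, apply Lemma \ref{lem:density} with large $\gamma$ --- its hypothesis ``$u\leq\gamma$ on $\partial\Omega\cap B(0,Kr)$'' holds precisely because the data dominates the boundary values of the normalized solution --- yielding a lower bound $c_1e^{-c_2/c_0}$ independent of how large the data is.

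Your large-scale sketch also needs repair. The comparison is stated backwards: if $w\geq v_2=M-u$, a lower bound on $\inf w$ transfers nothing to $v_2$; you would need a minorant, and then its Robin data must sit below $M-f$, which is again not guaranteed (when $f$ is large, $M-f$ can even be negative), so the same range-matching issue reappears. Moreover the tent-domain construction is unnecessary: Lemma \ref{lem:density} is already formulated for local supersolutions on $\Omega\cap B(0,Kr)$ and only needs the trace bound on $\partial\Omega\cap B(0,Kr)$, so, as in the paper, after the trivial reduction one applies it directly to $(M_K-u)/(M_K-\overline f)$ or to $(u-m_K)/(\underline f-m_K)$, choosing according to whether $\overline f\leq M_K-\frac12(M_K-m_K)$ or $\underline f\geq m_K+\frac12(M_K-m_K)$ (one of the two must hold once $\osc f\leq\frac12(M_K-m_K)$); in either case the normalized data is $\geq1$, the normalized function is $\leq2$ on the boundary, and the lemma gives the definite lower bound since $ar^{2-n}\sigma(B(0,r))\gtrsim c_0$. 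In particular, the choice between working with $u-m$ or $M-u$ must be dictated by where $f$ sits, not by which of the two has large supremum on $B(0,r)$ (your ``$j=1$'' reduction is orthogonal to the real difficulty; also, the correct reflection pairs $u\mapsto M+m-u$ with data $M+m-f$, not $F+g-f$, though that is cosmetic).
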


\begin{proof}[Proof of Theorem \ref{t:RobinOsc}]
	We define $M_{K}=\sup_{\Omega\cap B(0,Kr)}u$, $m_{K}=\inf_{\Omega\cap B(0,Kr)}u$, $M=\sup_{\Omega\cap B(0,r)}u$, $m=\inf_{\Omega\cap B(0,r)}u$, $\overline{f}=\sup_{\partial \Omega \cap B(0,Kr)} f$, $\underline{f}=\inf_{\partial \Omega \cap B(0,Kr)} f$ and finally $\overline{M}=\max\{M_{K},\overline{f}\}$, $\underline{m}=\min\{m_{K},\underline{f}\}$. We need to prove that
	\begin{equation} \label{4a20}
M-m\leq \eta (M_{K}-m_{K})+ 
2(\overline{f}-\underline{f}).
\end{equation}
To save notation, we set $\sigma_r = \sigma(B(0,r))$ in the computations below.
	
\smallskip\noindent	\textbf{Small scales.} 
Suppose $2ar^{2-n} \sigma_r \leq c_0$,  
where $c_0$ comes from Theorem \ref{th:robinharnack}. We define 
	\begin{align*}
		v=\frac{u-\underline{m}}{\overline{M}-\underline{m}}.
	\end{align*}
	Clearly $v$ satisfies $-\diver A\nabla v=0$ and $0\leq v \leq 1$. Note that 
	\begin{align*}
		\frac{1}{a}A\nabla v\cdot \nu=\frac{f-u}{\overline{M}-\underline{m}}=\frac{f-(\overline{M}-\underline{m})v-\underline{m}}{\overline{M}-\underline{m}},
	\end{align*}
	(weakly), and therefore 
	\begin{align*}
		\frac{1}{a}A\nabla v\cdot \nu+v=\frac{f-\underline{m}}{\overline{M}-\underline{m}}=\vcentcolon \beta.
	\end{align*}	
	Note that $0\leq\beta\leq 1$ and hence by the assumption on $r$, 
	$\beta \leq c_0 [ar^{2-n} \sigma_r]^{-1}$.  
	We want to apply Theorem~\ref{th:robinharnack},
	so we assume that the present $K$ is larger than $K^2$ there. 
	
	First assume that $c_0 [ar^{2-n} \sigma_r]^{-1} \sup_{\Omega\cap B(0,r)}v \geq 1$.
	\\ 
	In this case $0 \leq \beta \leq 1 \leq c_0 [ar^{2-n} \sigma_r]^{-1} 
	\sup_{\Omega\cap B(0,Kr)}v$, 
	Theorem~\ref{th:robinharnack} applies to $v$, and we get that 
	\begin{align} \label{4a21}
		\inf_{\Omega\cap B(0,r)}v \geq \theta  \sup_{\Omega\cap B(0,r)}v 
		\geq  \theta \, \frac{ar^{2-n} \sigma_r}{c_0} 
	\end{align}
	and therefore 
	$u - \underline{m} = (\overline{M}-\underline{m})  v \geq \theta\frac{ar^{2-n} \sigma_r}{c_0} (\overline{M}-\underline{m})$
	on $\Omega\cap B(0,r)$, 
	whence $m \geq \underline{m} +  \theta \, \frac{ar^{2-n} \sigma_r} 
	{c_0}(\overline{M}-\underline{m})$ and 
	(since obviously $\underline{m}\leq m$)
	\begin{align}  \label{4a22}
		M-m\leq M-\underline{m}
		\leq \left(1-\theta \, \frac{ar^{2-n} \sigma_r} 
		{c_0}\right)(\overline{M}-\underline{m}).
	\end{align} 
	In the other case when $c_0 [ar^{2-n} \sigma_r]^{-1}  
	\sup_{\Omega\cap B(0,r)}v \leq 1$, or equivalently
	$c_0 [ar^{2-n} \sigma_r]^{-1} 
	(M-\underline{m}) \leq \overline{M}-\underline{m}$, we directly get that
	\begin{align}  \label{4a23}
		M-m\leq M-\underline{m}\leq  \frac{ar^{2-n} \sigma_r} 
		{c_0} \, (\overline{M}-\underline{m}) \leq \frac12 (\overline{M}-\underline{m}).
	\end{align}
	In both cases, we proved that $M-m\leq  c \, (\overline{M}-\underline{m})$ for some $c \in (0,1)$.

	Suppose now first that the intervals $I_{K}=[m_{K}, M_{K}]$ and $I_f=[\underline{f},\overline{f}]$ intersect. 
	If either of them contains the other, we clearly have that 
	$\overline{M}-\underline{m}\leq M_{K}-m_{K}+\overline{f}-\underline{f}$ 
	and \eqref{4a20} holds. 
	Otherwise, suppose for instance that $m_{K}\leq \underline{f}\leq M_{K}\leq \overline{f}$; then again 
	$\overline{M}-\underline{m}=\overline{f}-m_{K}\leq M_{K}-m_{K}+\overline{f}-\underline{f}$. 
	The same is true in all the other case in which $I_{K}$ and $I_f$ intersect, so \eqref{4a20} holds then.
	
	So we may assume that $I_{K}\cap I_f=\emptyset$, and for instance suppose that $I_f$ lies to the right of $I_{K}$. 
	We define another auxiliary function: 
	\begin{align*}
		\widetilde{v}=\frac{u-m_{K}}{M_{K}-m_{K}}.
	\end{align*}
	Note that 
	\begin{align*}
		\frac{1}{a}A\nabla \widetilde{v}\cdot \nu+\widetilde{v}=\frac{f-m_{K}}{M_{K}-m_{K}}=\vcentcolon\widetilde{\beta},
	\end{align*}
	and since by assumption $f\geq M_{K}$, we have that $\widetilde{\beta}\geq1$ and moreover $\widetilde{v}\leq\widetilde{\beta}$. 
	We distinguish two cases:\\
	\textit{a)} $\widetilde{\beta}\geq c_0 [ar^{2-n} \sigma_r]^{-1}$. 
	We apply the Density Lemma \ref{lem:density} with $\gamma = 1$, 
	\begin{align*}
\inf_{\Omega\cap B(0,r)}v
\geq c_1\exp\left(-\frac{c_2}{\widetilde\beta ar^{2-n} \sigma_r}\right) 
	\geq c_1 \exp\left(-\frac{c_2}{c_0}\right) = c_3 > 0. 
	\end{align*}
		From this and the definition of $\widetilde{v}$ it follows easily that 
	$M-m\leq \eta(M_{K}-m_{K})$ for some $\eta \in (0,1)$, and we are done.
		\\
	\textit{b)} $\widetilde{\beta}< c_0 [ar^{2-n} \sigma_r]^{-1}$.
	 In this case we will apply again Theorem \ref{th:robinharnack}. 
	\\ 
	If  $c_0 [ar^{2-n} \sigma_r]^{-1} 
	\sup_{\Omega\cap B(0,r)}v < 1$, and  as in \eqref{4a23}, 
	we have $M-m\leq M-m_{K}\leq \frac{ar^{2-n} \sigma_r}{c_0}\,(M_{K}-m_{K}) 
	\leq  \frac12 \,(M_{K}-m_{K})$, which is what we want. 
		\\
	If instead $I_f$ lies to the left of $I_{K}$, we repeat the argument substituting $\overline{f}$ for $M_{K}$ and $\underline{f}$ for $m_{K}$ in the definition of $\widetilde{v}$.

	\medskip\noindent
	\textbf{Large scales.} We now assume that $2ar^{2-n} \sigma_r> c_0$. 
	Set $\Delta = M_K - m_k$. We may also assume that 
	$\overline{f} - \underline{f} \leq \Delta/2$ because otherwise 
	\eqref{4a20} follows directly since $M-m \leq \Delta$.
	Let us first assume that $\overline{f} \leq M_K - \Delta/2$,
	and use Lemma \ref{lem:density} to show that 
	\begin{align}
		\label{upper}
		M_{K} - M \geq \eta_1 (M_{K}-\overline{f})
	\end{align}
	for some $\eta_1 \in (0,1)$. Then \eqref{4a20} will follow, with $\eta  = 1-\eta_1$
	because $m \geq m_K$. Define 
		\begin{align*}
		v_1&=\frac{M_{K}-u}{M_{K}-\overline{f}} 
	\end{align*}
	and observe that
	\begin{align*}
		\frac{1}{a}A\nabla v_1\cdot \nu+v_1=\frac{M_{K}-f}{M_{K}-\overline{f}}=\vcentcolon \beta_1,
	\end{align*}
	with $\beta_1\geq 1$ (because $f \leq \overline{f}$ and the denominator is positive), 
and $0 \leq v_1 \leq 2$ because $M_{K}-f \leq M_k - m_K = \Delta \leq 2 (M_{K}-\overline{f})$.
So $v_1$ satisfies the hypotheses of Lemma \ref{lem:density} with $\gamma=2$, hence 
	\begin{align*}
	\inf_{\Omega\cap B(0,r)}v_1\geq c_1e^{-c_2/(2ar^{2-n} \sigma_r)} 
	\geq c_1e^{-c_2/(4c_0)}=: \eta'.
	\end{align*}
	and $M_K - M = M_K - \sup_{\Omega\cap B(0,r)} u = (M_K - \overline{f}) \inf_{\Omega\cap B(0,r)} v_1 = (M_K - \overline{f}) \eta' \geq \Delta \eta'/2$. So
\eqref{upper} holds with $\eta_1 = \eta'/2$.
	
	We may now assume that $\overline{f} > M_K - \Delta/2$, and 
	since $\overline{f} - \underline{f} \leq \Delta/2$, this implies that
	$\underline{f} \geq m_K + \Delta/2$. This time we do the symmetric construction. Consider 
	\begin{align*}
		v_2&=\frac{u-m_{K}}{\underline{f}-m_{K}},
	\end{align*}
	and observe that 
	\begin{align*}
		\frac{1}{a}\nu \cdot A\nabla v_2+v_2=\frac{f-m_{K}}{\underline{f}-m_{K}}=\vcentcolon \beta_2 \, , 
		\end{align*}
with $\beta_2 \geq 1$, that $v_2 \geq 0$ by definition of $\underline{f}$, 
and that $v_2 \leq 2$ because $u-m_K \leq \Delta$.
Lemma \ref{lem:density} 
applies, again with $\gamma=2$, and yields 
$\inf_{\Omega\cap B(0,r)}v_2\geq \eta'$, and we get that 
$m-m_K \geq \eta' (\underline{f}-m_{K}) \geq \eta' \Delta/2$, 
from which we can deduce \eqref{4a20}.
	So \eqref{4a20} holds in both cases, and Theorem \ref{t:RobinOsc} follows.
\end{proof}

As was suggested above, Theorem \ref{t:HolderContinuity}
follows from Lemma \ref{lem:neumannMoser}, which we can use to get a first $L^{\infty}$ bound,
and Theorem \ref{t:RobinOsc}, that gives the desired decay for the oscillation
of $u$ on small balls.

\section{Existence of Robin Harmonic Measure }\label{s:RRT}

As before, all the domains in this section are bounded, one-sided NTA domains $\Omega$,
for which there is a measure $\sigma$ on $\d\Omega$ such that 
$(\Omega, \sigma)$ is a pair of mixed dimension.
 Our goal in this section is to prove the existence of the 
 Robin harmonic measure, the existence of the Robin Green function and the relation between them.

The oscillation estimates of the previous sections first allow us to find a continuous {\it up to the boundary} solution to the Robin problem for any continuous data in such domains. 

\begin{lemma}\label{l:continuoussolution}
	For any $f\in C(\partial \Omega)$ there exists 
	$u\in W^{1,2}(\Omega)\cap C(\overline{\Omega})$ such that 
	$$\frac{1}{a}\int_{\Omega} A\nabla u\nabla \varphi +\int_{\partial \Omega} u\varphi = \int_{\partial \Omega} f\varphi, \qquad \forall \varphi \in C_c^\infty(\mathbb R^n).$$
\end{lemma}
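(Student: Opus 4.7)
The plan is to approximate $f \in C(\partial\Omega)$ uniformly by H\"older continuous data, solve the Robin problem for each approximation using the previously developed machinery, and then pass to the limit. A weak maximum principle will give uniform convergence (hence continuity up to the boundary), while weak compactness in $W^{1,2}(\Omega)$ will let us pass to the limit in the weak formulation.

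First I would extend $f$ to a continuous function on $\R^n$ by Tietze, mollify to obtain smooth $\widetilde f_k$, and set $f_k = \widetilde f_k|_{\partial \Omega}$. These are Lipschitz on $\partial \Omega$ (so in particular $C^{0,\beta}$ for any $\beta \in (0,1)$), and $f_k \to f$ uniformly on $\partial\Omega$; in particular, $\|f_k\|_{L^\infty(\partial\Omega)}$ is uniformly bounded and so is $\|f_k\|_{L^2(\sigma)}$ since $\sigma(\partial\Omega) < \infty$. Theorem~\ref{th:existencerobin} then produces unique $u_k \in W^{1,2}(\Omega)$ solving \eqref{weakform} with data $f_k$, with $\|u_k\|_W \leq C\|f_k\|_{L^2(\sigma)} \leq C$. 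Since $f_k$ is H\"older continuous and $u_k$ is a (local) weak Robin solution in the sense of \eqref{weakformloc} on every boundary ball, Theorem~\ref{t:HolderContinuity} gives $u_k \in C^{0,\alpha}(\overline\Omega) \subset C(\overline\Omega)$.

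The key step is to show $\{u_k\}$ is Cauchy in $C(\overline\Omega)$, for which I would establish a weak maximum principle: any weak Robin solution $v$ with data $g \in L^\infty(\partial\Omega)$ satisfies $\|v\|_{L^\infty(\Omega)} \leq \|g\|_{L^\infty(\partial\Omega)}$. Setting $M = \|g\|_{L^\infty}$ and testing against $\varphi = (v-M)^+ \in W^{1,2}(\Omega)$ (which is admissible because $C_c^\infty(\R^n)$ is dense in $W^{1,2}(\Omega)$ by \eqref{2a6}, so the weak formulation extends to all $W^{1,2}$ test functions), one obtains
\[
\frac{1}{a}\int_\Omega A\nabla (v-M)^+ \nabla (v-M)^+ \, dx + \int_{\partial \Omega} (\Tr(v)-g)(v-M)^+ \, d\sigma = 0.
\]
Both terms are nonnegative (the first by ellipticity, the second because $\Tr(v) \geq M \geq g$ where $(v-M)^+ > 0$), forcing $(v-M)^+ \equiv 0$; the analogous argument with $-v$ gives $v \geq -M$. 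Applying this to $u_k - u_j$ (which solves the Robin problem with data $f_k - f_j$) yields $\|u_k - u_j\|_{L^\infty(\overline\Omega)} \leq \|f_k - f_j\|_{L^\infty(\partial\Omega)} \to 0$, so $u_k \to u$ uniformly on $\overline\Omega$ for some $u \in C(\overline\Omega)$.

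Finally, since $\{u_k\}$ is bounded in $W^{1,2}(\Omega)$, a subsequence converges weakly in $W^{1,2}$ to some $u^* \in W^{1,2}(\Omega)$, and by the compactness of the trace in \eqref{2a3} of Theorem~\ref{prop:equivalence of norms}, $\Tr(u_k) \to \Tr(u^*)$ strongly in $L^2(\partial\Omega, \sigma)$. Passing to the limit in
\[
\frac{1}{a}\int_{\Omega}A\nabla u_k \nabla \varphi \, dx + \int_{\partial \Omega} \Tr(u_k)\,\varphi \, d\sigma = \int_{\partial \Omega} f_k\, \varphi \, d\sigma, \qquad \varphi \in C_c^\infty(\R^n),
\]
shows $u^*$ solves \eqref{weakform} with data $f$; the uniform limit $u$ must agree with $u^*$ almost everywhere, so $u \in W^{1,2}(\Omega) \cap C(\overline\Omega)$ is the desired solution. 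The only delicate point is the justification of the maximum principle test, which requires the density statement \eqref{2a6} to upgrade \eqref{weakform} from smooth test functions to truncated $W^{1,2}$ test functions; everything else is a routine approximation/compactness argument.
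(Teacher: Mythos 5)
Your argument is correct, but it takes a genuinely different (and longer) route than the paper. The paper's proof is two lines: Theorem \ref{th:existencerobin} already produces a weak solution for any $f\in L^2(\partial\Omega,d\sigma)\supset C(\partial\Omega)$, and continuity up to the boundary is read off directly from the oscillation estimate of Theorem \ref{t:RobinOsc} together with the Moser bound of Lemma \ref{lem:neumannMoser}: since $f$ is uniformly continuous on the compact set $\partial\Omega$, the term $\osc_{\partial\Omega\cap B(0,Kr)}f$ in \eqref{oscillation} tends to $0$ uniformly as $r\to 0$, and iterating the estimate gives a modulus of continuity for $u$ at every boundary point (interior continuity being De Giorgi--Nash--Moser). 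You instead approximate $f$ uniformly by Lipschitz data, invoke Theorem \ref{t:HolderContinuity} to get $u_k\in C(\overline\Omega)$, prove an $L^\infty$ contraction via the weak maximum principle, and pass to the limit using weak $W^{1,2}$ compactness and compactness of the trace. This is sound: your maximum-principle argument is essentially the paper's own Lemma \ref{l:maxprinc} (which does not rely on the present lemma, so there is no circularity), with the minor points that the conclusion $(v-M)^+\equiv 0$ from vanishing gradient and vanishing trace should be justified by connectivity/Poincar\'e (Corollary \ref{th:Poincare0}), and that applying Theorem \ref{t:HolderContinuity} on boundary balls uses the density of smooth test functions to pass from \eqref{weakform} to the local formulation \eqref{weakformloc}. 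What each approach buys: the paper's route avoids any approximation or limit passage but leaves implicit the iteration of \eqref{oscillation} for merely continuous data; yours makes the boundary-continuity mechanism transparent only for H\"older data and transfers it to continuous data by a standard $L^\infty$-contraction plus compactness argument, at the cost of front-loading the maximum principle and the weak-limit bookkeeping.
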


\begin{proof}
	The existence of a weak solution, even with data $f \in L^2(\d\Omega)$, follows from Theorem~\ref{th:existencerobin}. That this solution is continuous up to the boundary follows immediately from Theorem \ref{t:RobinOsc} and, as before,  Lemma \ref{lem:neumannMoser}.
\end{proof}

Before we can construct the Robin harmonic measure via the Riesz Representation theorem, we need a weak maximum principle:

\begin{lemma}\label{l:maxprinc}[Weak Maximum Principle]
	Let $u \in W^{1,2}(\Omega)$ solve 
	$$\frac{1}{a}\int_{\Omega} A\nabla u \nabla \varphi + \int_{\partial \Omega} u\varphi = \int_{\partial \Omega} f\varphi, 
	\quad\forall \varphi \in C_c^\infty(\mathbb R^n),
	$$ 
	where $L^2(\partial \Omega)\ni f\geq 0$ almost everywhere. Then $u\geq 0$ in $\Omega$.
\end{lemma}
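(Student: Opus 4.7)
The plan is the standard energy/testing argument for accretive problems, adapted to our Robin weak formulation. The idea is simply to test against the negative part $u^-$ of $u$ and use ellipticity together with the fact that $f\geq 0$.

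First I would upgrade the class of admissible test functions: Theorem~\ref{prop:equivalence of norms} gives that $C_c^\infty(\R^n)$ is dense in $W^{1,2}(\Omega)$ (property \eqref{2a6}) and that the trace operator is bounded. Both sides of the weak formulation are continuous in $\varphi\in W^{1,2}(\Omega)$ (the interior term via Cauchy--Schwarz and ellipticity bounds, and the boundary terms via the boundedness of $\Tr : W^{1,2}(\Omega)\to L^2(\d\Omega,\sigma)$). Hence the identity extends to all $\varphi\in W^{1,2}(\Omega)$.

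Next I would justify $\varphi=u^-$ as a test function. Since $t\mapsto t^-$ is Lipschitz and vanishes at $0$, we have $u^-\in W^{1,2}(\Omega)$ with $\nabla u^-=-\nabla u\cdot \mathbf 1_{\{u<0\}}$ (the chain rule/truncation lemma for Sobolev functions), and $\Tr(u^-)=(\Tr u)^-$ $\sigma$-a.e.\ on $\d\Omega$ (standard fact proved by approximation in $W^{1,2}$, since the trace is continuous and $t\mapsto t^-$ is Lipschitz). Writing $u$ in place of $\Tr u$ as the text does, we then compute, pointwise a.e.,
\[
A\nabla u\cdot \nabla u^- = -A\nabla u^-\cdot\nabla u^-,\qquad
u\cdot u^- = -(u^-)^2,\qquad f\cdot u^-\geq 0.
\]
Substituting $\varphi=u^-$ in the extended weak formulation yields
\[
-\frac{1}{a}\int_{\Omega} A\nabla u^-\cdot\nabla u^-\,dx - \int_{\d\Omega}(u^-)^2\,d\sigma = \int_{\d\Omega} f\, u^-\,d\sigma \;\geq\; 0.
\]
By uniform ellipticity of $A$, the first integral on the left is $\geq \lambda\int_{\Omega}|\nabla u^-|^2$, so both terms on the left-hand side are non-positive, and the chain of inequalities forces
\[
\int_{\Omega}|\nabla u^-|^2\,dx = 0 \qquad\text{and}\qquad \int_{\d\Omega}(\Tr u^-)^2\,d\sigma=0.
\]

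Finally, the equivalence of norms \eqref{2a4} in Theorem~\ref{prop:equivalence of norms} gives $\|u^-\|_W\leq C\bigl(\|\nabla u^-\|_{L^2(\Omega)}^2+\|\Tr u^-\|_{L^2(\sigma)}^2\bigr)^{1/2}=0$, hence $u^-\equiv 0$ a.e.\ in $\Omega$, i.e.\ $u\geq 0$ a.e.  I expect no serious obstacle here: the only points requiring care are the density upgrade (which is direct from \eqref{2a6} and continuity of both sides) and the compatibility of the trace with the $t^-$ truncation, both of which are standard. Unlike the Dirichlet case, we do not need the trace of $u^-$ to vanish on a prescribed set; the positivity of the Robin boundary term does that job for us together with ellipticity.
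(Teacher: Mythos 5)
Your proposal is correct and follows essentially the same route as the paper: test against $u^-$ (justified by the density of $C_c^\infty(\R^n)$ in $W^{1,2}(\Omega)$), observe that ellipticity and the sign of the boundary term make the left-hand side nonpositive while $f\geq 0$ makes the right-hand side nonnegative, and conclude $\nabla u^-=0$ in $\Omega$ and $\Tr u^-=0$ on $\d\Omega$, hence $u^-\equiv 0$ by the norm equivalence \eqref{2a4}. The extra details you supply (chain rule for $u^-$, compatibility of the trace with the truncation) are exactly the standard facts the paper leaves implicit.
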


\begin{proof}
	Note that $u^- \in W^{1,2}(\Omega)$ (as it is the maximum of two Sobolev functions). Since $C_c^\infty(\mathbb R^n)$ is dense in $W^{1,2}(\Omega)$ (by the extension property
	of $\Omega$), we can conclude that 
	
	$$\frac{1}{a}\int_{\Omega} A\nabla u \nabla u^- + \int_{\partial \Omega} uu^- = \int_{\partial \Omega} fu^-.$$ Note, however, that the left hand side of the above equation is $\leq 0$ (by definition and ellipticity) whereas the right hand side is $\geq 0$ because $f\geq 0$. This is only possible if 
$\nabla u^-$ on $\Omega$ and $u^- = 0$ on $\d\Omega$, hence 
	$u^- \equiv 0$, which, of course, is the desired result. 
\end{proof}

An easy corollary of the above two lemmas is a strong maximum principle for continuous solutions:

\begin{corollary}\label{c:strongmax}
	Let $u \in W^{1,2}(\Omega)$ solve 
	$$\frac{1}{a}\int_{\Omega} A\nabla u \nabla \varphi + \int_{\partial \Omega} u\varphi = \int_{\partial \Omega} f\varphi, 
	\quad \forall \varphi \in C_c^\infty(\mathbb R^n),$$ 
	where $f\in C(\partial \Omega)$. Then for every $x\in \Omega$, $u(x)\leq \sup_{Q\in \partial \Omega} f(Q)$.
\end{corollary}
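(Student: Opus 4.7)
The plan is to compare $u$ against the constant solution with value $M := \sup_{Q\in\partial\Omega} f(Q)$ and invoke the weak maximum principle of Lemma~\ref{l:maxprinc}. Since $f \in C(\partial\Omega)$ and $\partial\Omega$ is compact, $M < \infty$, and by Lemma~\ref{l:continuoussolution} the function $u$ has a representative in $C(\overline{\Omega})$, so pointwise values at $x\in\Omega$ are unambiguous.

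First I would observe that the constant function $w\equiv M$ lies in $W^{1,2}(\Omega)$ (because $\Omega$ is bounded) and is a weak solution of the Robin problem with boundary data $M$: indeed, $\nabla w \equiv 0$ so
\[
\frac{1}{a}\int_\Omega A\nabla w \nabla \varphi + \int_{\partial\Omega} w\varphi\, d\sigma = 0 + \int_{\partial\Omega} M\varphi\, d\sigma \qquad \text{for all } \varphi \in C_c^\infty(\mathbb R^n).
\]
Subtracting this identity from the weak equation satisfied by $u$, the function $v := M - u \in W^{1,2}(\Omega)$ satisfies
\[
\frac{1}{a}\int_\Omega A\nabla v \nabla \varphi + \int_{\partial\Omega} v\varphi\, d\sigma = \int_{\partial\Omega} (M-f)\varphi\, d\sigma \qquad \text{for all } \varphi \in C_c^\infty(\mathbb R^n).
\]
The datum $M-f$ is non-negative by the definition of $M$, and continuous on the compact set $\partial\Omega$, hence bounded and therefore in $L^2(\partial\Omega,\sigma)$ (recall $\sigma(\partial\Omega)<\infty$).

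I would then apply Lemma~\ref{l:maxprinc} directly to $v$ with non-negative data $M-f$, obtaining $v \geq 0$ almost everywhere in $\Omega$. Since $u$ (and hence $v$) is continuous on $\overline\Omega$ by Lemma~\ref{l:continuoussolution}, the inequality $v\geq 0$ upgrades to the pointwise statement $u(x) \leq M$ for every $x\in\Omega$, as desired.

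This argument is essentially routine; there is no real obstacle, only two small points to check carefully. The first is that constants genuinely belong to $W^{1,2}(\Omega)$ (which requires only boundedness of $\Omega$), and the second is that the test-function space in \eqref{weakform} is closed under the operation of subtracting the constant-data equation from the $u$-equation, which is immediate since both identities hold against the same class $\varphi\in C_c^\infty(\mathbb R^n)$. The lower bound $u(x) \geq \inf_{Q\in\partial\Omega} f(Q)$, if desired, follows by the symmetric argument applied to $u - \inf f$.
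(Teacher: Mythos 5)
Your proof is correct and follows essentially the same route as the paper: the paper normalizes $\sup_{\partial\Omega} f$ to $1$ and applies Lemma~\ref{l:maxprinc} to $1-u$, which is exactly your comparison of $u$ with the constant solution $M$, followed by the same use of Lemma~\ref{l:continuoussolution} to upgrade the almost-everywhere inequality to a pointwise one.
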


\begin{proof}
	We can assume by scaling that $\sup_{Q\in \partial \Omega} f = 1$. Applying Lemma \ref{l:maxprinc} to $1-u$ we see that $1-u \geq 0$ almost everywhere in $\Omega$. But by Lemma \ref{l:continuoussolution}, $1-u\in C(\overline{\Omega})$, so $1-u \geq 0$ in all of $\Omega$.
\end{proof}

From here, we can show that the Robin elliptic measure exists: 

\begin{theorem}\label{t:hmexistence}
Let $(\Omega, \sigma)$ be a 1-sided NTA pair of mixed dimension, with $\Omega$ bounded.
There exists a family of Radon probability measures $\{\omega_{\Omega, R}^x\}_{x\in \Omega}$ supported on $\partial \Omega$, such that for every $f\in C(\partial \Omega)$ the unique weak solution of the Robin problem in $\Omega$ with data $f$, call it $u_f$, is given by 
$$
u_f(x) = \int_{\partial \Omega} f(Q)\, d\omega_{\Omega,R}^x(Q).
$$
\end{theorem}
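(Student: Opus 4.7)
\medskip

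My plan is to apply the Riesz representation theorem to a linear functional on $C(\partial\Omega)$ built from evaluating the continuous solution at the point $x$. Fix $x \in \Omega$ and define $T_x : C(\partial\Omega) \to \mathbb{R}$ by $T_x(f) = u_f(x)$, where $u_f$ is the solution produced by Lemma \ref{l:continuoussolution}. This is well-defined because Theorem \ref{th:existencerobin} gives uniqueness of weak solutions in $W^{1,2}(\Omega)$ for data in $L^2(\partial\Omega) \supset C(\partial\Omega)$ (recall $\sigma(\partial\Omega) < \infty$ since $\Omega$ is bounded and $\sigma$ is doubling). Linearity of $T_x$ is immediate from uniqueness and the linearity of the bilinear form in \eqref{weakform}: if $u_f$ and $u_g$ solve \eqref{weakform} with data $f$ and $g$, then $\alpha u_f + \beta u_g$ solves it with data $\alpha f + \beta g$, hence equals $u_{\alpha f + \beta g}$ by uniqueness.

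Next I would verify that $T_x$ is positive and bounded. If $f \geq 0$ on $\partial\Omega$, then $u_f(x) \geq 0$ by the weak maximum principle Lemma \ref{l:maxprinc} (applied to $u_f$, which is continuous up to the boundary by Lemma \ref{l:continuoussolution}, so pointwise nonnegativity actually holds). Similarly, Corollary \ref{c:strongmax} applied to $u_f$ and to $-u_f = u_{-f}$ yields $|T_x(f)| = |u_f(x)| \leq \sup_{\partial\Omega} |f|$, so $T_x$ is a bounded positive linear functional on $C(\partial\Omega)$ with norm at most $1$. The Riesz representation theorem for positive linear functionals on $C(\partial\Omega)$ (with $\partial\Omega$ compact since $\Omega$ is bounded) then yields a unique Radon measure $\omega_{\Omega,R}^x$ supported on $\partial\Omega$ with
\[
u_f(x) = T_x(f) = \int_{\partial\Omega} f(Q)\, d\omega_{\Omega,R}^x(Q) \qquad \text{for all } f \in C(\partial\Omega).
\]

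To see that $\omega_{\Omega,R}^x$ is a probability measure, test with $f \equiv 1$: the constant function $u \equiv 1$ lies in $W^{1,2}(\Omega)$ and satisfies \eqref{weakform} with data $1$ since $\nabla 1 = 0$ and the boundary integrals on both sides agree. By uniqueness, $u_1 \equiv 1$, so $\omega_{\Omega,R}^x(\partial\Omega) = T_x(1) = 1$. The family $\{\omega_{\Omega,R}^x\}_{x \in \Omega}$ then has all the claimed properties.

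I do not anticipate any substantive obstacle: the heavy lifting is done by Theorem \ref{th:existencerobin}, Lemma \ref{l:continuoussolution}, Lemma \ref{l:maxprinc}, and Corollary \ref{c:strongmax}. The only point that deserves a moment of care is the verification that $u \equiv 1$ really is the weak solution with data $f \equiv 1$ under our weak formulation, which is a direct check against \eqref{weakform}.
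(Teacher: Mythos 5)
Your proposal is correct and follows essentially the same route as the paper: evaluate the solution map at $x$, establish well-definedness, linearity, positivity and norm-one boundedness via the maximum principle and Corollary \ref{c:strongmax}, and invoke the Riesz representation theorem on $C(\partial\Omega)$. Your explicit check that $u_1\equiv 1$ is the weak solution with data $f\equiv 1$, giving total mass one, is a small detail the paper leaves implicit, and it is verified correctly.
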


\begin{proof}

First observe that for each $f \in C(\partial \Omega)$, there is a unique weak solution $u_f$ for the data $f$. Indeed, if there were two solutions, say $u_f$ and $v_f$, then their difference would weakly solve the homogeneous Robin problem and would be continuous up to the boundary. By Lemma \ref{l:maxprinc}, this implies that $u_f = v_f$.
The linearity of the map $f \mapsto u_f$ follows immediately from the weak formulation of the problem, and its boundedness follows from Corollary \ref{c:strongmax}.

	For any $x\in \Omega$, define the map $f\mapsto u_f(x)$. Since this map is well defined, linear and bounded with norm one from $C(\partial \Omega)$ to $\mathbb R$,
the Riesz Representation theorem gives the desired result. 
	
\end{proof}

We now are ready to show the main result of this section:

\begin{theorem}\label{t:varequiv}
Let $(\Omega, \sigma)$ be as in Theorem \ref{t:hmexistence}. 
	For any Borel measurable $E \subset \partial \Omega$ 
	we have that $u(x):= \omega^x_R(E)$ solves \begin{equation}\label{e:weakhm}
	\frac{1}{a}\int_{\Omega} A\nabla u \nabla \varphi + \int_{\partial \Omega} u\varphi = \int_{E}\varphi,\qquad  \forall \varphi \in C_c^\infty(\mathbb R^n).
	\end{equation}
\end{theorem}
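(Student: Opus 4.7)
The plan is a monotone class argument bootstrapping from the representation for continuous data (Theorem~\ref{t:hmexistence}) and the Lipschitz dependence of solutions on data from Theorem~\ref{th:existencerobin}. For each bounded Borel function $g$ on $\partial\Omega$, set $v_g(x) := \int_{\partial \Omega} g \, d\omega^x_R$ (which is a Borel function of $x\in\Omega$, as it is even continuous when $g$ is continuous and Borel measurability is preserved under pointwise limits) and define
\[
\mathcal{H} := \Big\{ g \in L^\infty(\partial\Omega,\mathrm{Borel}) \,:\, v_g \in W^{1,2}(\Omega) \text{ and } v_g \text{ solves \eqref{e:weakhm} with data } g \Big\}.
\]
$\mathcal{H}$ is clearly a vector space, and $\mathcal{H} \supset C(\partial\Omega)$ by Theorem~\ref{t:hmexistence}. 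Since the desired statement is $\mathbf 1_E \in \mathcal H$ for every Borel $E$, it is enough to prove that $\mathcal H$ contains every bounded Borel function.

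The crucial step is closedness of $\mathcal H$ under bounded monotone limits. Given $g_n \uparrow g$ pointwise on $\partial\Omega$ with $|g_n|\leq M$ and each $g_n \in \mathcal H$, the monotone convergence theorem applied fiberwise to the probability measures $\omega^x_R$ yields $v_{g_n}(x) \uparrow v_g(x)$ for every $x \in \Omega$. On the other hand, dominated convergence gives $g_n \to g$ in $L^2(\partial\Omega,\sigma)$, and applying Theorem~\ref{th:existencerobin} to the differences $v_{g_n}-v_{g_m}$ (which solve \eqref{e:weakhm} with data $g_n - g_m$ by linearity) produces
\[
\|v_{g_n}-v_{g_m}\|_{W^{1,2}(\Omega)} \leq C\|g_n-g_m\|_{L^2(\sigma)} \to 0,
\]
so $\{v_{g_n}\}$ is Cauchy in $W^{1,2}(\Omega)$ with some limit $u$ that still satisfies \eqref{e:weakhm} with data $g$ (pass to the limit in each term using weak convergence of $\nabla v_{g_n}$ and strong $L^2(\sigma)$ convergence of traces). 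Bounded pointwise convergence upgrades to $L^2(\Omega)$ convergence $v_{g_n}\to v_g$, which combined with $v_{g_n}\to u$ in $L^2(\Omega)$ forces $u = v_g$ almost everywhere; hence $g \in \mathcal H$.

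To conclude, I would first verify $\mathbf 1_U \in \mathcal H$ for every open $U \subset \partial\Omega$: since $\partial\Omega$ is a compact metric space, Urysohn's lemma produces $f_n \in C(\partial\Omega)$ with $0 \leq f_n \uparrow \mathbf 1_U$, and the closedness step applies. Since the open sets form a $\pi$-system generating the Borel $\sigma$-algebra on $\partial\Omega$, the functional monotone class theorem (equivalently, Dynkin's $\pi$--$\lambda$ theorem applied to $\{E: \mathbf 1_E \in \mathcal H\}$, which is a $\lambda$-system by linearity and the closedness step) gives $\mathcal H \supset L^\infty(\partial\Omega,\mathrm{Borel})$, completing the proof. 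The main technical obstacle is the closedness step, where one must simultaneously identify the $W^{1,2}(\Omega)$-limit (produced by the continuous $L^2(\sigma)\to W^{1,2}(\Omega)$ dependence of Theorem~\ref{th:existencerobin}) with the pointwise limit $v_g$ (produced by the fiberwise MCT); neither ingredient alone would suffice to conclude that $v_g$ itself is the weak solution.
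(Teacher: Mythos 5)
Your proof is correct, but it takes a genuinely different route from the paper. The paper fixes a Borel set $E$ and a pole $x_0$, uses inner/outer regularity of the two Radon measures $\omega^{x_0}_R$ and $\sigma$ to sandwich $E$ between a compact $K_i$ and an open $O_i$ with both measures of $O_i\setminus K_i$ small, takes Urysohn data $f_i$, and then combines the Harnack inequality (to get locally uniform convergence of $u_i$ to a continuous solution $u_\infty$ and to transfer the smallness of $\omega^{x_0}_R(O_i\setminus K_i)$ to every other pole, so that $u_\infty(x)=\omega^x_R(E)$ at \emph{every} $x\in\Omega$) with a uniform $W^{1,2}$ bound and weak compactness to pass to the limit in the weak formulation. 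You instead run a functional monotone class/Dynkin argument: the base class $C(\partial\Omega)$ is handled by Theorem~\ref{t:hmexistence}, and the closedness under bounded monotone limits is obtained by pairing fiberwise monotone convergence of $x\mapsto\int g_n\,d\omega^x_R$ with the strong $L^2(\sigma)\to W^{1,2}(\Omega)$ stability coming from uniqueness plus the a priori bound in Theorem~\ref{th:existencerobin}; the two limits are then identified in $L^2(\Omega)$. This avoids both the regularity-of-measures bookkeeping and any use of Harnack, is purely functional-analytic, and gives the result for all bounded Borel data at once, not just indicators. What it buys less of: your identification of $\omega^{\cdot}_R(E)$ with the $W^{1,2}$ solution is only almost everywhere in $x$, which suffices for the statement as written (membership in $W^{1,2}$, the trace term, and \eqref{e:weakhm} are insensitive to a.e. modification), but the paper's argument yields the pointwise-everywhere identification with the continuous-in-$\Omega$ representative, which is what is silently used in the next section when \eqref{e:repformulae} is asserted ``for every $y\in\Omega$''; with your approach one would add a short remark (e.g., via Theorem~\ref{th:robinharnack}/interior continuity of the solution and continuity from above along shrinking open neighborhoods, or simply the paper's regularity argument at a single pole) to upgrade a.e. to everywhere before quoting the representation formula pointwise.
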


\begin{proof}
	In view of Theorem~\ref{t:hmexistence}, we only need an approximation argument to pass from the characteristic function to continuous data. We follow the approach of \cite{kenigbook}, which proves a similar result for the 
harmonic measure. From the Riesz representation theorem we know that for each $x\in \Omega$ 
we have that $\omega^x_R$ is a Radon measure on $\partial \Omega$. 
In particular, fixing $x_0\in \Omega$, 
for each $i \in \mathbb N^\ast$ 
there exists $K_i \cap \partial \Omega \subset E \subset O_i \cap \partial \Omega$ 
where $K_i$ is compact and $O_i$ is open so that $\omega^{x_0}_R(O_i\backslash K_i) 
< \frac{1}{i}$. Note that since $\sigma$ is also a Radon measure,  
by perhaps shrinking $O_i$ or expanding $K_i$ we can also guarantee that 
$\sigma(O_i\backslash K_i) < \frac{1}{i}$.
	
	Let $f_i \in C(\partial \Omega)$ be such that $0 \leq f_i \leq 1$ and $f_i \equiv 1$ on $K_i$ and $\mathrm{spt} f_i \subset O_i$. We then define 
	$$u_i(x) = \int_{\partial \Omega} f_i(Q)\, d\omega_R^x(Q).$$ 
	We know from Theorem \ref{t:RobinOsc} and Lemma \ref{l:maxprinc} that 
$u_i \in C(\overline{\Omega})$ and $\sup |u_i| \leq 1$. 
By definition, the sequence $\{u_i(x_0)\}_i$ converges for every $x_0 \in \Omega$, 
and thus by the Harnack inequality 
 the $u_i$ converge uniformly on compact subsets of $\Omega$ to a limit $u_\infty$, 
 which solves $-\mathrm{div}(A \nabla u_\infty) = 0$ in $\Omega$ (because the $u_i$ do).
	
	We observe that the Harnack inequality tells us that for every $x\in \Omega$ 
and every $\varepsilon > 0$ there exists an $i_0 \geq 1$ large enough 
(depending on $x, x_0, \varepsilon$) 
such that $i \geq i_0$ implies that $\omega_R^x(O_i\backslash K_i) < \varepsilon$. 
Since $|u_i(x) - \omega^x_R(E)| \leq \omega_R^x(O_i\backslash K_i)$, 
we get that $u_\infty(x) = \omega_R^x(E)$ for every $x\in \Omega$.

	By definitions, each $u_i$ weakly solves 
	$$\frac{1}{a}\int_{\Omega} A\nabla u_i\nabla \varphi +\int_{\partial \Omega} u_i\varphi 
	= \int_{\partial \Omega} f_i\varphi,\quad \forall \varphi\in C_c^\infty(\mathbb R^n).$$ 
	By Theorem \ref{th:existencerobin} 
	and the construction of the $f$ we have that 
	$$
	\|u_i\|_{W^{1,2}(\Omega)} \leq C\|f_i\|_{L^2(\partial \Omega)} 
	\leq  C \sigma(E). 
	$$ 
	So $u_i$ has a weak accumulation point in $W^{1,2}$ and by the argument above we have $u_i \rightharpoonup u_\infty$ in $W^{1,2}(\Omega)$. Since the trace is continuous we also have $u_i\rightharpoonup u_\infty$ in $L^2(\partial \Omega)$. Thus taking limits above we get that $u_\infty$ weakly solves \eqref{e:weakhm} and we are done. 
		\end{proof}

We end this section by constructing the Robin Green function and showing a relationship between that Green function and the Robin harmonic measure.

Let $f,g\in W^{1,2}(\Omega)$. We define the natural  quadratic form associated to the Robin problem
\begin{equation}
    b(f,g):= \int_{\Omega}A\nabla f\nabla g+a\int_{\partial \Omega}fg d\sigma. 
\end{equation}
The latter is symmetric, if $A$ is. 

\begin{theorem}[Existence of the Robin Green function] 
\label{t:GreenfunctionExistence}
Let $(\Omega, \sigma)$ be a 1-sided NTA pair of mixed dimension, with $\Omega$ bounded.
Then for every $y\in \Omega$ there exists a non-negative function $G_R(\cdot,y)$ such that 
	\begin{equation*}
		G_R(\cdot,y)\in W^{1,2}(\Omega\setminus B(y,r))\cap W^{1,1}(\Omega)
		\quad \text{for any $r > 0$,}
	\end{equation*}
	and such that for all $\phi\in C^{\infty}(\Omega)$
	\begin{equation}\label{e:Greenidentity}
		b(G_R(\cdot,y),\phi)= \phi(y).
	\end{equation}
	Moreover, if $u$ is a weak solution to the Robin problem with data $f\in L^2(\partial \Omega)$ (i.e. it satisfies \eqref{weakform}), then 
	\begin{equation}\label{e:representationformula}
		u(x)= 
		a\int_{\partial \Omega}f(y)G_R(x,y)d\sigma(y)  
		\,\,\text{for every } x\in \Omega. 
	\end{equation}
	\end{theorem}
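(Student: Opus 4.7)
The plan is to construct $G_R(\cdot,y)$ by the classical Stampacchia approximation and then derive the representation formula via an adjoint Green function. For $y \in \Omega$ fixed and $r < \delta(y)/2$, define $G_r(\cdot,y) \in W^{1,2}(\Omega)$ as the unique solution of $b(G_r(\cdot,y), \phi) = \fint_{B(y,r)} \phi\, dx$ for all $\phi \in W^{1,2}(\Omega)$; existence and uniqueness follow from Lax–Milgram, since $b$ is coercive on $W = W^{1,2}(\Omega)$ by Theorem \ref{prop:equivalence of norms} and the right-hand side is a continuous linear functional on $W$ (via the embedding $W \hookrightarrow L^2$). Nonnegativity of $G_r(\cdot,y)$ follows by testing against $(G_r)^-$ and using ellipticity together with the nonnegativity of the right-hand side (so that the argument of Lemma \ref{l:maxprinc} applies to $b$).

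Next I would establish three uniform estimates on $\{G_r(\cdot,y)\}_{r < \delta(y)/4}$: a Caccioppoli-type bound giving a uniform $W^{1,2}(\Omega \setminus B(y,\rho))$ bound for each fixed $\rho$; an interior pointwise bound $G_r(x,y) \leq C|x-y|^{2-n}$ for $|x-y|$ small and $n \geq 3$ (from Moser's estimate applied to the $A$-harmonic function $G_r(\cdot,y)$ on $\Omega \setminus \overline{B(y,r)}$, combined with an energy estimate near the pole); and a uniform $W^{1,1}(\Omega)$ bound by the Stampacchia–Grüter–Widman truncation trick (test $b$ against $\Psi_k(G_r)$ for suitable truncations $\Psi_k$, obtain a decay estimate on the level sets $\{G_r > t\}$, and deduce $\|\nabla G_r\|_{L^1(\Omega)} \leq C$). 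Passing to a subsequential limit as $r \to 0$ (weak in $W^{1,2}$ on each annulus around $y$, weak-$\ast$ in $W^{1,1}$ on $\Omega$), and using that $\fint_{B(y,r)}\phi \to \phi(y)$ for $\phi$ continuous at $y$, yields a nonnegative $G_R(\cdot,y)$ with the claimed regularity and $b(G_R(\cdot,y), \phi) = \phi(y)$ for all $\phi \in C^\infty(\overline{\Omega})$.

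For the representation formula \eqref{e:representationformula}, one cannot simply plug $G_R(\cdot,y)$ into the weak formulation because $A$ need not be symmetric. I would therefore repeat the whole construction for the adjoint matrix $A^T$, obtaining, for each fixed $x \in \Omega$, a nonnegative adjoint Robin Green function $G_R^\ast(\cdot,x)$ satisfying $b^\ast(G_R^\ast(\cdot,x), \phi) = \phi(x)$, where $b^\ast(u,v) = b(v,u)$. Using a cutoff $\chi_\eta$ which vanishes near $x$, the approximants $\chi_\eta G_R^\ast(\cdot,x)$ are admissible test functions in the weak Robin formulation \eqref{weakform} for $u$ (after rescaling by $a$ to match $b$), and the uniform estimates above let us pass to the limit $\eta \to 0$: the contribution near the pole vanishes thanks to $G_R^\ast(\cdot,x) \in W^{1,1}$ and the $|z-x|^{2-n}$ bound, while away from $x$ the estimate is standard. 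This gives $u(x) = b(u, G_R^\ast(\cdot,x)) = a \int_{\partial\Omega} f(y)\, G_R^\ast(y,x)\, d\sigma(y)$. Finally, the symmetry $G_R(x,y) = G_R^\ast(y,x)$ is obtained by taking $\phi = G_{r'}^\ast(\cdot,x)$ in the defining equation for $G_r(\cdot,y)$ and $\phi = G_r(\cdot,y)$ in the defining equation for $G_{r'}^\ast(\cdot,x)$, yielding $\fint_{B(y,r)} G_{r'}^\ast(\cdot,x) = \fint_{B(x,r')} G_r(\cdot,y)$, and then letting $r, r' \to 0$ using continuity of the approximants on compacts away from the poles.

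\textbf{Main obstacle.} The delicate step is the uniform $W^{1,1}$ estimate and the associated pointwise size control used to justify $\chi_\eta G_R^\ast(\cdot,x)$ as a test function. The boundary term $a\int_{\partial\Omega} G_r \phi\, d\sigma$ in $b$ contributes when one tests against truncations $\Psi_k(G_r)$, and the usual $\mathbb{R}^n$-argument must be adapted using the trace/Sobolev–Poincaré machinery of Section \ref{s:TEP} to ensure that the level-set estimates close up in our mixed-dimension setting; once this is in place, all remaining limits are routine.
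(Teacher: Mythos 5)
Your construction of $G_R(\cdot,y)$ is essentially the paper's: Lax--Milgram/Stampacchia for the accretive form $b$ (via Theorem \ref{prop:equivalence of norms}), approximate poles $\fint_{B(y,r)}\phi$, nonnegativity by the argument of Lemma \ref{l:maxprinc}, uniform $W^{1,2}$ bounds away from the pole, a uniform bound across the pole (you via Gr\"uter--Widman truncation, the paper via viewing $f_\rho$ as uniformly bounded in $(W^{1,s'})^*$ with $s'>n$), and a limit. Where you genuinely diverge is the representation formula: you introduce the adjoint Green function and a duality/symmetry identity, while the paper regularizes the coefficients ($A_\epsilon$ smooth, so that $u^\epsilon_f$ is locally Lipschitz near the pole), proves \eqref{e:representationformula} for $A_\epsilon$, and passes to the limit using $G_R^\epsilon\rightharpoonup G_R$ in $L^2(\partial\Omega)$ and $u^\epsilon_f\to u_f$ locally uniformly. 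Your route is viable in principle, but as written it has a gap at exactly the point the paper flags as the reason for regularizing. Testing \eqref{weakform} with $\chi_\eta G_R^\ast(\cdot,x)$ and letting $\eta\to0$, you must control $\int_{B(x,C\eta)}|\nabla u|\,|\nabla G_R^\ast(\cdot,x)|$ (this is also what is needed to extend the adjoint identity $b^\ast(G_R^\ast(\cdot,x),\phi)=\phi(x)$ from smooth $\phi$ to $\phi=u$, i.e.\ to make $b(u,G_R^\ast(\cdot,x))$ meaningful) together with the cutoff term $\eta^{-1}\int_{B(x,C\eta)}G_R^\ast(\cdot,x)|\nabla u|$. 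The reasons you give --- $G_R^\ast(\cdot,x)\in W^{1,1}$ and the bound $|z-x|^{2-n}$ --- do not control either term: for merely bounded measurable $A$, $\nabla u$ is only in $L^2_{loc}$ (at best $L^{2+\epsilon}$ by Meyers), while $\nabla G_R^\ast$ near the pole lies only in $L^s$ with $s<\frac{n}{n-1}$, whose dual exponent exceeds $n$; no H\"older pairing closes, and it is not even clear the integrals are finite. This is precisely the ``lack of regularity makes it hard to justify certain limits'' issue that drives the paper's proof.

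The gap can be closed without smoothing $A$, but an ingredient must be added. One option is to quantify the pole contribution by interior De Giorgi--Nash--Moser regularity of $u$ plus Caccioppoli on dyadic annuli $A_j=B(x,2^{-j})\setminus B(x,2^{-j-1})$: with $r_j=2^{-j}$ one gets $\|\nabla G_R^\ast\|_{L^2(A_j)}\lesssim r_j^{1-n/2}$ and $\|\nabla u\|_{L^2(A_j)}\lesssim r_j^{n/2-1+\alpha}$, so $\int_{A_j}|\nabla u||\nabla G_R^\ast|\lesssim r_j^{\alpha}$, which sums and shows the pole terms are $O(\eta^\alpha)$. A cleaner option bypasses the pole entirely: the approximants $G^\ast_{r'}(\cdot,x)$ lie in $W^{1,2}(\Omega)$, hence are admissible test functions in \eqref{weakform} (by density), giving $b(u,G^\ast_{r'}(\cdot,x))=a\int_{\partial\Omega}f\,G^\ast_{r'}(\cdot,x)\,d\sigma$, while the defining property of $G^\ast_{r'}$ gives $b(u,G^\ast_{r'}(\cdot,x))=b^\ast(G^\ast_{r'}(\cdot,x),u)=\fint_{B(x,r')}u$; letting $r'\to0$ (interior continuity of $u$, and weak $L^2(\sigma)$ convergence of the traces, obtained after multiplying by a cutoff vanishing near $x$) yields $u(x)=a\int_{\partial\Omega}f(y)\,G_R^\ast(y,x)\,d\sigma(y)$ with no singular integral to estimate. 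Finally, mind the bookkeeping for non-symmetric $A$: what this produces is the formula with the trace of the adjoint Green function; your identity $G_R(x,y)=G_R^\ast(y,x)$ is proved for interior $x\neq y$, so rewriting the boundary integral in terms of $G_R$ requires a short limiting argument as one variable tends to $\partial\Omega$ (or simply keep the formula in the form the paper itself derives, with the non-pole variable of the Green function integrated over $\partial\Omega$).
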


\ms
The representation formula \eqref{e:representationformula} also works when $f \in H^\ast$,
the dual of $H$ (see Theorem~\ref{th:existencerobin} and \eqref{2a8}), with the same proof.

More importantly, notice that \eqref{e:Greenidentity}, applied with $\phi$ supported away from $y$, 
says that $G_R(\cdot, y)$ is, locally on $\Omega \sm \{ y\}$ a weak solution of our equation
${\rm{div}} A \nabla u = 0$, with homogeneous Robin conditions
(compare to our equation \eqref{weakformloc} with $f=0$. 
In particular, away from $y$, it is locally H\"older continuous up to the boundary.

\begin{proof}
	We show the existence of this object by abstract functional analytic methods, see \cite[Chapter 3]{Davies} and \cite{AE95}.

Since $b$ is accretive in $W^{1,2}(\Omega)$ (by, e.g. Proposition \ref{prop:equivalence of norms}), 
we can apply the Stampacchia theorem and get that for any $f\in L^2(\Omega)$ 
there exists a unique  function $w_f \in W^{1,2}(\Omega)$ such that 
\begin{equation} \label{5d8}
b(w_f, \varphi) = \int_{\Omega} \varphi f \ \text{for all } \varphi \in C^\infty_c(\Omega)
\end{equation}
and $\|w_f\|_{W^{1,2}(\Omega)} \leq C\|f\|_{L^2(\Omega)}$. 
We call $\mathcal S : L^2(\Omega) \to W^{1,2}(\Omega)$ the solution operator; 
thus $w_f = \mathcal S f$. We test \eqref{5d8} with $\varphi \in C_c^\infty(\Omega)$, 
the boundary integral disappears and get that $-\mathrm{div}(A(x)\nabla w_f) = f$ (weakly) in $\Omega$. 

We follow \cite{GW}, let $y \in \Omega$ and $\rho > 0$ and define $f_\rho$ to be a smooth approximation of $\frac{1}{|B(y,\rho)|}\chi_{B(y,\rho)}$. Define $G_R^\rho(-, y)$ to be the solution (given in the paragraph above) with right hand side $f_\rho$. By elliptic regularity, $G_R^\rho(-, y)\in C(\Omega)$ and by the weak maximum principle $G_R^\rho(x, y) \geq 0$ for all $x\in \Omega$. Also note (by testing against $\varphi$ with support away from $y$) that $G_R^\rho(-,y) \in W^{1,2}_{loc}(\Omega\backslash \{y\})$ with a norm independent of $\rho$. 

Let $s\in (1, \frac{n}{n-1})$ and let $s'$ be the Sobolev conjugate. By Sobolev embedding, if $B_{4\epsilon}(y)\subset \Omega$ and $\phi \in W^{1,s'}(B_{4\epsilon}(y))$ then $\phi \in L^\infty(B_{4\epsilon}(y))$. Thus $f_\rho \in (W^{1,s'}(B_{4\epsilon}(y)))^*$ with a uniform bound on its operator norm. Calder\'on-Zygmund estimates then imply that $G_R^\rho(-, y) \in W^{1,s}(B_{2\epsilon}(y))$ again with a uniform bound on the operator norm. Taking limits (and using the increased regularity of $G_R^\rho$ away from $y$) we have that $G_R(x,y) = \lim_{\rho \downarrow 0} G_R(-,y)$ exists in $W^{1,1}(\Omega)$ and \eqref{e:Greenidentity} follows (since $\lim_{\rho \downarrow 0} \fint_{B(y,\rho)} \phi = \phi(y)$).

	We are left with showing \eqref{e:representationformula}; this would not be so difficult except for the fact that solutions to differential equations with merely bounded elliptic coefficients are not necessarily Lipschitz, and this lack of regularity makes it hard to justify certain limits. Thus we regularize. Let $A_\epsilon \in C^1(\Omega)$ be uniformly elliptic real matrix valued functions such that $A_\epsilon \rightarrow A$ in $L^2(\Omega)$ (i.e. each entry converges in $L^2(\Omega)$). We can choose them so that $\lambda I \leq A_\epsilon \leq \Lambda I$ for all $\epsilon > 0$. Let $G_R^\epsilon$ be the Green's function above associated to $A_\epsilon$ and let $u^\epsilon_f\in W^{1,2}(\Omega)$ (weakly) solve $-\mathrm{div}(A_\epsilon\nabla u) = 0$ in $\Omega$ with Robin data $f$ (i.e. $u^\epsilon_f$ satisfies \eqref{weakform} with the matrix $A_\epsilon$). Our first goal is to show that (up to a subsequence) $u^\epsilon_f\rightarrow u_f$ uniformly on compact subsets of $\Omega$ and that for each $x\in \Omega$, $G_R^\epsilon(x, -) \rightharpoonup G_R(x,-)$ in $L^2(\partial \Omega)$ as $\epsilon \downarrow 0$.

First note that $u^\epsilon_f$ are bounded in $W^{1,2}(\Omega)$ and thus have a weakly convergent subsequence. Furthermore for any $\varphi \in C^\infty(\Omega)$ we have that 
$$
\left|\frac{1}{a}\int_{\Omega} A \nabla u^\epsilon_f \nabla \varphi +\int_{\partial \Omega} (u^\epsilon_f-f) \varphi\right| \leq \frac{1}{a} \|A-A_\epsilon\|_{L^2} \|\nabla \varphi\|_{L^\infty}\|\nabla u^\epsilon\|_{L^2} \rightarrow 0.
$$ 
By uniqueness of weak solutions we have that $u^\epsilon_f$ converges weakly to $u_f$. But by De Giorgi-Nash-Moser the $u^\epsilon_f$ are bounded in $C^{2\alpha}(K)$ for each $K \subset \subset \Omega$ and some $2\alpha < 1$.  It follows that $u^\epsilon_f \rightarrow u_f$ in $C^\alpha_{loc}(\Omega)$. To show that the Green function converges in the appropriate way we note again that $G_R^\epsilon$ are uniformly bounded in $W^{1,2}(\Omega\backslash B(y,r))$ for any fixed $r > 0$. This implies that 
they 
converge weakly in $W^{1,2}(\Omega\backslash B(y,r))$.  Furthermore, if $g \in C_c^\infty(\Omega\backslash \{y\})$ and $w^\epsilon_g$ solves $-\mathrm{div}(A_\epsilon \nabla w^\epsilon_g) = g$ weakly with zero Robin boundary data, then the argument above shows that $w^\epsilon_g(y) \rightarrow w_g(y)$ and thus $$\int_\Omega G_R^\epsilon(x,y) g(y)\,dy \rightarrow \int_\Omega G_R(x,y)g(y)\, dy.$$ So indeed $G_R^\epsilon \rightharpoonup G_R$ in $W^{1,2}(\Omega\backslash B(y,r))$.

Let $\varphi \in C^\infty(\mathbb R^n\backslash \{y\})$ we have that $$\int_\Omega A_\epsilon \nabla G_R^\epsilon\nabla \varphi - \int_\Omega A\nabla G_R\nabla \varphi dx = a\int_{\partial \Omega} \varphi(G_R-G_R^\epsilon).$$ We can see that the integral on the left goes to zero by adding and subtracting $A_\epsilon\nabla G_R\nabla \varphi$ and using that $A_\epsilon \rightarrow A$ in $L^2$ and $G_R^\epsilon \rightharpoonup G_R$ in $W^{1,2}$. Since $\varphi|_{\partial \Omega}$ is essentially arbitrary, this gives the desired weak convergence. 
	
	As such it suffices to show \eqref{e:representationformula} when $A\in C^1(\Omega)$ as the general result follows because $G_R^\epsilon(x,y)\rightharpoonup G_R(x,y)$ in $L^2(\partial \Omega)$ and $u_f^\epsilon(y) \rightarrow u_f(y)$ uniformly on compact subsets of $\Omega$. To avoid the proliferation of epsilons we will drop them from now on. 	
	
Let $u_f \in W^{1,2}(\Omega)$ (weakly) solve $-\mathrm{div}(A\nabla u) = 0$ in 
$\Omega$ with Robin data $f$ (i.e. $u_f$, satisfies \eqref{weakform}). 
Recalling that $\Omega$ is an extension domain, $C^\infty(\Omega)$ is dense in $W^{1,2}(\Omega)$, so taking $C^\infty(\Omega) \ni \phi_i \rightarrow u_f\in W^{1,2}(\Omega)\cap W^{1, \infty}(B(y, \varepsilon) )$ (here we are using that $A$ is smooth and so $u_f$ is locally Lipschitz) in \eqref{e:Greenidentity} we have 
$$
u_f(y) = \int_{\Omega}A\nabla u_f \nabla G_R(\cdot, y)
+ a\int_{\partial \Omega}u_f  G_R(\cdot ,y) d\sigma.
$$
Write $G_R(\cdot, y) = \eta G_R(\cdot, y) 
+ (1-\eta)G_R(\cdot, y)$ where $\eta \in C_c^\infty(B(y, 2\theta))$, $\eta \equiv 1$ in $B(y, \theta)$ and $0 \leq \eta \leq 1$ always. Note we can ensure that $|\nabla \eta| \leq 3\theta^{-1}$ pointwise. 

By the density of $C^\infty(\Omega)$ in $W^{1,2}(\Omega)$ again, in \eqref{weakform} we can test against
 $\varphi := (1-\eta) G_R(\cdot,y) \in W^{1,2}(\Omega)$, to obtain
$$
\int_{\Omega}A\nabla u_f \nabla ((1-\eta) G_R(x, y))\, dx
=  - a \int_{\d\Omega} (u_f - f) (1-\eta) G_R(x, y) d\sigma(x).
$$
When $\theta > 0$ is small enough we have that $B(y, 2\theta) \subset \Omega$ and so 
$- a \int_{\d\Omega} (u_f - f) (1-\eta) G_R(x, y) d\sigma(x) = -a\int_{\partial \Omega} (u_f-f)G_R(x,y)d\sigma(x)$. Putting all of this together we have that $$u_f(y) = a\int_{\partial \Omega} f(x)G_R(x,y)\, d\sigma(x)+ \int_\Omega A\nabla u_f \nabla (\eta G_R(x,y))\, dx.$$

Thus for \eqref{e:representationformula} we just need to check that
$$\lim_{\theta \downarrow 0}  \int_\Omega A\nabla u_f \nabla (\eta G_R(x,y))\, dx 
= \lim_{\theta \downarrow 0} \Big\{ 
\int_\Omega G_R(x,y) A\nabla u_f \nabla \eta \,dx + \int_\Omega \eta A\nabla u_f \nabla G_R(x,y)\, dx
\Big\} 
= 0.
$$
Note that the second integral, $\int_\Omega \eta A\nabla u_f \nabla G_R(x,y)\, dx \rightarrow 0$ because $\eta\nabla G_R(x,y) \rightarrow 0$ in $L^1(dx)$ and $A\nabla u_f \in L^\infty$ (again we are using the extra smoothness assumption on $A$). 

For the first integral $$\int_\Omega G_R(x,y) A\nabla u_f \nabla \eta \,dx \leq C\theta^{-1} \|\nabla u_f\|_{L^\infty(B(y, 2\theta))} \int_{B(y, 2\theta)} G_R(x,y)\, dx.$$ 
By the 
Sobolev embedding we have that $G_R(x,y) \in L^{\frac{n}{n-1}}(B(y,2\theta))$ and H\"older gives us $$\int_\Omega G_R(x,y) A\nabla u_f \nabla \eta \,dx  \leq C\theta^{-1} \theta^{n/n} \|\nabla u_f\|_{L^\infty(B(y, 2\theta))}\|G_R\|_{L^{\frac{n}{n-1}}(B(y,2\theta))} \rightarrow 0,$$ as $\theta \downarrow 0$. This finishes the proof of the representation formula for smooth $A$ and thus for general elliptic $A$.

 \end{proof}
 
 We call the function $G_R(x,y)$ introduced in Theorem \ref{t:GreenfunctionExistence} the Robin Green function with pole at $y$. In the next Section we use the representation formula \eqref{e:representationformula} to prove some surprising properties about the mutual absolute continuity of the Robin harmonic measure and measure $\sigma$.

			\section{Absolute continuity of the 
			Robin Harmonic Measure}\label{s:abscont}

			The goal of this section is to prove the mutual absolute continuity of the 
Robin harmonic measure with 
surface measure (our main Theorems \ref{t:mainac}, \ref{t:ainfinitysmall}, \ref{t:ainfinitylarge}). We should mention that all the arguments in this section are inspired by Jill Pipher's suggestion that we use the representation formula to prove $\sigma(E)= 0\Rightarrow \omega^X_R(E) = 0$ in Theorem \ref{t:mainac}. 
			
				\begin{proof}[Proof of Theorem \ref{t:mainac}]
				Use $f$ equal to the characteristic function of the set $E$ in \eqref{e:representationformula} and the second statement in Theorem~\ref{t:varequiv} together with the uniqueness of solutions to write 
\begin{equation}\label{e:repformulae}\omega^y_R(E)= 
		a\int_{E}G_R(x,y)d\sigma(x)  
		\,\,\text{for every } y\in \Omega. 
 \end{equation}
Hence, if $\sigma(E)=0$ then automatically $\omega^y_R(E)=0$.

In the other direction if $\omega^y_R(E) = 0$ for some $y \in \Omega$, 
then by the non-negativity and continuity in $x$ 
of the Green function, 
either $G_R(x,y) = 0$ for all $x\in E$ or $\sigma(E) = 0$. If $G_R(x,y) = 0$ for any $x\in E$, then the Harnack inequality up to the boundary, Theorem \ref{th:robinharnack}, implies that $G_R(x,y) = 0$ for all $x\in \overline{\Omega}$ close enough to $E$. Repeated applications of the Harnack inequalities show that $G_R(\cdot,y) \equiv 0$ on $\Omega$. 
However, if we let $\phi \equiv 1$ in \eqref{e:Greenidentity} this gives us a contradiction. 
Thus $\sigma(E) = 0$ (and hence, by the first part $\omega^z_R(E) = 0$ for all $z \in \Omega$ as well). 
			\end{proof}

			Our quantitative refinements of Theorem \ref{t:mainac} also depend on \eqref{e:representationformula}. To simplify exposition, we separate out the following key estimate, which follows immediately from \eqref{e:repformulae} above, and where we set
$\Delta = \Delta(x_0, r) = B(x_0,r)\cap \partial \Omega$  for $x_0 \in \d\Omega$ and $r > 0$,
and $y$ is any point of $\Omega$:
\begin{equation}\label{e:consofrep}
			\frac{\inf_{x \in \Delta} G(x, y)}{\sup_{x\in \Delta} G(x, y)} \ \frac{\sigma(E)}{\sigma(B(x_0,r))} \leq \frac{\omega_R^y(E)}{\omega_R^y(B(x_0,r))} 
		\leq \frac{\sup_{x\in \Delta} G(x, y)}{\inf_{x\in \Delta} G(x, y)} \ \frac{\sigma(E)}{\sigma(B(x_0,r))}.
			\end{equation}

			We are now ready to prove Theorems \ref{t:ainfinitysmall} and \ref{t:ainfinitylarge}. 
			
	\begin{proof}[Proof of Theorem \ref{t:ainfinitysmall}]
Recall that we are given $x_0 \in \d\Omega$, $r > 0$ such that $ar^{2-n} \sigma(B(x_0,r)) \leq 1$,
$E\subset B(x_0,r)\cap \partial \Omega$, and $X \in \Omega$ such that 
$|X-x_0| \geq C r$. Thus $r \leq C^{-1} \diam(\Omega)$, which helps us apply the results above.
We want to show that \eqref{e:Ainfinitysmall} holds for $y = X$, and because of \eqref{e:consofrep} it is enough to check that 
$$
\sup_{x \in \Delta} G(x, X) \leq C \inf_{x \in \Delta} G(x, X),
$$
again with $\Delta= B(x_0,r)\cap \partial \Omega$. 
We will even check that
\begin{equation} \label{6a3}
\sup_{x \in \Omega \cap B(x,r_0)} G(x, X) \leq C \inf_{x \in \Omega \cap B(x, r_0)} G(x, X),
\end{equation}
which is more general (and easier to manipulate).

We know that $G(\cdot, X)$ is a solution on $B(x_0, Cr)$, with vanishing Robin data on the boundary. 
Theorem \ref{th:robinharnack} applies if $C$ is large enough 
(compared to $K^2$) and in addition $ar^{2-n} \sigma(B(x_0,r)) \leq c_0$ 
(the constant in that theorem); observe that \eqref{e:notgenuine2} always hold when $\beta=0$.
The theorem then yields \eqref{6a3}.

If  $c_0 \leq ar^{2-n} \sigma(B(x_0,r)) \leq 1$, we observe that for $r_0 = \lambda r$,
$\lambda$ small enough, the asymptotic condition \eqref{1n3}, applied to any $x \in \Delta$, yields
\begin{equation} \label{6a4}
\begin{split}
a r_0^{2-n} &\sigma(B(x,r_0)) \leq a  r_0^{2-n}c_d^{-1} \lambda^d \sigma(B(x,r)) 
= c_d^{-1} a\lambda^{d-2} r^{2-n} \sigma(B(x,r)) \\
&\leq c_d^{-1} a\lambda^{d-2} r^{2-n} \sigma(B(x_0,2r)) \leq C \lambda^{d-2} a r^{2-n} \sigma(B(x_0,r)) \leq C \lambda^{d-2} \leq c_0
\end{split}
\end{equation}
by the doubling property of $\sigma$, our assumption on $r$, 
and if we take $\lambda$ small enough.
That is, the various $B(x,r_0)$ satisfy the assumption above, \eqref{6a3} holds for them,
and we can deduce \eqref{6a3} for $B(x_0, r)$ from Harnack's inequality (we merely loose a constant depending on $c_0$).
\end{proof}

The proof of ``large scale" quantitative absolute continuity proceeds similarly; we just need to
	use a larger buffer region. 

		\begin{proof}[Proof of Theorem \ref{t:ainfinitylarge}]
	The setting is the same as above, but this time $A = ar^{2-n} \sigma(B(x_0,r))$ is larger than $1$.
We still want to apply Theorem \ref{th:robinharnack} to the balls 
$B(x, r_0)$, $x \in \d\Omega \cap B(x_0,r)$, and again we want to choose $r_0 = \lambda r$
so small that $ar_0^{2-n} \sigma(B(x,r)) \leq c_0$. As in \eqref{6a4}, 
\begin{equation} \label{6a5}
\begin{split}
a r_0^{2-n} &\sigma(B(x,r_0)) \leq a  r_0^{2-n} c_d^{-1} \lambda^d \sigma(B(x,r)) 
= c_d^{-1} a \lambda^{d-2} r^{2-n} \sigma(B(x,r)) \\
&\leq c_d^{-1} a \lambda^{d-2} r^{2-n} \sigma(B(x_0,2r)) 
\leq C \lambda^{d-2} a r^{2-n} \sigma(B(x_0,r)) = C A \lambda^{d-2} 
\end{split}
\end{equation}		
and we get less than $c_0$ if we choose $\lambda = c A^{\frac{1}{d-2}}$, $c$ small enough.
And then, by Theorem \ref{th:robinharnack},  \eqref{6a3} holds for each ball 
$B(x, r_0)$, $x\in \d\Omega \cap B(x_0,r)$. In addition, if $\xi_x$ denotes a corkscrew point
for $B(x, r_0)$ and $\xi$ a corkscrew point for $B(x_0, r)$, we can connect $\xi_x$ to $\xi$
by a Harnack chain of length at most $L = C \log(r/r_0) + C\leq \frac{C}{d-2} \log(A) + C$,
and repeated uses of Harnack's inequality yield $C^{-L} u(\xi) \leq u(\xi_x) \leq C^L$
for $x\in \d\Omega \cap B(x_0,r)$. Since \eqref{6a3} also gives that
$C^{-1} u(\xi_x) \leq u(x) \leq C u(x_\xi)$, we see that
$|\log(u(x)/u(\xi))| \leq C L + C \leq \frac{C}{d-2} \log(A) + C$, and now
\eqref{e:Ainfinitylarge}, with $\gamma \leq \frac{C}{d-2}$, follows from \eqref{e:consofrep} as before.
\end{proof}

	\end{document}